\definecolor{ultramarine}{RGB}{0,32,96}
\colorlet{mygreen}{green!20!gray}
\colorlet{myultramarine}{ultramarine!20!gray}
\numberwithin{equation}{section}
\numberwithin{table}{section}
\DeclareFontFamily{U}{BOONDOX-calo}{\skewchar\font=45 }
\DeclareFontShape{U}{BOONDOX-calo}{m}{n}{
   <-> s*[1.05] BOONDOX-r-calo}{}
\DeclareFontShape{U}{BOONDOX-calo}{b}{n}{
   <-> s*[1.05] BOONDOX-b-calo}{}
\DeclareMathAlphabet{\mathcalboondox}{U}{BOONDOX-calo}{m}{n}
\SetMathAlphabet{\mathcalboondox}{bold}{U}{BOONDOX-calo}{b}{n}
\DeclareMathAlphabet{\mathbcalboondox}{U}{BOONDOX-calo}{b}{n}
\newtheorem{thm}{Theorem}[section]
\newtheorem{adef}[thm]{Definition}
\newtheorem{rk}[thm]{Remark}
\newtheorem{lem}[thm]{Lemma}
\newtheorem{prop}[thm]{Proposition}
\newtheorem{cor}[thm]{Corollary}
\newtheorem{fact}[thm]{Fact}
\newtheorem*{prop*}{Proposition}
\newcommand\FK{{\operatorname{FK}}}
\newcommand\id{{\operatorname{id}}}
\newcommand\Hom{{\operatorname{Hom}}}
\newcommand\Homm{{\mathcal{H}om}}
\newcommand\HH{{\operatorname{HH}}}
\newcommand\Z{{\mathcal{Z}}}
\newcommand\T{{\mathbb{T}}}
\newcommand\NN{{\mathbb{N}}}
\newcommand\RR{{\mathbb{R}}}
\newcommand\ZZ{{\mathbb{Z}}}
\title{\texorpdfstring{Gerstenhaber structure on Hochschild cohomology of the Fomin-Kirillov algebra on $3$ generators}{Gerstenhaber structure on Hochschild cohomology of the Fomin-Kirillov algebra on 3 generators}}
\author{Estanislao Herscovich and Ziling Li}
\date{}
\begin{document}

\maketitle

\begin{abstract}
The goal of this article is to compute the Gerstenhaber bracket of the Hochschild cohomology of the Fomin-Kirillov algebra on three generators over a field of characteristic different from $2$ and $3$. 
This is in part based on a general method we introduce to easily compute the Gerstenhaber bracket between elements of $\HH^{0}(A)$ and elements of $\HH^{n}(A)$ for $n \in \NN_{0}$, the method by M. Suárez-Álvarez in \cite{Mariano} to calculate the Gerstenhaber bracket between elements of $\HH^{1}(A)$ and elements of $\HH^{n}(A)$ for any $n \in \NN_{0}$, as well as an elementary result that allows to compute the remaining brackets from the previous ones. 
We also show that the Gerstenhaber bracket of $\HH^{\bullet}(A)$ is not induced by any Batalin-Vilkovisky generator. 
\end{abstract}

\textbf{AMS Mathematics subject classification 2020:} 16E40, 18G10, 16S37. 

\textbf{Keywords:} Fomin-Kirillov algebra, Hochschild cohomology, Gerstenhaber bracket


\section{\texorpdfstring{Introduction}{Introduction}}
\label{Introduction}

The goal of this article is to completely compute the Gerstenhaber bracket of the Hochschild cohomology $\HH^{\bullet}(A)$ of the Fomin-Kirillov algebra $A$ on three generators over a field $\Bbbk$ of characteristic different from $2$ and $3$. 
In the paper \cite{es zl} we have computed the algebra structure on $\HH^{\bullet}(A)$, and we showed that it is a finitely generated graded algebra with a minimal set $\{ X_{i} | i \in \llbracket 1 , 14 \rrbracket \}$ of $14$ generators, three of which are in $\HH^{0}(A)$, five are in $\HH^{1}(A)$, four are in $\HH^{2}(A)$, one in $\HH^{3}(A)$ and one in $\HH^{4}(A)$. 
In this article we compute the Gerstenhaber bracket between these generators, which fully determines the Gerstenhaber bracket on $\HH^{\bullet}(A)$. 

Our calculations are organised as follows. 
First we provide a general method of homological flavour to easily compute the Gerstenhaber bracket between elements of $\HH^{0}(A)$ and elements of $\HH^{n}(A)$ for any $n \in \NN_{0}$ and any algebra $A$ over a field $\Bbbk$ (see Theorem \ref{HH0}). 
We then use this method for the particular case of the Fomin-Kirillov algebra $A$ on three generators over a field of characteristic different from $2$ and $3$ (see Proposition \ref{prop:bracket-H0-Xi}). 
Secondly, for $A$ as before, we compute the Gerstenhaber bracket between elements of $\HH^{1}(A)$ and elements of $\HH^{n}(A)$ for any $n \in \NN_{0}$ using the method introduced by M. Suárez-Álvarez in \cite{Mariano} (see Propositions \ref{prop:X_8} and \ref{prop:X4}). 
Finally, we present a simple result that allows us to compute the remaining Gerstenhaber brackets under some assumptions on the algebra structure of the Hochschild cohomology of an algebra (see Lemma \ref{lemma:tech}), which are verified in the case of the Fomin-Kirillov algebra $A$ on three generators over a field of characteristic different from $2$ and $3$ (see Proposition \ref{prop:remaining brackets}). 
We summarize our results in Table \ref{table:Gerstenhaber brackets}. 
By using the explicit expression of the Gerstenhaber bracket of $\HH^{\bullet}(A)$ together with a direct computation, we show that the former is not induced by any Batalin-Vilkovisky generator (see Proposition \ref{prop:no generator}). 

The article is organised as follows. 
In the first subsection of Section \ref{section:Gerstenhaber brackets} we recall some general facts about the Gerstenhaber bracket on Hochschild cohomology. 
In Subsection \ref{subsection:Gerstenhaber brackets of HH^0} we present our general method to compute the Gerstenhaber bracket between elements of $\HH^{0}(A)$ and elements of $\HH^{n}(A)$ for any $n \in \NN_{0}$, whereas in Subsection \ref{subsection:Gerstenhaber brackets of HH^1} we recall the method of \cite{Mariano} to compute the Gerstenhaber bracket between elements of $\HH^{1}(A)$ and elements of $\HH^{n}(A)$ for any $n \in \NN_{0}$. 
In Section \ref{sec:basics} we recall the definition of the Fomin-Kirillov algebra on three generators, and we summarize 
some results of \cite{es zl} about its Hochschild cohomology, in particular those we are going to use in the sequel.
Finally, in Section \ref{section:Gerstenhaber brackets on Hochschild cohomology} we compute the Gerstenhaber brackets of the generators of the algebra structure of the Hochschild cohomology of the Fomin-Kirillov algebra $A$ on three generators, following the path described in the previous paragraph.  

In the whole article, 
we will denote by $\NN$ (resp., $\NN_{0}$) the set of positive (resp., nonnegative) integers, and $\ZZ$ the set of integers.  
Moreover, given $i, j \in \ZZ$ we define the integer interval $\llbracket i , j \rrbracket = \{ n \in \ZZ | i \leqslant n \leqslant j \}$. 
To reduce space in the expressions of the article we will typically denote the composition $f\circ g$ of maps $f$ and $g$ simply by their juxtaposition $f g$. 
For a field $\Bbbk$, all maps between $\Bbbk$-vector spaces will be $\Bbbk$-linear and all unadorned tensor products $\otimes$ 
will be over $\Bbbk$. 

\section{\texorpdfstring{The Gerstenhaber bracket on Hochschild cohomology}{The Gerstenhaber bracket on Hochschild cohomology}}
\label{section:Gerstenhaber brackets} 

All along this section we will consider $\Bbbk$ to be a field and $A$ to be a (unital associative) $\Bbbk$-algebra. 

\subsection{Generalities on the Gerstenhaber bracket}
\label{subsection:Generalities Gerstenhaber brackets}

In this subsection we recall several basic definitions and results concerning the Gerstenhaber bracket, that we will utilize in the sequel. 

Recall that the \textbf{\textcolor{ultramarine}{bar resolution}} $(B_{\bullet}(A), d_{\bullet})$ of $A$ is given by 
$B_n(A)=A^{\otimes (n+2)}$ for $n\in \NN_0$, with the differentials $d_n: B_{n}(A) \to B_{n-1}(A)$ given by 
\[ d_n(a_0| \dots |a_{n+1}) =\sum_{j=0}^n (-1)^{j}a_0|\dots |a_{j-1}|a_ja_{j+1}|a_{j+2}|\dots|a_{n+1}  \] 
for $a_0,\dots,a_{n+1}\in A$ and $n\in\NN$, and the augmentation $\pi: B_0(A)=A\otimes A\to A$ defined by the multiplication of $A$. 
We will typically write $a_0 | \dots |a_{n+1}$ instead of $a_0 \otimes \dots \otimes a_{n+1}$ for simplicity.
There is an isomorphism 
\begin{equation}
\label{eq:F-bar}
     F : \Hom_{A^e}\big(B_n(A), A\big)\longrightarrow \Hom_{\Bbbk}(A^{\otimes n},A)
\end{equation}
given by $F(f)(a_1|\dots|a_n)=f(1|a_1|\dots|a_n|1)$ for $f\in \Hom_{A^e}(B_n(A), A)$ and $a_1,\dots,a_n\in A$.
The inverse map 
\begin{equation}
\label{eq:G-bar}
     G : \Hom_{\Bbbk}(A^{\otimes n},A) \longrightarrow  
     \Hom_{A^e}\big(B_n(A), A\big)
\end{equation}
of $F$ is explicitly given by $G(g)(a_0|\dots|a_{n+1})=a_0g(a_1|\dots|a_n)a_{n+1}$ for $g\in \Hom_{\Bbbk}(A^{\otimes n},A)$ and $a_0,\dots,a_{n+1}\in A$. 

The following definition is classical (see for instance \cite{Sarah}, Def. 1.4.1).
\begin{adef}
\label{Gerstenhaber bracket at chain level}
Let $m,n\in\NN_0$, $f\in\Hom_{\Bbbk}(A^{\otimes m},A)$ and $g\in \Hom_{\Bbbk}(A^{\otimes n},A)$. 
The {\color{ultramarine}{\textbf{Gerstenhaber bracket}}} $[f,g]$ is defined at the chain level as the element of $\Hom_{\Bbbk}(A^{\otimes (m+n-1)},A)$ given by 
\[  [f,g]=f\circ_{G} g-(-1)^{(m-1)(n-1)}g\circ_{G} f , \]
where $f\circ_{G} g$ is defined by 
\[ (f\circ_{G} g)(a_1|\dots |a_{m+n-1})=\sum_{i=1}^{m} (-1)^{(n-1)(i-1)}f\big(a_1|\dots |a_{i-1}|g(a_i|\dots|a_{i+n-1})|a_{i+n}|\dots|a_{m+n-1}\big). \]
Moreover, if $m=0$, then $f\circ_{G} g=0$, while if $n=0$, then the formula should be interpreted by taking the value $g(1)$ in place of $g(a_i|\dots|a_{i+n-1})$.
\end{adef}

Using the isomorphisms $F$ and $G$ of chain complexes given above, one defines the {\color{ultramarine}{\textbf{Gerstenhaber bracket}}}
in $\Hom_{A^e}(B_{\bullet}(A),A)$ by $[f,g]=G([F(f),F(g)])\in \Hom_{A^e}(B_{m+n-1}(A),A)$ for $f\in \Hom_{A^e}(B_{m}(A),A)$, $g\in \Hom_{A^e}(B_n(A),A)$ and $m,n\in \NN_0$. 
The Gerstenhaber bracket given before induces a well-defined bilinear map  
\begin{equation}
\label{eq:gerstenhaber}
    [ \hskip 0.6mm , ] : \operatorname{H}^{m}\big(\Hom_{A^e}(B_{\bullet}(A),A)\big) \times \operatorname{H}^{n}\big(\Hom_{A^e}(B_{\bullet}(A),A)\big) \rightarrow \operatorname{H}^{m+n-1}\big(\Hom_{A^e}(B_{\bullet}(A),A)\big)
\end{equation}
for all $m, n \in \NN_{0}$, that we also call the 
{\color{ultramarine}{\textbf{Gerstenhaber bracket}}}. 

More generally, let $(P_{\bullet},\partial_{\bullet})$ be a projective bimodule resolution over $A$ with augmentation $\mu:P_0\to A$.
Let $i_{\bullet}:P_{\bullet}\to B_{\bullet}(A)$ and $p_{\bullet}:B_{\bullet}(A)\to P_{\bullet}$ be morphisms of complexes of $A$-bimodules lifting $\id_A$, so $p_{\bullet} i_{\bullet}$ is homotopic to $\id_{P_{\bullet}}$ and $i_{\bullet} p_{\bullet}$ is homotopic to $\id_{B_{\bullet}(A)}$. 
We also recall that the morphisms $i_{\bullet}$ and $p_{\bullet}$ induce the quasi-isomorphisms
$i_{\bullet}^{*}:\Hom_{A^e}(B_{\bullet}(A),A)\to \Hom_{A^e}(P_{\bullet},A) $ 
and 
$p^{*}_{\bullet}:\Hom_{A^e}(P_{\bullet},A)\to \Hom_{A^e}(B_{\bullet}(A),A)$ given by $i_{\bullet}^{*}(f) = f i_{\bullet}$ and 
$p_{\bullet}^{*}(g) = g p_{\bullet}$ for $f\in \Hom_{A^e}(B_{\bullet}(A),A)$ and $g\in \Hom_{A^e}(P_{\bullet},A)$, respectively.
Moreover, $\operatorname{H}(i^{*}_{\bullet}),\operatorname{H}(p^{*}_{\bullet}):\HH^{\bullet}(A)\to \HH^{\bullet}(A) $ are independent of the choice of $i_{\bullet}$ and $p_{\bullet}$. 
The {\color{ultramarine}{\textbf{Gerstenhaber bracket}}} 
\begin{equation}
\label{eq:gerstenhaber-2}
    [\hskip 0.6mm , ] : \operatorname{H}^{m}\big(\Hom_{A^e}(P_{\bullet}(A),A)\big) \times \operatorname{H}^{n}\big(\Hom_{A^e}(P_{\bullet}(A),A)\big) \rightarrow \operatorname{H}^{m+n-1}\big(\Hom_{A^e}(P_{\bullet}(A),A)\big)
\end{equation}
for all $m, n \in \NN_{0}$ is then defined by transport of structures. 
More generally, given cocycles $f \in \Hom_{A^e}(P_m,A)$ and $g\in \Hom_{A^e}(P_n,A)$, we define the Gerstenhaber bracket $[f, g] \in \operatorname{H}^{m+n-1}(\Hom_{A^e}(P_{\bullet},A))\cong \HH^{m+n-1}(A)$ as the cohomology class of $i^{*}_{\bullet}([p^{*}_{\bullet}(f), p^{*}_{\bullet} (g)])$.

The following properties of the Gerstenhaber bracket are classical (see for instance \cite{MR161898}, equation (2), \textit{cf.} \cite{Sarah}, Lemmas 1.4.3 and 1.4.7).
\begin{lem}
\label{property of G}
Let $\Bbbk$ be a field and $A$ a $\Bbbk$-algebra.  
Then 
\begin{equation}
    \label{property 1}
\begin{split}
[x,y]=-(-1)^{(m-1)(n-1)}[y,x] \text{ and } [x, [y,z]] = [[x,y],z] + (-1)^{(m-1)(n-1)} [y,[x,z]],
\end{split}
\end{equation}
and
\begin{equation}
    \label{property 2}
    \begin{split}
    [x\cup y,z]= [x,z]\cup y+ (-1)^{m(p-1)} x\cup [y,z]
    \end{split}
\end{equation}
for all $x\in \HH^m(A)$, $y\in \HH^n(A)$ and $z\in \HH^p(A)$.
\end{lem}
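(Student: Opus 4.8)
The plan is to verify all three identities at the level of cochains on the bar complex $\Hom_{\Bbbk}(A^{\otimes \bullet}, A)$, where the bracket, the composition $\circ_{G}$ and the cup product (defined by $(f\cup g)(a_1|\dots|a_{m+n}) = f(a_1|\dots|a_m)\,g(a_{m+1}|\dots|a_{m+n})$) all admit explicit formulas, and then to transport the results to $\HH^{\bullet}(A)$ via the quasi-isomorphism $F$ of \eqref{eq:F-bar}. Throughout I write $\mu \in \Hom_{\Bbbk}(A^{\otimes 2}, A)$ for the multiplication cochain and $\partial$ for the Hochschild coboundary, and I record the basic fact that $\partial f = \pm[\mu, f]$ (with a sign depending on the degree of $f$), so that cocycles are exactly the cochains commuting with $\mu$ up to sign. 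The antisymmetry in \eqref{property 1} already holds at the chain level and requires no homotopy: substituting the definition of $[g,f]$ and using $(-1)^{(m-1)(n-1)} = (-1)^{(n-1)(m-1)}$ together with $(-1)^{2(m-1)(n-1)} = 1$, one reads off $[g,f] = -(-1)^{(m-1)(n-1)}[f,g]$ directly from the defining formula.

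For the graded Jacobi identity, the key step is to establish the graded right pre-Lie (right-symmetric) relation for the composition product, namely that the associator
\[ (f\circ_{G} g)\circ_{G} h - f\circ_{G}(g\circ_{G} h) \]
is graded symmetric under the interchange of $g$ and $h$, with sign $(-1)^{(n-1)(p-1)}$, for cochains $f,g,h$ of degrees $m,n,p$. This is proved by expanding both sides: the terms in which the insertions of $g$ and $h$ into $f$ are \emph{nested} cancel against $f\circ_{G}(g\circ_{G} h)$, while the terms in which the two insertions are \emph{disjoint} are precisely the ones symmetric in $g$ and $h$, the only real effort being the bookkeeping of insertion indices and signs. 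The graded Jacobi identity in \eqref{property 1} then follows from the pre-Lie relation by the standard purely formal computation (every graded right pre-Lie algebra is Lie-admissible), so it too holds already at the chain level.

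The genuinely delicate identity is the Poisson rule \eqref{property 2}, which in general fails on cochains and holds only after passing to cohomology. Writing out $[f\cup g, h]$, the summand $(f\cup g)\circ_{G} h$ splits cleanly along the concatenation into $(f\circ_{G} h)\cup g$ and $f\cup(g\circ_{G} h)$, matching the terms $[f,h]\cup g$ and $(-1)^{m(p-1)} f\cup[g,h]$ on the right-hand side; the obstruction lies entirely in the summand $h\circ_{G}(f\cup g)$, where the block $f\cup g$ is inserted into $h$ as a single chunk and therefore does not respect the cup decomposition. The strategy is to measure this failure by the quadratic brace $h\{f,g\}$ and to invoke the homotopy $G$-algebra relation of Gerstenhaber–Voronov, which expresses
\[ h\circ_{G}(f\cup g) - (h\circ_{G} f)\cup g - (-1)^{\ast} f\cup(h\circ_{G} g) \]
as $\partial\big(h\{f,g\}\big)$ plus terms containing $\partial f$, $\partial g$ or $\partial h$. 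Taking $f,g,h$ to be cocycles kills the latter terms and leaves the deviation $\Delta(f,g,h) = [f\cup g,h]-[f,h]\cup g-(-1)^{m(p-1)}f\cup[g,h]$ equal to an explicit coboundary, whence its class in $\HH^{\bullet}(A)$ vanishes, giving \eqref{property 2}. I expect this final step to be the main obstacle: identifying the correct homotopy (equivalently, pinning down the brace operation and the associated homotopy $G$-algebra identity) and checking all signs is where the real work concentrates, in contrast to the antisymmetry and Jacobi identities, which are formal consequences of the chain-level formulas.
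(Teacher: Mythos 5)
Your proposal is correct and follows essentially the same route as the paper, which gives no proof of its own but delegates the lemma to the classical references \cite{MR161898} and \cite{Sarah}, whose arguments are exactly the ones you outline: antisymmetry and the graded Jacobi identity hold already at the chain level, the latter via the graded right pre-Lie identity for $\circ_{G}$, while the Leibniz rule \eqref{property 2} holds only on cohomology, its chain-level deviation being the coboundary of the brace $h\{f,g\}$ once $f$, $g$, $h$ are cocycles (and your chain-level splitting $(f\cup g)\circ_{G}h=(f\circ_{G}h)\cup g+(-1)^{m(p-1)}f\cup(g\circ_{G}h)$ is indeed exact, so the obstruction sits entirely in $h\circ_{G}(f\cup g)$ as you say). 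No gap to report.
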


The previous result is typically rephrased by stating that the Hochschild cohomology is a \textbf{\textcolor{ultramarine}{Gerstenhaber algebra}}, \textit{i.e.} a graded-commutative algebra $H = \oplus_{n \in \NN_{0}} H^{n}$ endowed with a bracket $[\hskip 0.6mm , ] : H \otimes H \rightarrow H$ satisfying $[H^{m},H^{n}] \subseteq H^{m+n-1}$ for $ m,n \in \NN_{0}$, \eqref{property 1} and \eqref{property 2}.

Assume for the rest of this subsection that $A$ is  \textbf{\textcolor{ultramarine}{graded}}, \textit{i.e.} there exist $\Bbbk$-vector subspaces $\{A_n\}_{n\in \ZZ }$ of $A$ such that $A=\oplus_{n\in \ZZ }A_n$, 
and $A_m\cdot A_n\subseteq A_{m+n}$ for all $m,n\in \ZZ$.
Recall that a left $A$-module $M$ is called \textbf{\textcolor{ultramarine}{graded}} if there are $\Bbbk$-vector subspaces $\{M_n\}_{n\in \ZZ }$ of $M$ such that $M=\oplus_{n\in \ZZ }M_n$ and $A_m\cdot M_n\subseteq M_{m+n}$ for all $m, n\in \ZZ $.
Given two graded left $A$-modules $M$ and $N$, a morphism $f:M\to N$ of left $A$-modules is called 
\textbf{\textcolor{ultramarine}{homogeneous}} of degree $d\in \ZZ$ if $f(M_n)\subseteq N_{n+d}$ for all $n\in \ZZ$.
Let $\Hom_A(M,N)$ be the $\Bbbk$-vector space consisting of all morphisms of left $A$-modules from $M$ to $N$.
Let 
\begin{equation}
\label{eq:Homm}
    \begin{split}
        \Homm_A(M,N)=\bigoplus_{d\in \ZZ} \Homm_A(M,N)_d,
    \end{split}
\end{equation}
be the graded $\Bbbk$-vector space, where $\Homm_A(M,N)_d$ is the subspace of $\Hom_A(M,N)$ consisting of all homogeneous morphisms of degree $d$. 


The following result is classical (see \cite{MR2046303}, Cor. 2.4.4). 
\begin{lem}
\label{lem:gr M N}
If $M$ is a finitely generated graded module over a graded algebra $A$, then $\Hom_A(M,N)=\Homm_A(M,N)$. 
\end{lem}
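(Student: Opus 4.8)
The plan is to prove only the nontrivial inclusion $\Hom_A(M,N)\subseteq \Homm_A(M,N)$, since $\Homm_A(M,N)\subseteq\Hom_A(M,N)$ is immediate from the fact that $\Homm_A(M,N)$ is defined as a subspace of $\Hom_A(M,N)$. Concretely, I must show that every $A$-linear map $f\colon M\to N$ is a \emph{finite} sum of homogeneous morphisms, so that it lies in the direct sum $\oplus_{d\in\ZZ}\Homm_A(M,N)_d$. First I would fix homogeneous generators: since $M$ is graded and finitely generated, any finite generating set can be replaced by the (finitely many) homogeneous components of its elements, so I may choose homogeneous generators $m_1,\dots,m_k$ with $m_i\in M_{e_i}$ for suitable $e_i\in\ZZ$.

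Next, for each $d\in\ZZ$ I would construct a candidate homogeneous component $f_d$ of degree $d$. On a homogeneous element $m\in M_n$, set $f_d(m)=f(m)_{n+d}$, the degree-$(n+d)$ component of $f(m)$ under the decomposition $N=\oplus_{q\in\ZZ}N_q$, and extend $\Bbbk$-linearly using $M=\oplus_{n\in\ZZ}M_n$. This is well-defined as a $\Bbbk$-linear map because the homogeneous decomposition of each element of $M$ is unique, and $f_d$ is homogeneous of degree $d$ by construction. To check $A$-linearity it suffices, by bilinearity, to treat homogeneous $a\in A_p$ and $m\in M_n$: since $am\in M_{n+p}$, one has $f_d(am)=f(am)_{n+p+d}=(af(m))_{n+p+d}$ using $f(am)=af(m)$, and writing $f(m)=\sum_q f(m)_q$ with $a\cdot f(m)_q\in N_{p+q}$, the degree-$(n+p+d)$ component of $af(m)$ is exactly $a\,f(m)_{n+d}=a\,f_d(m)$. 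Hence $f_d\in\Homm_A(M,N)_d$.

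The only genuine obstacle, and the place where finite generation is indispensable, will be showing that $f=\sum_{d}f_d$ is a \emph{finite} sum rather than a merely pointwise-convergent infinite one. Here I would use that each $f(m_i)$ has only finitely many nonzero homogeneous components, so the set $D=\{\,q-e_i : 1\le i\le k,\ f(m_i)_q\neq 0\,\}$ is finite. For $d\notin D$ we have $f_d(m_i)=f(m_i)_{e_i+d}=0$ for every generator $m_i$, and since $f_d$ is $A$-linear it then vanishes on all of $M$, so $f_d=0$. Finally, evaluating $\sum_{d\in D}f_d$ on a homogeneous $m\in M_n$ recovers $\sum_q f(m)_q=f(m)$, which gives $f=\sum_{d\in D}f_d\in\Homm_A(M,N)$ and completes the proof. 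The well-definedness and $A$-linearity of the $f_d$ are routine; the substantive content is precisely this finiteness, which would fail without the finite-generation hypothesis.
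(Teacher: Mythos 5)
Your proposal is correct, and it coincides with the standard proof of this classical fact: the paper gives no proof of its own, simply citing \cite{MR2046303}, Cor.~2.4.4, and your argument (decomposing an arbitrary $A$-linear $f$ into homogeneous components $f_d$, checking $A$-linearity of each $f_d$ on homogeneous elements, and using a finite set of homogeneous generators to force $f_d=0$ for all but finitely many $d$) is precisely the argument of that reference. In particular you correctly isolate where finite generation enters, namely in guaranteeing that the sum $f=\sum_d f_d$ is finite.
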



\begin{cor}
\label{cor:gr Hom}
Let $(P_{\bullet},\partial_{\bullet})$ 
\[     
\cdots \overset{\partial_{3}}{\longrightarrow} P_2
\overset{\partial_{2}}{\longrightarrow} P_1
\overset{\partial_{1}}{\longrightarrow} P_0 
\overset{\mu}{\longrightarrow}  A
\longrightarrow 0     
\] 
be a projective bimodule resolution of a graded $\Bbbk$-algebra $A$, where $P_i$ is finitely generated as left $A^e$-module for $i\in \NN_0$, and $\mu$ and $\partial_i$ are homogeneous of degree $0$ for $i\in\NN$.
Then $\Hom_{A^e}(P_i,A)=\Homm_{A^e}(P_i,A)$ for $i\in \NN_0$. 
Hence, 
the Hochschild cohomology $\HH^{\bullet}(A)\cong \oplus_{i\in\NN_0} \operatorname{H}^i(\Hom_{A^e}(P_{\bullet},A))$ of $A$ is a bigraded algebra, for the cohomological degree $i$ and the internal degree induced by that of $A$ and $P_{\bullet}$. 
Moreover, the cup product and the Gerstenhaber bracket on $\HH^{\bullet}(A)$ preserve the internal degree.
\end{cor}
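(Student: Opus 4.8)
The plan is to deduce the three assertions in order, reducing everything to Lemma \ref{lem:gr M N} for the first and to the explicit chain-level formulas on the bar resolution for the last. First I would establish the identity $\Hom_{A^e}(P_i,A)=\Homm_{A^e}(P_i,A)$. Since $A$ is graded, the enveloping algebra $A^e=A\otimes A^{\mathrm{op}}$ is graded via $(A^e)_n=\oplus_{p+q=n}A_p\otimes A_q$, and $A$ itself is a graded left $A^e$-module. By hypothesis each $P_i$ is finitely generated as a left $A^e$-module and, being a term of a resolution whose differentials are homogeneous of degree $0$, carries a compatible grading. Applying Lemma \ref{lem:gr M N} with $A^e$ in place of $A$, with $P_i$ in place of $M$, and with $A$ in place of $N$, yields the claimed equality for every $i\in\NN_0$.

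Next I would produce the bigrading on $\HH^{\bullet}(A)$. Because each $\partial_i$ is homogeneous of degree $0$, precomposition $\partial_i^{*}$ sends a homogeneous cochain of internal degree $d$ to one of internal degree $d$; hence the cochain complex $\Hom_{A^e}(P_{\bullet},A)=\oplus_{d\in\ZZ}\Homm_{A^e}(P_{\bullet},A)_d$ is a direct sum of subcomplexes indexed by the internal degree, and its cohomology inherits the decomposition $\operatorname{H}^i(\Hom_{A^e}(P_{\bullet},A))=\oplus_{d\in\ZZ}\operatorname{H}^i(\Homm_{A^e}(P_{\bullet},A)_d)$. Together with the cohomological degree $i$, this is exactly the asserted bigrading.

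Finally, to see that the cup product and the Gerstenhaber bracket respect the internal degree, I would argue by transport from the bar resolution, where the chain-level formulas make the claim transparent. The bar complex $B_{\bullet}(A)$ is itself graded by total internal degree, $(B_n(A))_d=\oplus_{i_0+\dots+i_{n+1}=d}A_{i_0}\otimes\dots\otimes A_{i_{n+1}}$, with the differentials $d_{\bullet}$ homogeneous of degree $0$ since the multiplication of $A$ is, and the isomorphisms $F$ and $G$ of \eqref{eq:F-bar} and \eqref{eq:G-bar} preserve the internal degree because $1\in A_0$. On $\Hom_{\Bbbk}(A^{\otimes\bullet},A)$ both the chain-level cup product $(f\cup g)(a_1|\dots|a_{m+n})=f(a_1|\dots|a_m)\,g(a_{m+1}|\dots|a_{m+n})$ and the operations $\circ_G$ of Definition \ref{Gerstenhaber bracket at chain level} visibly send a pair of homogeneous maps of internal degrees $d$ and $e$ to a homogeneous map of internal degree $d+e$, again using homogeneity of the multiplication; hence both operations preserve internal degree at the chain level on $B_{\bullet}(A)$. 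To pass this to $P_{\bullet}$ I would invoke the graded comparison theorem to choose the lifts $i_{\bullet}$ and $p_{\bullet}$ of $\id_A$ to be homogeneous of degree $0$, which is possible since $\id_A$ is homogeneous and both resolutions consist of graded projectives with degree-$0$ differentials; then $i_{\bullet}^{*}$ and $p_{\bullet}^{*}$ preserve internal degree, and since the cup product and the Gerstenhaber bracket on $\HH^{\bullet}(A)$ are defined by transport through these maps, they preserve the internal degree as well.

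The main obstacle is this last step: one must ensure that the internal grading on $\HH^{\bullet}(A)$ arising from $P_{\bullet}$ is compatible with the chain-level structures on the bar resolution, which forces the comparison maps to be chosen homogeneous. Verifying that such homogeneous lifts exist—running the usual inductive lifting argument inside the graded category and checking that each lifted component can be taken of degree $0$—is the one place where the grading hypotheses on $P_{\bullet}$ are genuinely used; all remaining verifications are immediate from the explicit formulas.
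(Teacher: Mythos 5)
Your proof is correct and takes essentially the same route the paper intends: the corollary is stated there without proof as an immediate consequence of Lemma \ref{lem:gr M N} applied to $A^e$ (with $P_i$ as the finitely generated graded module and $A$ as the target), the bigrading coming from the decomposition of the complex into internal-degree subcomplexes, and degree preservation of the cup product and Gerstenhaber bracket obtained exactly as you argue, by transporting the manifestly degree-additive chain-level operations on the bar resolution through comparison maps $i_{\bullet}$, $p_{\bullet}$ chosen homogeneous of degree $0$. Your closing remark correctly isolates the one tacit point, namely that homogeneous lifts exist because each $P_i$ (graded and projective, hence graded-projective, as a degree-$0$ splitting can be extracted by taking homogeneous components) and each $B_n(A)$ admit the graded comparison theorem.
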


\begin{rk}
The existence of  a projective bimodule resolution of the graded $\Bbbk$-algebra $A$ satisfying the conditions of the previous corollary clearly holds if the graded $\Bbbk$-algebra $A^{e}$ is noetherian (\textit{e.g.} if $A$ is finite dimensional over $\Bbbk$). 
\end{rk}

\subsection{\texorpdfstring{Method computing the bracket between $\HH^{0}(A)$ and $\HH^{n}(A)$}{Method computing the bracket between HH0(A) and HHn(A)}}
\label{subsection:Gerstenhaber brackets of HH^0} 

In this subsection we introduce an elementary method to compute the Gerstenhaber bracket between the cohomology groups $\HH^0(A)$ and $\HH^n(A)$ for $n \in \NN_{0}$ of any algebra $A$ using any projective bimodule resolution of $A$ (see Thm. \ref{HH0}). 
We were unable to explicitly find this method in the existing literature (see Remark \ref{rk:comparison to literature}), although we suspect it could be well known to the experts. 

Let $\rho$ be an element of the center $\Z(A)\cong \HH^0(A)$ of $A$ and $\ell_{\rho} \in \Hom_{A^e}(B_0(A),A)$ be the morphism defined by $\ell_{\rho}(1|1)=\rho$. 
Let $(P_{\bullet},\partial_{\bullet})$ be a projective bimodule resolution over $A$ with augmentation $\mu:P_0\to A$, and let 
$i_{0} : P_{0}\to B_{0}(A)$ be the $0$-th component of a morphism
$i_{\bullet} : P_{\bullet}\to B_{\bullet}(A)$ of complexes of $A$-bimodules lifting $\id_A$. 
The main aim of this subsection is to prove the following theorem, that tells us that we can compute the Gerstenhaber bracket between $\HH^{0}(A)$ and $
\HH^n(A)$ for $n\in \NN_0$ using a simple homological procedure on any projective bimodule resolution of $A$.
\begin{thm}
\label{HH0} 
Consider the same assumptions as in the previous paragraph. 
Let $\eta_n:P_n\to P_n$ be the map given by $\eta_n(v)=\rho v - v \rho$ for $v\in P_n$ and $n\in \NN_0$. 
Since $\eta_{\bullet}=\{ \eta_n :P_n\to P_n\}_{n\in\NN_0}$ is a lifting of the zero morphism from $A$ to itself, $\eta_{\bullet}$ is null-homotopic, \textit{i.e.} there is a family $h^{\rho}_{\bullet}=\{ h^{\rho}_n :P_n\to P_{n+1}\}_{n\in\NN_0}$ of morphisms of $A$-bimodules such that 
\begin{equation}
    \label{eq:eta}
\begin{split}
    \eta_0=\partial_1h^{\rho}_0
    \text{ $\phantom{x}$ and $\phantom{x}$ } 
    \eta_n=h^{\rho}_{n-1}\partial_n+\partial_{n+1}h^{\rho}_n
\end{split}
\end{equation}
for $n\in\NN$.
Then, if $\phi \in \Hom_{A^e}(P_n,A)$ is a cocycle for some $n\in\NN_0$, the Gerstenhaber bracket $[\phi, \ell_{\rho} i_{0}] \in \HH^{n-1}(A)$ is given by the cohomology class of $\phi h^{\rho}_{n-1}$. 
\end{thm}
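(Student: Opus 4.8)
The plan is to compute the Gerstenhaber bracket $[\phi, \ell_\rho i_0]$ directly from the chain-level definition, transported to the resolution $P_\bullet$ via the comparison maps $i_\bullet$ and $p_\bullet$. Since $\ell_\rho i_0 \in \Hom_{A^e}(P_0, A)$ is a degree-$0$ cochain (i.e. $m = 0$ in the notation of Definition \ref{Gerstenhaber bracket at chain level}), the term $(\ell_\rho i_0) \circ_G \phi$ vanishes, and the bracket reduces to $-(-1)^{(n-1)(0-1)}\phi \circ_G (\ell_\rho i_0)$ up to a sign, where the $\circ_G$ is computed on the bar resolution after applying $p_\bullet^*$. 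So the first step is to unwind the definition $[\phi, \ell_\rho i_0] = i_\bullet^*\big([p_\bullet^*(\phi), p_\bullet^*(\ell_\rho i_0)]\big)$ and identify which of the two $\circ_G$ terms survives.

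The key simplification I expect to exploit is that $\ell_\rho i_0$ is a cochain of degree $0$ attached to the central element $\rho$, so inserting $F(\ell_\rho i_0) = \ell_\rho(1|1) = \rho$ into $F(\phi) = \phi i_n$ amounts to multiplying by $\rho$ in one of the tensor slots. The heart of the argument should be to show that the resulting bar-level cochain, when pulled back through $i_\bullet$ to $P_\bullet$, agrees up to a coboundary with $\phi h^\rho_{n-1}$. The natural mechanism for this is the homotopy identity \eqref{eq:eta}: the map $\eta_n(v) = \rho v - v\rho$ measures the failure of $\rho$ to be central on the level of $P_n$ (it is null-homotopic precisely because $\rho$ acts centrally on the homology $A$), and the homotopy $h^\rho_\bullet$ is exactly the datum that converts multiplication-by-$\rho$ differences into the shift $P_{n-1} \to P_n$ that appears in $\phi h^\rho_{n-1}$. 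Concretely, I would relate the bar-level composite $\phi \circ_G (\ell_\rho i_0)$ to $\eta_\bullet$ by observing that inserting $\rho$ into the appropriate slot and summing over positions reproduces the alternating left/right multiplication encoded in $\eta$, and then substitute $\eta_n = h^\rho_{n-1}\partial_n + \partial_{n+1} h^\rho_n$.

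The main obstacle I anticipate is bookkeeping the comparison between the bar resolution and $P_\bullet$: the $\circ_G$ operation is defined on the bar complex via $F$ and $G$, so I must track how the insertion of $\rho$ interacts with the lifting $i_\bullet$ and how the summation over insertion positions collapses. Since $\phi$ is a cocycle, $\phi \partial_{n+1} = 0$, so in the expression $\phi \eta_n = \phi h^\rho_{n-1}\partial_n + \phi \partial_{n+1} h^\rho_n$ the second term drops out, leaving $\phi \eta_n = (\phi h^\rho_{n-1})\partial_n$; this is what lets me identify $\phi h^\rho_{n-1}$ as the representing cocycle of the bracket. The delicate point is verifying that the sum over insertion positions in the definition of $\circ_G$, together with the signs, really does assemble into the single expression $\eta_n(v) = \rho v - v \rho$ rather than some more complicated combination, and that the leftover terms are genuine coboundaries and hence vanish in $\HH^{n-1}(A)$.

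To carry this out cleanly, I would first establish the statement at the chain level on the bar resolution itself (where the insertion of $\rho$ and the centrality of $\rho$ can be manipulated explicitly), and only then transport the identity to the arbitrary resolution $P_\bullet$ using $i_\bullet^*$, invoking the independence of $\operatorname{H}(i_\bullet^*)$ and $\operatorname{H}(p_\bullet^*)$ from the chosen comparison maps. The cocycle condition $\phi \partial_{n+1} = 0$ and the homotopy equations \eqref{eq:eta} are the two algebraic inputs that do all the real work; everything else is a matter of matching signs and slots.
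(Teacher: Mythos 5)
Your proposal is correct and follows essentially the same route as the paper: the bracket with the degree-zero cochain survives only as $\phi\circ_{G}(\ell_{\rho}i_{0})$, which at bar level is composition with the $\rho$-insertion map $t_{n-1}$ (Lemma \ref{lem:bar-G-bracket}), $t_{\bullet}$ is a contracting homotopy for $\xi_{n}(u)=\rho u-u\rho$ (Lemma \ref{lem:bar1}), and the passage to $P_{\bullet}$ is exactly your ``leftover terms are genuine coboundaries'' step, which the paper formalizes via an auxiliary homotopy $s_{\bullet}$ satisfying $i_{n+1}h^{\rho}_{n}-t_{n}i_{n}=d_{n+2}s_{n}-s_{n-1}\partial_{n}$ (Lemma \ref{lem: exist s}), making $\phi h^{\rho}_{n-1}-\varphi t_{n-1}i_{n-1}$ a coboundary. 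Two small corrections for a full write-up: the surviving term carries no sign (it is $\ell\circ_{G}\phi$, whose left factor has degree $0$, that vanishes, not the other way around), and your identity $\phi\eta_{n}=(\phi h^{\rho}_{n-1})\partial_{n}$ only establishes that $\phi h^{\rho}_{n-1}$ is a cocycle (in fact $\phi\eta_{n}=0$ since $\phi$ is a bimodule map and $\rho$ is central) --- the identification with the bracket class comes from the $s_{\bullet}$-homotopy, with the same centrality/bimodule argument also needed to kill the term $\phi_{1}\eta_{n-1}$ in the final comparison.
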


\begin{rk}
\label{rk: projective cocycle boundary}
It is easy to see that if $\phi \in \Hom_{A^e}(P_n,A)$ is a cocycle (resp., coboundary), then $\phi h^{\rho}_{n-1}$ is a cocycle (resp., coboundary) by applying \eqref{eq:eta}. 
On the other hand, in general we have $P_{0} = B_{0} = A \otimes A$ and $i_{0} = \operatorname{id}_{A \otimes A}$, so we can forget about $i_{0}$ in Theorem \ref{HH0}.  
\end{rk}

The rest of this subsection is devoted to proving Theorem \ref{HH0}. 
In order to do that, we first need to prove some preliminary results. 

Let $t_n:B_{n}(A)\to B_{n+1}(A)$ be the morphism of $A$-bimodules given by 
\begin{equation}\label{t}
    \begin{split}
t_n(a_0|\dots|a_{n+1})=\sum_{j=0}^n(-1)^j a_0|\dots|a_j|\rho|a_{j+1}|\dots|a_{n+1}
    \end{split}
\end{equation}
for $a_0,\dots,a_{n+1}\in A$ and $n\in\NN_0$.
Let $\xi_{\bullet}=\{ \xi_n :B_n(A)\to B_n(A)\}_{n\in\NN_0}$ be the family of morphisms of $A$-bimodules defined by $\xi_n(u)=\rho u-u \rho $ for $u\in B_n(A)$ and $n\in\NN_0$. 
\begin{lem}
    \label{lem:bar1}
    It is easy to check that $\xi_0=d_1t_0$ and $\xi_n=t_{n-1}d_n+d_{n+1}t_n$ for $n\in \NN$.
\end{lem}
\begin{proof}
For $a_0,\dots,a_{n+1}\in A$ and $n\in \NN$, 
\[ 
d_1 t_0(a_0|a_1)=d_1(a_0|\rho|a_1)=a_0\rho|a_1-a_0|\rho a_1=\rho a_0|a_1-a_0|a_1\rho=\xi_0(a_0|a_1),
\] 
and 
\[ 
    d_{n+1}t_n(a_0|\dots|a_{n+1})
    =d_{n+1}\bigg(\sum_{j=0}^n(-1)^j a_0|\dots|a_j|\rho|a_{j+1}|\dots|a_{n+1}\bigg)
    =S_1+S_2,
\] 
where
\begin{align*}
S_1 
&= \sum_{j=0}^n(-1)^j \bigg\{
    (-1)^j a_0|\dots|a_{j-1}|a_j\rho|a_{j+1}|\dots|a_{n+1} 
    +(-1)^{j+1}a_0|\dots|a_j|\rho a_{j+1}|a_{j+2}|\dots|a_{n+1}
    \bigg\}
\\
&= a_0\rho|\dots|a_{n+1}-a_0|\dots|a_n|\rho a_{n+1}
\\
&=\xi_n(a_0|\dots|a_{n+1}), 
\end{align*}
and 
\begin{align*}
S_2 
&=\sum_{j=0}^n(-1)^j \bigg\{   
    \sum_{r=0}^{j-2}(-1)^r a_0|\dots|a_{r-1}|a_ra_{r+1}|a_{r+2}|\dots |a_j|\rho|a_{j+1}|\dots|a_{n+1}
\\ 
& \phantom{= \;}
+(-1)^{j-1}a_0|\dots|a_{j-2}|a_{j-1}a_j|\rho|a_{j+1}|\dots|a_{n+1}
\\  
& \phantom{= \;} 
+(-1)^{j+2}a_0|\dots|a_j|\rho|a_{j+1}a_{j+2}|a_{j+3}|\dots|a_{n+1}
\\  
& \phantom{= \;}
+\sum_{r=j+2}^{n}(-1)^{r+1}a_0|\dots|a_j|\rho|a_{j+1}|\dots|a_{r-1}|a_ra_{r+1}|a_{r+2}|\dots|a_{n+1}
\bigg\}
\\
&= \sum_{i=0}^n \bigg\{  
  \sum_{j=i+2}^n (-1)^j (-1)^i a_0|\dots|a_{i-1}|a_i a_{i+1}|a_{i+2}|\dots|a_j|\rho|a_{j+1}|\dots|a_{n+1} 
\\
& \phantom{= \;} 
-a_0|\dots|a_{i-1}|a_i a_{i+1}|\rho|a_{i+2}|\dots|a_{n+1}  
+a_0|\dots|a_{i-1}|\rho|a_ia_{i+1}|a_{i+2}|\dots|a_{n+1}
\\
& \phantom{= \;} 
+\sum_{j=0}^{i-2}(-1)^j(-1)^{i+1}a_0|\dots |a_j |\rho|a_{j+1}|\dots|a_{i-1}|a_i a_{i+1}|a_{i+2}|\dots|a_{n+1}
\bigg\}
\\
&= -\sum_{i=0}^n (-1)^i\bigg\{ 
    \sum_{j=0}^{i-2}(-1)^j a_0|\dots|a_j|\rho|a_{j+1}|\dots|a_{i-1}|a_ia_{i+1}|a_{i+2}|\dots|a_{n+1}
\\
& \phantom{= \;}   
+(-1)^{i-1}a_0|\dots|a_{i-1}|\rho|a_ia_{i+1}|a_{i+2}|\dots|a_{n+1}
+(-1)^i a_0|\dots |a_{i-1}|a_i a_{i+1}|\rho|a_{i+2}|\dots|a_{n+1}
\\
& \phantom{= \;}   
+\sum_{j=i+2}^n (-1)^{j-1}a_0|\dots|a_{i-1}|a_ia_{i+1}|a_{i+2}|\dots|a_j|\rho|a_{j+1}|\dots|a_{n+1}
\bigg\}
\\
&= - t_{n-1}d_n(a_0|\dots|a_{n+1}).
\end{align*}
Hence, $\xi_n=t_{n-1}d_n+d_{n+1}t_n$.
\end{proof}

\begin{lem}
\label{lem:bar-G-bracket}
The Gerstenhaber bracket $[\varphi, \ell_{\rho}]\in \Hom_{A^e}(B_{n-1}(A),A)$ is given by $[\varphi, \ell_{\rho}]=\varphi t_{n-1}$ for $\varphi\in \Hom_{A^e}(B_n(A),A)$ and $n\in\NN_0$.
\end{lem}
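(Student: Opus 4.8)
The plan is to prove the identity at the chain level by directly unwinding the definition of the Gerstenhaber bracket through the isomorphisms $F$ and $G$ of \eqref{eq:F-bar} and \eqref{eq:G-bar}, taking advantage of the fact that $\ell_{\rho}$ sits in cohomological degree $0$. By definition $[\varphi, \ell_{\rho}] = G\big([F(\varphi), F(\ell_{\rho})]\big)$, so I would first record that $F(\ell_{\rho}) \in \Hom_{\Bbbk}(A^{\otimes 0}, A)$ is the arity-zero map with value $F(\ell_{\rho})(1) = \ell_{\rho}(1|1) = \rho$, while $F(\varphi)(a_1|\dots|a_n) = \varphi(1|a_1|\dots|a_n|1)$ has arity $n$.

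Writing $f = F(\varphi)$ and $g = F(\ell_{\rho})$, the next step is to simplify the chain-level bracket $[f,g] = f \circ_{G} g - (-1)^{(n-1)(0-1)} g \circ_{G} f$ from Definition \ref{Gerstenhaber bracket at chain level}. Since $g$ has arity $0$, the convention that $f \circ_{G} g$ vanishes whenever the outer map has arity $0$ kills the term $g \circ_{G} f$, so the sign in front of it is irrelevant and only $f \circ_{G} g$ survives. Invoking the arity-zero convention for the inner map (substitute $g(1) = \rho$) and using $(-1)^{(0-1)(i-1)} = (-1)^{i-1}$, I would obtain
\[ [f,g](a_1|\dots|a_{n-1}) = \sum_{i=1}^{n} (-1)^{i-1}\, \varphi\big(1|a_1|\dots|a_{i-1}|\rho|a_i|\dots|a_{n-1}|1\big). \]

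I would then apply $G$: by \eqref{eq:G-bar}, $[\varphi, \ell_{\rho}](a_0|\dots|a_n) = a_0 \cdot [f,g](a_1|\dots|a_{n-1}) \cdot a_n$. Because $\varphi$ is a morphism of $A$-bimodules, the outer factors $a_0$ and $a_n$ are absorbed into the first and last tensor slots, so each summand becomes $\varphi(a_0|\dots|a_{i-1}|\rho|a_i|\dots|a_n)$; reindexing by $j = i - 1$ turns the sum into $\sum_{j=0}^{n-1} (-1)^j \varphi(a_0|\dots|a_j|\rho|a_{j+1}|\dots|a_n)$, which is precisely $(\varphi\, t_{n-1})(a_0|\dots|a_n)$ by the definition \eqref{t} of $t_{n-1}$. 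This proves the claim.

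The computation is essentially routine, and I do not expect a genuine obstacle; the only points demanding care are the two degenerate-arity clauses of Definition \ref{Gerstenhaber bracket at chain level} (the vanishing of $g \circ_{G} f$ and the substitution of $\rho$ for the inner evaluation), the matching of the sign $(-1)^{i-1}$ together with the index shift $j = i-1$ against the sign $(-1)^j$ in \eqref{t}, and the use of the bimodule structure to rewrite $a_0\,\varphi(\dots)\,a_n$. The edge case $n = 0$ needs no separate treatment: both the displayed sum and the sum defining $t_{-1}$ are empty, so both sides vanish, consistent with $[\HH^0(A), \HH^0(A)] \subseteq \HH^{-1}(A) = 0$.
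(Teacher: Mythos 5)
Your proof is correct and follows essentially the same route as the paper's: both unwind $[\varphi,\ell_{\rho}]=G\big([F(\varphi),F(\ell_{\rho})]\big)$, use the degenerate-arity conventions of Definition \ref{Gerstenhaber bracket at chain level} to kill $F(\ell_{\rho})\circ_{G}F(\varphi)$ and substitute $\rho$ for the inner evaluation with sign $(-1)^{i-1}$, absorb the outer factors $a_0$ and $a_n$ via the $A^e$-linearity of $\varphi$, and identify the resulting sum with $\varphi t_{n-1}$ from \eqref{t}. Your explicit reindexing $j=i-1$ and the separate check of the $n=0$ case are minor presentational additions that the paper leaves implicit.
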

\begin{proof}
For $a_0, \dots,a_{n}\in A$, 
\begin{align*}
[\varphi, \ell_{\rho}](a_0|\dots |a_{n})&=a_0[F(\varphi),F(\ell_{\rho})](a_1|\dots|a_{n-1})a_n
\\ 
&= a_0 (F(\varphi)\circ_{G} F(\ell_{\rho}))(a_1|\dots|a_{n-1})  a_n
\\ 
&=a_0\bigg( \sum_{i=1}^{n} (-1)^{i-1}F(\varphi)(a_1|\dots|a_{i-1}|\rho|a_i|\dots|a_{n-1})  \bigg)a_n
\\  
&= a_0\bigg( \sum_{i=1}^{n} (-1)^{i-1}\varphi(1|a_1|\dots|a_{i-1}|\rho|a_i|\dots|a_{n-1}|1)  \bigg)a_n
\\  
&= \sum_{i=1}^{n} (-1)^{i-1}\varphi(a_0|a_1|\dots|a_{i-1}|\rho|a_i|\dots|a_{n-1}|a_n)  
\\  
&= (\varphi t_{n-1})(a_0|a_1|\dots|a_{n-1}|a_n).
\end{align*}
Hence, $[\varphi, \ell_{\rho}]=\varphi t_{n-1}$.
\end{proof}


\begin{lem}
\label{lem: exist s}
We assume the same hypotheses as those of Theorem \ref{HH0}. 
Then, there exists a family $s_{\bullet}=\{ s_n :P_n\to B_{n+2}(A)\}_{n\in\NN_0}$ of morphisms of $A$-bimodules 
such that 
$i_{1}h^{\rho}_0-t_0i_0=d_2s_0$ and $i_{n+1}h^{\rho}_n-t_ni_n=d_{n+2}s_n-s_{n-1}\partial_n$ for $n\in\NN$.
\end{lem}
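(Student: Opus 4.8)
The plan is to recognise the two families $i_{n+1}h^{\rho}_n$ and $t_n i_n$, both morphisms $P_n \to B_{n+1}(A)$, as being ``the same up to homotopy'', and to obtain $s_\bullet$ as a contracting homotopy of their difference by the usual comparison argument against the acyclic bar resolution. Writing $\psi_n = i_{n+1}h^{\rho}_n - t_n i_n \in \Hom_{A^e}(P_n, B_{n+1}(A))$, the asserted identities read $\psi_0 = d_2 s_0$ and $\psi_n = d_{n+2}s_n - s_{n-1}\partial_n$ for $n \in \NN$, that is, precisely the statement that $s_\bullet$ is a null-homotopy for the family $\psi_\bullet$.

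The key preliminary observation I would make is that $i_\bullet$ intertwines $\eta_\bullet$ and $\xi_\bullet$: since each $i_n$ is a morphism of $A$-bimodules and both $\eta_n$ and $\xi_n$ are the ``commutator with $\rho$'' maps, we get $i_n \eta_n = \xi_n i_n$ for every $n \in \NN_0$. Substituting the homotopy relations \eqref{eq:eta} for $h^{\rho}_\bullet$ and those of Lemma \ref{lem:bar1} for $t_\bullet$ into this identity, and using that $i_\bullet$ is a chain map (so $d_n i_n = i_{n-1}\partial_n$ and $i_n\partial_{n+1}=d_{n+1}i_{n+1}$), a short rearrangement yields $d_1\psi_0 = 0$ and, for $n \in \NN$,
\[ d_{n+1}\psi_n = -\,\psi_{n-1}\partial_n . \]
These say exactly that $\psi_\bullet$ is a degree-shifting chain map lifting the zero map.

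With these compatibility identities available, I would construct $s_\bullet$ by induction on $n$, exploiting that each $P_n$ is projective as an $A^e$-module and that the bar resolution is exact in positive degrees, so that $\Ker d_{n+1} = \Img d_{n+2}$. For the base case, $d_1\psi_0 = 0$ places $\psi_0(P_0)$ inside $\Img d_2$, and projectivity of $P_0$ produces $s_0$ with $d_2 s_0 = \psi_0$. For the inductive step, assuming $s_0,\dots,s_{n-1}$ already satisfy the required relations, I would check that the source map $\psi_n + s_{n-1}\partial_n$ again lands in $\Ker d_{n+1} = \Img d_{n+2}$: applying $d_{n+1}$ and feeding in $d_{n+1}\psi_n = -\psi_{n-1}\partial_n$, the relation defining $s_{n-1}$, and $\partial_{n-1}\partial_n = 0$, everything cancels. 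Projectivity of $P_n$ then lifts $\psi_n + s_{n-1}\partial_n$ through the surjection $d_{n+2}\colon B_{n+2}(A) \to \Img d_{n+2}$, giving $s_n$ with $d_{n+2}s_n = \psi_n + s_{n-1}\partial_n$, which is the claimed identity.

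The main obstacle — more a matter of careful sign bookkeeping than a genuine difficulty — is the inductive vanishing check $d_{n+1}(\psi_n + s_{n-1}\partial_n) = 0$, since the whole construction rests on the compatibility identity $d_{n+1}\psi_n = -\psi_{n-1}\partial_n$ and on correctly invoking the previously built $s_{n-1}$ together with $\partial_{n-1}\partial_n = 0$. Once that obstruction is shown to vanish, the production of $s_n$ is the standard lifting of a morphism out of a projective against a surjection, so no further subtlety arises.
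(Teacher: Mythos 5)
Your proposal is correct and takes essentially the same route as the paper: the identity $d_{n+1}\psi_n=-\psi_{n-1}\partial_n$ that you isolate up front (via $i_n\eta_n=\xi_n i_n$, \eqref{eq:eta}, Lemma \ref{lem:bar1} and the chain-map property of $i_{\bullet}$) is precisely the computation the paper carries out inline in its inductive verification that $d_{n+1}(i_{n+1}h^{\rho}_n-t_ni_n+s_{n-1}\partial_n)=0$. The construction of each $s_n$ from exactness of the bar resolution in positive degrees together with projectivity of $P_n$ matches the paper's argument, so the difference is purely presentational.
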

\begin{proof}
Since $d_1(i_1h^{\rho}_0-t_0i_0)=i_0\partial_1 h^{\rho}_0 -d_1t_0i_0=i_0\eta_0-\xi_0 i_0=0$, where we used that $i_{0}$ is a morphism of $A$-bimodules in the last equality, there exists a morphism $s_0:P_0\to B_2(A)$ of $A$-bimodules such that $d_2s_0=i_1h^{\rho}_0-t_0i_0$. 
We now claim that there exists a family $s_{\bullet}=\{ s_n :P_n\to B_{n+2}(A)\}_{n\in\NN_0}$
of morphisms of $A$-bimodules such that 
$d_{n+2}s_n=i_{n+1}h^{\rho}_n-t_ni_n+s_{n-1}\partial_n$ by induction on $n\in\NN_0$ (where $s_{-1} = 0$). 
Indeed, 
\begin{align*}
d_{n+1}(i_{n+1}h^{\rho}_n-t_ni_n+s_{n-1}\partial_n)
&=d_{n+1}i_{n+1}h^{\rho}_n-d_{n+1}t_ni_n+(i_{n}h^{\rho}_{n-1}-t_{n-1}i_{n-1}+s_{n-2}\partial_{n-1})\partial_n
\\  
&=i_n\partial_{n+1}h^{\rho}_n-(\xi_n-t_{n-1}d_n)i_n+i_nh^{\rho}_{n-1}\partial_n-t_{n-1}i_{n-1}\partial_n
\\   
&=i_n(\partial_{n+1}h^{\rho}_n+h^{\rho}_{n-1}\partial_n)-\xi_n i_n
=i_n \eta_n-\xi_n i_n =0,
\end{align*}
where we used the inductive assumption in the first equality, Lemma \ref{lem:bar1} in the second equality, the definition of $\eta_{n}$ in the third equality and the fact that $i_{n}$ is a morphism of $A$-bimodules in the last equality. 
The result thus follows.
\end{proof}

\begin{proof}[Proof of Theorem \ref{HH0}]
Let $\varphi=\phi p_n\in \Hom_{A^e}(B_{n}(A),A)$. 
Then $\varphi$ is a cocycle and 
$[\phi, i^{*}_{\bullet}(\ell_{\rho})]=i^{*}_{\bullet}([p^{*}_{\bullet}(\phi), \ell_{\rho}])=[\varphi, \ell_{\rho}]i_{n-1}=\varphi t_{n-1}i_{n-1}$ by Lemma \ref{lem:bar-G-bracket}. 
Since $p_{\bullet} i_{\bullet}$ is homotopic to the identity of $P_{\bullet}$, there exists $\phi_1\in \Hom_{A^e}(P_{n-1}, A)$ such that $\phi-\varphi i_n =\phi-\phi p_n i_n=\phi_1 \partial_n$. 
Then, 
\begin{align*}
\phi h^{\rho}_{n-1}-\varphi t_{n-1}i_{n-1}
 &=
(\varphi i_n+\phi_1 \partial_n) h^{\rho}_{n-1}-\varphi t_{n-1}i_{n-1} 
\\
&= \varphi(d_{n+1}s_{n-1}-s_{n-2}\partial_{n-1})+\phi_1(\eta_{n-1}-h^{\rho}_{n-2}\partial_{n-1})
\\ 
&= -\varphi s_{n-2}\partial_{n-1}-\phi_1 h^{\rho}_{n-2}\partial_{n-1}\in \Hom_{A^e}(P_{n-1}, A)
\end{align*}
is a boundary, where we used Lemma \ref{lem: exist s} and the definition of $\eta_{n-1}$ in the second equality and the fact that $\phi_{1}$ is a morphism of $A$-bimodules in the last identity.
Hence, $[\phi, i^{*}_{\bullet}(\ell_{\rho})] \in \HH^{n-1}(A)$ coincides with the cohomology class of $\phi h^{\rho}_{n-1}$, as was to be shown.
\end{proof}

\begin{rk}
\label{rk:comparison to literature}
The homotopy maps $h^{\rho}_{\bullet}$ in Theorem \ref{HH0} 
are presumably homotopy liftings in the sense of \cite{MR3974969}. 
However, our maps $h^{\rho}_{\bullet}$ do not directly follow the scheme of that definition --as well as being far simpler, for they are restricted to a much easier situation-- since they do not require the computation of any map $\Delta : P_{\bullet} \rightarrow P_{\bullet} \otimes_{A} P_{\bullet}$ lifting the isomorphism $A \rightarrow A \otimes_{A} A$, which is also the case in \cite{MR3498646}.
\end{rk}

\subsection{\texorpdfstring{Method computing the bracket between $\HH^{1}(A)$ and $\HH^{n}(A)$ (after M. Suárez-Álvarez)}{Method computing the bracket between HH1(A) and HHn(A) (after M. Suárez-Álvarez)}}
\label{subsection:Gerstenhaber brackets of HH^1} 

In this subsection we will briefly recall the method introduced by M. Suárez-Álvarez in \cite{Mariano} 
to compute the Gerstenhaber bracket between $\HH^1(A)$  and $\HH^n(A)$ for $n \in \NN_{0}$. 

Recall that $\HH^1(A)$ is isomorphic to the quotient of the space of derivations of $A$ modulo the subspace of inner derivations.
Let $\rho: A\to A$ be a derivation of $A$, \textit{i.e.} $\rho(xy)=\rho(x)y+x\rho(y)$ for all $x,y\in A$. 
For a left $A$-module $M$, a {\color{ultramarine}{\textbf{$\rho$-operator}}} on $M$ is a map $f:M\to M$ such that $f(am)=\rho (a)m+af(m)$ for all $a\in A$ and $m\in M$.
It is direct to see that the map $\rho^e=\rho\otimes \id_A+\id_A\otimes \rho:A^e\to A^e$ defined by $\rho^e(x\otimes y)=\rho(x)\otimes y+x\otimes \rho(y)$ for $x,y\in A$ is a derivation of the enveloping algebra $A^e$ and $\rho$ is a $\rho^e$-operator on $A$. 

Let $(P_{\bullet},\partial_{\bullet})$ be a projective bimodule resolution over $A$ with augmentation $\mu : P_{0} \rightarrow A$. 
A {\color{ultramarine}{\textbf{$\rho^e$-lifting}}} of $\rho$ to $(P_{\bullet},\partial_{\bullet})$ is a family of $\rho^e$-operators $\rho_{\bullet}=\{ \rho_n: P_n\to P_n\}_{n\in\NN_0}$ such that $\mu \rho_0=\rho \mu$ and $\partial_n \rho_n=\rho_{n-1} \partial_n$ for $n\in\NN$.
The morphism of complexes 
\[     \rho_{\bullet,P_{\bullet}}^{\sharp}:\Hom_{A^e}(P_{\bullet},A)\to \Hom_{A^e}(P_{\bullet},A)     \] 
defined by $\rho_{n,P_{\bullet}}^{\sharp}(\phi)=\rho \phi-\phi \rho_n$ for $\phi\in \Hom_{A^e}(P_{n},A)$ and $n\in\NN_0$ is independent of the $\rho^e$-lifting up to homotopy (see \cite{Mariano}, Lemma 1.6) and it thus induces a morphism on cohomology that we will denote by the same symbol. 
Let $i_{\bullet}:P_{\bullet}\to B_{\bullet}(A)$ and $p_{\bullet}:B_{\bullet}(A)\to P_{\bullet}$ be morphisms of complexes of $A$-bimodules lifting $\id_A$.
Then the diagram
\begin{equation}
\label{eq:com b p}
\begin{tikzcd}[column sep= 40pt]
\operatorname{H}^{n}\big(\Hom_{A^e}(B_{\bullet}(A),A)\big) \ar[d,"\operatorname{H}(i^{*}_{\bullet})"] 
\ar[r, "\operatorname{H}(\rho_{\bullet,B_{\bullet}(A)}^{\sharp})"] 
& \operatorname{H}^{n}\big(\Hom_{A^e}(B_{\bullet}(A),A)\big) \ar[d,"\operatorname{H}(i^{*}_{\bullet})"]  
\\
\operatorname{H}^{n}\big(\Hom_{A^e}(P_{\bullet},A)\big)
\ar[r, "\operatorname{H}(\rho_{\bullet,P_{\bullet}}^{\sharp})"]
&{\operatorname{H}^{n}\big(\Hom_{A^e}(P_{\bullet},A)\big)}
\end{tikzcd}
\end{equation} 
commutes (see \cite{Mariano}, Lemma 1.6). 
On the other hand, as noted in \cite{Mariano}, Sections 2.1 and 2.2, using the $\rho^e$-lifting of $\rho$ to the bar resolution defined by 
\begin{equation}
    \begin{split}
        \rho_n(a_0|\dots|a_{n+1})=\sum_{j=0}^{n+1} a_0|\dots|a_{j-1}|\rho(a_j)|a_{j+1}|\dots|a_{n+1}
    \end{split}
\end{equation}
for $a_0,\dots,a_{n+1}\in A$ and $n\in\NN_0$, 
it is easy to check that the diagram 
\begin{equation}
\label{eq:com b c}
\begin{tikzcd}[column sep= 40pt]
\Hom_{A^e}\big(B_{n}(A),A\big) \ar[d, "F"] 
\ar[r, "\rho_{n,B_{\bullet}(A)}^{\sharp}"] 
& 
\Hom_{A^e}\big(B_{n}(A),A\big) \ar[d, "F"]  
\\
\Hom_{\Bbbk}(A^{\otimes n},A)
\ar[r, "{[\rho , - ]}"]
&
\Hom_{\Bbbk}(A^{\otimes n},A)
\end{tikzcd}
\end{equation} 
commutes. 
As a consequence, the Gerstenhaber bracket between the cohomology classes of $G(\rho) \in \Hom_{A^e}(B_{1}(A),A)$ and $\varphi \in \Hom_{A^e}(B_{n}(A),A)$ is given by the cohomology class of $[G(\rho), \varphi]=G([\rho,F(\varphi)])=\rho_{n,B_{\bullet}(A)}^{\sharp}(\varphi)$. 

We finally recall one of the main results of \cite{Mariano}, which tells us that we can compute the Gerstenhaber bracket between $\HH^{1}(A)$ and $
\HH^n(A)$ for $n\in \NN_0$ using any projective bimodule resolution of $A$ (see \cite{Mariano}, Thm. A and Section 2.2).
The proof just follows from observing that, on cohomology, \eqref{eq:com b p} gives us the identities
\[     \big[i^{*}_{\bullet}(G(\rho)), \phi\big]
=i^{*}_{\bullet}\big([G(\rho),p^{*}_{\bullet}(\phi)]\big)
=i^{*}_{\bullet} \big(\rho_{n,B_{\bullet}(A)}^{\sharp}(p^{*}_{\bullet}( \phi))\big)
=\rho_{n,P_{\bullet}}^{\sharp}(\phi).     \]

\begin{thm}
\label{thm:HH1 HHn} 
Let $(P_{\bullet},\partial_{\bullet})$ be a projective bimodule resolution over the algebra $A$ with augmentation $\mu : P_{0} \rightarrow A$, and let 
$i_{1} : P_{1}\to B_{1}(A)$ be the first component of the morphism $i_{\bullet} : P_{\bullet}\to B_{\bullet}(A)$ of complexes of $A$-bimodules lifting $\id_A$.  
Given a cocycle $\phi\in \Hom_{A^e}(P_{n},A)$ and $n\in \NN_0$, the Gerstenhaber bracket $[G(\rho) i_{1}, \phi] \in \HH^n(A)$ is given by the cohomology class of $\rho_{n,P_{\bullet}}^{\sharp}(\phi)$.
\end{thm}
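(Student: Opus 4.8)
The plan is to establish the chain of identities sketched immediately before the statement, namely that, as cohomology classes,
\[
\big[i^{*}_{\bullet}(G(\rho)), \phi\big]
= i^{*}_{\bullet}\big([G(\rho), p^{*}_{\bullet}(\phi)]\big)
= i^{*}_{\bullet}\big(\rho_{n,B_{\bullet}(A)}^{\sharp}(p^{*}_{\bullet}(\phi))\big)
= \rho_{n,P_{\bullet}}^{\sharp}(\phi),
\]
together with the observation that $G(\rho) i_{1} = i^{*}_{1}(G(\rho))$ by the very definition of $i^{*}_{\bullet}$, so that the left-hand side is exactly $[G(\rho) i_{1}, \phi]$. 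Each of the three equalities will be justified by combining a homotopy-invariance statement with one of the two commutative diagrams already at our disposal, so that no new computation is required.

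First I would record the standing fact that $\operatorname{H}(i^{*}_{\bullet})$ and $\operatorname{H}(p^{*}_{\bullet})$ are mutually inverse isomorphisms on Hochschild cohomology: from $p_{\bullet} i_{\bullet}\simeq \id_{P_{\bullet}}$ and $i_{\bullet}p_{\bullet}\simeq\id_{B_{\bullet}(A)}$ one gets $\operatorname{H}(p^{*}_{\bullet})\operatorname{H}(i^{*}_{\bullet})=\id$ and $\operatorname{H}(i^{*}_{\bullet})\operatorname{H}(p^{*}_{\bullet})=\id$. For the first equality I would unwind the definition of the transported bracket on $P_{\bullet}$: by construction $[i^{*}_{1}(G(\rho)), \phi]$ is the class of $i^{*}_{\bullet}\big([p^{*}_{\bullet}(i^{*}_{1}(G(\rho))), p^{*}_{\bullet}(\phi)]\big)$, and since $p^{*}_{\bullet} i^{*}_{\bullet}(G(\rho))$ is cohomologous to $G(\rho)$ and the bracket descends to cohomology, the first slot may be replaced by $G(\rho)$. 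The second equality is precisely the computation attached to diagram \eqref{eq:com b c}, which gives $[G(\rho), \varphi]=\rho_{n,B_{\bullet}(A)}^{\sharp}(\varphi)$ for any cocycle $\varphi\in\Hom_{A^e}(B_n(A),A)$, applied to $\varphi=p^{*}_{\bullet}(\phi)$. The third equality uses the commutativity of diagram \eqref{eq:com b p} on cohomology, i.e. $\operatorname{H}(i^{*}_{\bullet})\operatorname{H}(\rho_{\bullet,B_{\bullet}(A)}^{\sharp})=\operatorname{H}(\rho_{\bullet,P_{\bullet}}^{\sharp})\operatorname{H}(i^{*}_{\bullet})$; evaluating at $p^{*}_{\bullet}(\phi)$ and invoking $i^{*}_{\bullet} p^{*}_{\bullet}(\phi)\simeq\phi$ together with the fact that $\rho_{n,P_{\bullet}}^{\sharp}$ is well defined on cohomology yields $\rho_{n,P_{\bullet}}^{\sharp}(\phi)$.

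The only genuinely delicate point is the bookkeeping between chain-level and cohomology-level equalities: the composites $p^{*}_{\bullet} i^{*}_{\bullet}$ and $i^{*}_{\bullet} p^{*}_{\bullet}$ are merely homotopic to the identity, so each replacement (of $p^{*}_{\bullet} i^{*}_{\bullet}(G(\rho))$ by $G(\rho)$ in the first step, and of $i^{*}_{\bullet} p^{*}_{\bullet}(\phi)$ by $\phi$ in the last step) is licit only because both the Gerstenhaber bracket and the operator $\rho^{\sharp}$ are already known to factor through cohomology. Once this distinction is kept straight, the theorem follows at once from the invariance results and the two diagrams established earlier, with no additional calculation.
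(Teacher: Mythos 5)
Your proposal is correct and follows essentially the same route as the paper, whose proof of Theorem \ref{thm:HH1 HHn} is exactly the chain of identities $[i^{*}_{\bullet}(G(\rho)),\phi]=i^{*}_{\bullet}\big([G(\rho),p^{*}_{\bullet}(\phi)]\big)=i^{*}_{\bullet}\big(\rho_{n,B_{\bullet}(A)}^{\sharp}(p^{*}_{\bullet}(\phi))\big)=\rho_{n,P_{\bullet}}^{\sharp}(\phi)$ obtained from the commutative diagrams \eqref{eq:com b p} and \eqref{eq:com b c}. Your explicit bookkeeping of which equalities hold at the chain level and which only in cohomology (justified by the well-definedness of the bracket and of $\rho^{\sharp}$ on cohomology) merely spells out what the paper leaves implicit.
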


\begin{rk}
Note that in our Theorem \ref{HH0}, as well as in the result proved in \cite{Mariano} that was recalled before as Theorem \ref{thm:HH1 HHn}, we need at least some component(s) of the comparison map from the generic projective resolution $(P_{\bullet},\partial_{\bullet})$ to the bar resolution. 
\end{rk}


\section{\texorpdfstring{Basics on the Fomin-Kirillov algebra $\FK(3)$ on $3$ generators}{Basics on the Fomin-Kirillov algebra FK(3) on 3 generators}}
\label{sec:basics}

In this section we will review the basic definitions concerning the Fomin-Kirillov algebra on $3$ generators.
Recall that, 
given $i\in\ZZ$, we will denote by $\ZZ_{\leqslant i}$ the set $\{m\in\ZZ|m\leqslant i \}$.
Given $r \in \RR$, we set $\lfloor r\rfloor =\sup \{n\in \ZZ|n\leqslant r \}$ the usual \textcolor{ultramarine}{\textbf{floor function}}. 
From now on, $\Bbbk$ is a field of characteristic different from $2$ and $3$.

We recall that the 
\textcolor{ultramarine}{\textbf{Fomin–Kirillov algebra}}
on $3$ generators is the $\Bbbk$-algebra $\FK(3)$ generated by the $\Bbbk$-vector space $V$ spanned by three elements $a$, $b$ and $c$ modulo the ideal generated by the vector space $R \subseteq V^{\otimes 2}$ spanned by 
\begin{equation}
    \{ a^2, b^2, c^2, ab+bc+ca, ba+ac+cb \}. 
\end{equation}   
This is a nonnegatively graded algebra by setting the generators $a$, $b$ and $c$ in (internal) degree $1$.
As usual, we will omit the tensor symbol $\otimes$ when denoting the product of the elements of the tensor algebra $\T V= \oplus_{n \in \NN_{0}} V^{\otimes n} $. 
We refer the reader to \cites{FK99,MS00} for more information on Fomin-Kirillov algebras.
Note that $\FK(3)=\oplus_{m\in \llbracket 0,4 \rrbracket} \FK(3)_m$, where $\FK(3)_m$ is the subspace of $\FK(3)$ concentrated in internal degree $m$. 
It is easy to see that 
\begin{equation}
\label{eq:basis-FK}
\mathcalboondox{B}=\{1,a,b,c,ab,bc,ba,ac,aba,abc,bac,abac  \}   
\end{equation}     
is a basis of $\FK(3)$ (see \cite{FK99}). 
Given $m\in \llbracket 0,4 \rrbracket$, we will denote by $\mathcalboondox{B}_m$ the subset of \eqref{eq:basis-FK} that is a basis of $\FK(3)_{m}$. 

Let us briefly denote by $\{A,B,C\}$ the basis of $V^*$ dual to the basis $\{a,b,c\}$ of $V$, where the former are concentrated in internal degree $-1$.
The \textcolor{ultramarine}{\textbf{quadratic dual}} $\FK(3)^!=\oplus_{n\in\NN_0} \FK(3)^!_{-n}$ of the Fomin Kirillov algebra $\FK(3)$ is then given by 
\[     \FK(3)^!=\Bbbk\langle A,B,C\rangle/ (BA-AC, CA-AB, AB-BC, CB-BA),     \]
where $\FK(3)^!_{-n}$ is the subspace of $\FK(3)^! $ concentrated in internal degree $-n$.
Notice that $\FK(3)^!_0=\Bbbk$ and $\FK(3)^!_{-1}\cong V^*$. 
We recall that $\mathcalboondox{B}^!_n=\{A^n, B^n, C^n, A^{n-1}B, A^{n-1}C, A^{n-2}B^2 \}$ is a basis of $\FK(3)^!_{-n}$ for all integers $n \geqslant 2$, where we follow the convention that $A^{0}B^{2} = B^{2}$ (see \cite{SV16}, Lemma 4.4). 

For simplicity, from now on we will denote the Fomin-Kirillov algebra $\FK(3)$ simply by $A$.
Let $(A_{-n}^!)^*$ be the dual space of $A_{-n}^!$ and $\mathcalboondox{B}^{!*}_n = \{\alpha_n, \beta_n,\gamma_n,\alpha_{n-1}\beta, \alpha_{n-1}\gamma, \alpha_{n-2}\beta_2\} \setminus \{ \mathbf{0} \}$ the dual basis to $\mathcalboondox{B}^!_n$ for $n\in \NN$, where we will follow the convention that if the index of some letter in an element of the previous sets is less than or equal to zero, it is the zero element $\mathbf{0}$.
We will omit the index $1$ for the elements of the previous bases and write $\mathcalboondox{B}^{!*}_0 = \{\epsilon^!\}$, where $\epsilon^!$ is the basis element of $(A_{0}^!)^*$. 
The previous bases for the homogeneous components of $A$ or $(A^!)^{\#}  =\oplus_{n\in\NN_0}(A_{-n}^!)^*$ will be called {\color{ultramarine}{\textbf{usual}}}. 
Recall that  $(A^!)^{\#}$ is a graded bimodule over $A^!$ via $(ufv)(w)=f(vwu)$ for $u,v,w \in A^!$ and $f\in (A^!)^{\#}$. 

For the explicit description of the {\color{ultramarine}{\textbf{bimodule Koszul complex}}} $(K^b_{\bullet}, d^b_{\bullet})$ over the Fomin-Kirillov algebra $A$, we refer the reader to \cite{es zl}, Subsection 3.1.
The following result gives an explicit description of the minimal projective resolution of $A$ in the category of bounded below graded $A$-bimodules.

\begin{prop}
\label{pro:bimodule projective resolution}
(\cite{es zl}, Prop. 3.7)
The minimal projective resolution $(P^b_{\bullet},\delta^b_{\bullet})$ of $A$ in the category of bounded below graded $A$-bimodules is given as follows. For $n\in\NN_0$, set 
\[ P^b_n=\underset{i\in \llbracket 0, \lfloor n/4\rfloor \rrbracket}{\bigoplus}\omega_i K^b_{n-4i}
=\underset{i\in \llbracket 0, \lfloor n/4\rfloor \rrbracket}{\bigoplus}\omega_i A\otimes (A^!_{-(n-4i)})^*\otimes A,\]
where $\omega_i$ is a symbol of homological degree $4i$ and internal degree $6i$ for all $i\in\NN_0$, the $A$-bimodule structure of $P^b_n$ is given by 
$x'(\omega_i x\otimes u\otimes y)y'=\omega_i x'x\otimes u\otimes yy'$ for all $x,x',y,y'\in A$ and $u\in (A^!_{-(n-4i)})^*$, and the differential $\delta^b_{n}:P^b_n\to P^b_{n-1}$ for $n\in \NN$ is given by 
\[\delta^b_{n}\bigg( \underset{i\in \llbracket 0, \lfloor n/4\rfloor \rrbracket}{\sum} \omega_i\rho_{n-4i}\bigg)=\underset{i\in \llbracket 0, \lfloor n/4\rfloor \rrbracket}{\sum} \big( \omega_i d^b_{n-4i}(\rho_{n-4i})+\omega_{i-1}f^b_{n-4i}(\rho_{n-4i}) \big),\]
where $\rho_j\in K^b_j$ for $j\in\NN_0$, $\omega_{-1}=0$ and $f^b_j:K^b_j\to K^b_{j+3}$ are the morphisms given in \cite{es zl}, (3.2). 
This gives a minimal projective resolution of $A$ by means of the augmentation $\epsilon^b:P^b_0=A\otimes (A^!_0)^*\otimes A\to A$, where $\epsilon^b(x\otimes \epsilon^!\otimes y)=xy$ for $x,y\in A$.
\end{prop}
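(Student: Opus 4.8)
The plan is to verify for the explicit complex $(P^b_\bullet,\delta^b_\bullet)$ the four defining properties of a minimal projective bimodule resolution: that each $P^b_n$ is projective, that $\delta^b_\bullet$ is a differential, that the resolution is minimal, and that it is exact with $\operatorname{H}_0\cong A$ via $\epsilon^b$. Three of these are formal. Each $P^b_n=\bigoplus_i \omega_i\, A\otimes (A^!_{-(n-4i)})^*\otimes A$ is a finite direct sum of free $A$-bimodules of the form $A\otimes W\otimes A\cong A^e\otimes W$ with $W$ finite-dimensional, hence finitely generated and projective; this is exactly what makes Corollary \ref{cor:gr Hom} applicable once exactness is known. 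That $\delta^b_{n-1}\delta^b_n=0$ reduces, after separating contributions by $\omega$-level, to three identities for the maps of \cite{es zl}: the Koszul differential squares to zero, the connecting map $f^b$ commutes with $d^b$ up to sign, and $f^b_{j+3}f^b_j=0$. I would simply record these relations from the explicit formulas in \cite{es zl}, (3.2).

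Minimality is essentially free from the internal grading. Writing $j=n-4i$, a bimodule generator of the summand $\omega_i K^b_j$ sits in internal degree $6i+j=n+2i$, whereas the generators of $P^b_{n-1}$ lying in a summand $\omega_{i'}K^b_{n-1-4i'}$ sit in internal degree $(n-1)+2i'$. Since $\delta^b_n$ is homogeneous of degree $0$, it sends such a generator into the internal degree $n+2i$ part of $P^b_{n-1}$, and for this to be nonzero one needs $i'\leqslant i$, forcing the matrix coefficients to have internal degree $(n+2i)-\big((n-1)+2i'\big)=1+2(i-i')\geqslant 1$. Thus every entry of $\delta^b_\bullet$ lies in the augmentation ideal of $A^e$ (the $d^b$-part in degree $1$, the $f^b$-part in degree $3$), so the resolution is minimal.

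The substantial point is exactness, which I would attack through the filtration by $\omega$-level. Setting $F_pP^b_n=\bigoplus_{i\leqslant p}\omega_i K^b_{n-4i}$ makes each $F_p$ a subcomplex, because the $d^b$-part of $\delta^b$ preserves the $\omega$-level while the $f^b$-part strictly lowers it; hence the associated graded complex is $\bigoplus_p \omega_p K^b_{\bullet-4p}$ with differential $d^b$. Since this filtration is finite in each total homological degree, the resulting spectral sequence converges, and its first page is the homology $\operatorname{H}_\bullet(K^b_\bullet)$ of the bimodule Koszul complex, with $d^1$ the map on Koszul homology induced by $f^b$. To conclude I would first compute $\operatorname{H}_\bullet(K^b_\bullet)$ — this is precisely where the failure of $A$ to be Koszul is measured — obtaining $\operatorname{H}_0(K^b_\bullet)\cong A$ together with the higher Koszul homology, and then show that the $f^b$-induced differentials pair up and annihilate everything except the copy of $A$ sitting in $\omega$-level $0$ and homological degree $0$, matching $\epsilon^b$.

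The main obstacle is this last step: identifying $\operatorname{H}_\bullet(K^b_\bullet)$ explicitly and checking that the connecting maps $f^b$ induce the required isomorphisms so that the spectral sequence collapses to $A$ in bidegree $(0,0)$. This is unavoidably computational and relies on the explicit form of $d^b$ and $f^b$ from \cite{es zl}, Subsection 3.1 and (3.2); the clean periodicity of the symbols $\omega_i$ (homological degree $4$, internal degree $6$) is exactly what allows these $f^b$-pairings to be organized uniformly. An alternative, trading the spectral sequence for heavier bookkeeping, is to exhibit an explicit contracting homotopy for the augmented complex $P^b_\bullet\to A\to 0$ internal degree by internal degree, using the finite basis $\mathcalboondox{B}$ of $A$ and the usual bases of $(A^!)^{\#}$; since only finitely many summands occur in each internal degree, this reduces to a finite linear-algebra verification.
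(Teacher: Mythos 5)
You could not have known this from the statement alone, but the paper does not prove this proposition at all: it is imported verbatim from \cite{es zl}, Prop.\ 3.7, so there is no in-paper proof to compare against, and your attempt must be judged on whether it would actually establish the result. The formal components of your plan are correct, and your bookkeeping checks out against the data recalled in this paper: each $P^b_n$ is a finite direct sum of bimodules $A\otimes W\otimes A\cong A^e\otimes W$ with $\dim_{\Bbbk}W<\infty$, hence graded-free; the identity $\delta^b_{n-1}\delta^b_n=0$ does decompose by $\omega$-level into $(d^b)^2=0$, $f^b_{j-1}d^b_j+d^b_{j+3}f^b_j=0$ and $f^b_{j+3}f^b_j=0$; and your internal-degree count for minimality is right --- a generator of $\omega_iK^b_{n-4i}$ has internal degree $n+2i$, so the $d^b$-entries have coefficients of total degree $1$ and the $f^b$-entries of total degree $3$ (as one can confirm on the expressions in Fact \ref{fact:fb0degree1}), and since $A$ is connected graded these lie in the graded radical, which is exactly what minimality requires.

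The genuine gap is exactness, and it is not a peripheral one: it is the entire content of the proposition, because the maps $f^b_j$ of \cite{es zl}, (3.2), exist precisely to repair the failure of the Koszul complex to be a resolution, $\FK(3)$ not being Koszul. Your filtration scaffold is sound --- the filtration is finite in each homological degree, so the spectral sequence converges, with $E^1_{p,q}=\omega_p\operatorname{H}_{q-3p}(K^b_{\bullet})$ and $d^1$ induced by $f^b$ --- but both steps you defer are where all the work lies: computing $\operatorname{H}_{\bullet}(K^b_{\bullet})$ as a complex of bimodules, and proving that $d^1$ annihilates everything except $A$ in bidegree $(0,0)$ (once $E^2$ is concentrated there, collapse is automatic, a point you should state rather than leave implicit). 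A cleaner way to organize your induction than the spectral sequence is the short exact sequence of complexes $0\to (K^b_{\bullet},d^b)\to (P^b_{\bullet},\delta^b)\to \omega_1 P^b_{\bullet-4}\to 0$, whose connecting morphism is induced by $f^b$; note in particular that $\omega_1 P^b_{\bullet-4}$ is a quotient, not a subcomplex, so your phrase about the $f^b$-part ``strictly lowering'' the level must be used in that direction. Your fallback via a contracting homotopy has a further unpatched hole: each internal degree does give a finite-dimensional complex, but there are infinitely many internal degrees, so ``finite linear algebra'' only suffices if the $\omega$-periodicity is converted into an actual induction, e.g.\ via the short exact sequence above --- merely invoking the periodicity of the symbols $\omega_i$ does not yet do this. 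Finally, be aware that for connected graded algebras there is a cheaper standard route that avoids computing bimodule Koszul homology altogether: a bounded-below complex of graded-free $A$-bimodules augmented over $A$ is exact if and only if its reduction under $-\otimes_A\Bbbk$ is, so exactness can be tested against the minimal one-sided resolution of the trivial module, which is available from \cite{SV16} and \cite{es zl}; this is very likely how the cited source proceeds, and it converts your ``main obstacle'' into a comparison with an already known complex.
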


To reduce space, we will typically use vertical bars instead of the tensor product symbols $\otimes$ for the elements of $P_{\bullet}$.
We will use in the sequel the following explicit expression of some values of the map
$f^b_0$ in the previous result. 
They follow immediately from \cites{es zl}, (3.2). 
\begin{fact}
\label{fact:fb0degree1}
The morphism $f^b_0$ defined in \cites{es zl}, (3.2), satisfies that 
\begin{align*}
    f^b_0(a|\epsilon^!|1)&=2a|\alpha_3|bac+2a|\beta_3|abc-2a|\gamma_3|aba-a|\alpha_2\beta|abc+a|\alpha_2\gamma|aba-a|\alpha\beta_2|bac
    -ac|\alpha_2\beta|ab
    \\
    & \phantom{= \;} 
    -ab|\alpha_2\gamma|ac+ab|\alpha\beta_2|ab+ab|\alpha\beta_2|bc-ac|\alpha\beta_2|ba-2ab|\alpha_3|ab-2ab|\alpha_3|bc
    \\
    & \phantom{= \;} 
    +2ac|\alpha_3|ba
    +2ac|\beta_3|ab+2ab|\gamma_3|ac-abc|\alpha_2\beta|a-aba|\alpha_2\beta|c+abc|\alpha_2\gamma|b
    \\
    & \phantom{= \;} 
    +aba|\alpha_2\gamma|a +2abc|\beta_3|a
    +2aba|\beta_3|c-2abc|\gamma_3|b-2aba|\gamma_3|a+2abac|\alpha_3|1
    \\
    & \phantom{= \;} 
    -abac|\alpha\beta_2|1,
    \\
    f^b_0(1|\epsilon^!|a)&=-2|\alpha_3|abac+1|\alpha\beta_2|abac-a|\alpha_2\beta|abc-c|\alpha_2\beta|aba+a|\alpha_2\gamma|aba+b|\alpha_2\gamma|abc
    \\
    & \phantom{= \;} 
    +2a|\beta_3|abc+2c|\beta_3|aba-2a|\gamma_3|aba-2b|\gamma_3|abc+ba|\alpha_2\beta|(ab+bc)+ab|\alpha_2\gamma|ba
    \\
    & \phantom{= \;} 
    +bc|\alpha_2\gamma|ba+ab|\alpha\beta_2|(ab+bc)-ac|\alpha\beta_2|ba-2ab|\alpha_3|(ab+bc)+2ac|\alpha_3|ba
    \\
    & \phantom{= \;} 
    -2ba|\beta_3|(ab+bc)-2ab|\gamma_3|ba-2bc|\gamma_3|ba+2bac|\alpha_3|a+2abc|\beta_3|a-2aba|\gamma_3|a
    \\
    & \phantom{= \;} 
    -abc|\alpha_2\beta|a+aba|\alpha_2\gamma|a-bac|\alpha\beta_2|a,
    \\
    f^b_0(b|\epsilon^!|1)&=2b|\alpha_3|bac+2b|\beta_3|abc-2b|\gamma_3|aba-b|\alpha_2\beta|abc+b|\alpha_2\gamma|aba-b|\alpha\beta_2|bac+ba|\alpha_2\beta|ac
    \\
    & \phantom{= \;} 
    +ba|\alpha_2\beta|ba-bc|\alpha_2\beta|ab-ba|\alpha_2\gamma|bc-bc|\alpha\beta_2|ba+2bc|\alpha_3|ba-2ba|\beta_3|ac
    \\
    & \phantom{= \;} 
    -2ba|\beta_3|ba 
    +2bc|\beta_3|ab+2ba|\gamma_3|bc
    +aba|\alpha_2\gamma|b+bac|\alpha_2\gamma|a-aba|\alpha\beta_2|c
    \\
    & \phantom{= \;} 
    -bac|\alpha\beta_2|b
    +2aba|\alpha_3|c+2bac|\alpha_3|b-2aba|\gamma_3|b-2bac|\gamma_3|a+2abac|\beta_3|1
    \\
    & \phantom{= \;} 
    -abac|\alpha_2\beta|1,
    \\
    f^b_0(1|\epsilon^!|b)&=-2|\beta_3|abac+1|\alpha_2\beta|abac+a|\alpha_2\gamma|bac+b|\alpha_2\gamma|aba-b|\alpha\beta_2|bac-c|\alpha\beta_2|aba
    \\
    & \phantom{= \;} 
    +2b|\alpha_3|bac+2c|\alpha_3|aba-2a|\gamma_3|bac-2b|\gamma_3|aba-bc|\alpha_2\beta|ab+ba|\alpha_2\beta|(ba+ac)
    \\
    & \phantom{= \;} 
    +ba|\alpha_2\gamma|ab+ac|\alpha_2\gamma|ab+ab|\alpha\beta_2|(ba+ac)-2ab|\alpha_3|(ba+ac)+2bc|\beta_3|ab
    \\
    & \phantom{= \;} 
    -2ba|\beta_3|(ba+ac)-2ba|\gamma_3|ab-2ac|\gamma_3|ab
    +2bac|\alpha_3|b+2abc|\beta_3|b-2aba|\gamma_3|b
     \\
    & \phantom{= \;} 
    -abc|\alpha_2\beta|b+aba|\alpha_2\gamma|b-bac|\alpha\beta_2|b,
    \\
    f^b_0(c|\epsilon^!|1)&=2c|\alpha_3|bac+2c|\beta_3|abc-2c|\gamma_3|aba-c|\alpha_2\beta|abc+c|\alpha_2\gamma|aba-c|\alpha\beta_2|bac
     \\
    & \phantom{= \;} 
    -(ab+bc)|\alpha_2\beta|ac-(ab+bc)|\alpha_2\beta|ba+(ab+bc)|\alpha_2\gamma|bc+(ba+ac)|\alpha_2\gamma|ac
    \\
    & \phantom{= \;} 
    -(ba+ac)|\alpha\beta_2|ab-(ba+ac)|\alpha\beta_2|bc+2(ba+ac)|\alpha_3|ab+2(ba+ac)|\alpha_3|bc
    \\
    & \phantom{= \;} 
    +2(ab+bc)|\beta_3|ac+2(ab+bc)|\beta_3|ba-2(ab+bc)|\gamma_3|bc-2(ba+ac)|\gamma_3|ac
     \\
    & \phantom{= \;} 
    +bac|\alpha_2\beta|a-abc|\alpha_2\beta|c-bac|\alpha\beta_2|c+abc|\alpha\beta_2|b+2bac|\alpha_3|c-2abc|\alpha_3|b
    \\
    & \phantom{= \;} 
    -2bac|\beta_3|a
    +2abc|\beta_3|c+2abac|\gamma_3|1-abac|\alpha_2\gamma|1,
    \\
    f^b_0(1|\epsilon^!|c)&=-2|\gamma_3|abac+1|\alpha_2\gamma|abac+a|\alpha_2\beta|bac-c|\alpha_2\beta|abc+b|\alpha\beta_2|abc-c|\alpha\beta_2|bac
    \\
    & \phantom{= \;} 
    -2b|\alpha_3|abc
    +2c|\alpha_3|bac-2a|\beta_3|bac+2c|\beta_3|abc-bc|\alpha_2\beta|ac+ab|\alpha_2\gamma|bc
    +bc|\alpha_2\gamma|bc
    \\
    & \phantom{= \;} 
    +ba|\alpha_2\gamma|ac
    +ac|\alpha_2\gamma|ac-ac|\alpha\beta_2|bc
    +2ac|\alpha_3|bc+2bc|\beta_3|ac-2ab|\gamma_3|bc-2bc|\gamma_3|bc
    \\
    & \phantom{= \;} 
    -2ba|\gamma_3|ac-2ac|\gamma_3|ac
    +2bac|\alpha_3|c+2abc|\beta_3|c-2aba|\gamma_3|c-abc|\alpha_2\beta|c+aba|\alpha_2\gamma|c
    \\
    & \phantom{= \;} 
    -bac|\alpha\beta_2|c.
    \end{align*}
\end{fact}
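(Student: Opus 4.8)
The map $f^b_0 \colon K^b_0 \to K^b_3$ is, by virtue of its role in the differential $\delta^b_{\bullet}$ of Proposition \ref{pro:bimodule projective resolution}, a morphism of $A$-bimodules: since $\delta^b_{\bullet}$ is a bimodule map and is assembled from the $\omega_i d^b_{\bullet}(-)$ and $\omega_{i-1} f^b_{\bullet}(-)$ terms while the symbols $\omega_i$ are inert for the bimodule structure, each $f^b_j$ must itself be a bimodule morphism. Now $K^b_0 = A\otimes (A^!_0)^*\otimes A = A\otimes \Bbbk\epsilon^!\otimes A$ is the free $A$-bimodule generated by $1|\epsilon^!|1$, so $f^b_0$ is completely determined by the single value $f^b_0(1|\epsilon^!|1)\in K^b_3 = A\otimes (A^!_{-3})^*\otimes A$. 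The plan is therefore to read off this value from the explicit formula for $f^b_0$ in \cite{es zl}, (3.2), and to deduce the six displayed formulas by transporting it through the bimodule structure.

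Concretely, for each generator $g\in\{a,b,c\}$ one has $g|\epsilon^!|1 = g\cdot(1|\epsilon^!|1)$ and $1|\epsilon^!|g = (1|\epsilon^!|1)\cdot g$. Writing $f^b_0(1|\epsilon^!|1)=\sum_k x_k|u_k|y_k$ with $x_k,y_k\in A$ and $u_k\in (A^!_{-3})^*$, the left- and right-module laws give
\[
f^b_0(g|\epsilon^!|1)=\sum_k (g x_k)|u_k|y_k
\quad\text{and}\quad
f^b_0(1|\epsilon^!|g)=\sum_k x_k|u_k|(y_k g).
\]
Thus each of the six values arises from the one element $f^b_0(1|\epsilon^!|1)$ simply by multiplying the left (resp.\ right) tensor factor of every summand by the generator $g$. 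A useful internal consistency check is the internal degree: since $\omega_{i-1}$ lowers the $\omega$-degree by $6$ while the $(A^!)^*$-factor raises its degree by $3$, the map $f^b_0$ raises the total degree of the outer $A$-factors by $3$, so $f^b_0(1|\epsilon^!|1)$ has $A$-factors of total degree $3$ and, e.g., $f^b_0(a|\epsilon^!|1)$ has them of total degree $4$, matching the displayed terms such as $2abac|\alpha_3|1$.

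It remains to rewrite each product $g x_k$ and $y_k g$ in the standard basis $\mathcalboondox{B}$ of \eqref{eq:basis-FK}, using the defining relations of $\FK(3)$: the squares $a^2=b^2=c^2=0$, the consequences $ca=-(ab+bc)$ and $cb=-(ba+ac)$ of the two quadratic relations, and the products these induce with the degree-$2$ and degree-$3$ basis elements. Many products vanish (whenever a square of a generator appears), while others split into two basis vectors through $ca$ and $cb$, so the reductions are not a mere discarding of vanishing terms but require tracking the resulting cancellations and signs; after reducing, one collects the surviving terms and checks agreement with the stated expressions. Since $f^b_0(1|\epsilon^!|1)$ is supplied explicitly by \cite{es zl}, (3.2), no further conceptual input is needed: the only genuine obstacle is the bookkeeping of this $\FK(3)$-arithmetic, each of the six values requiring the reduction of on the order of twenty-five products, and the entire verification is a finite, purely mechanical computation in the twelve-dimensional algebra $A$.
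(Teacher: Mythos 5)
Your proposal is correct and is essentially the paper's own (implicit) argument: the paper's justification that the six values ``follow immediately from \cite{es zl}, (3.2)'' is exactly the computation you describe, since $f^b_0$ is a morphism of $A$-bimodules on the free bimodule $K^b_0$ generated by $1|\epsilon^!|1$, so each value is obtained by multiplying the generator value from \cite{es zl}, (3.2) on the left or right by $a$, $b$, $c$ and reducing in the basis $\mathcalboondox{B}$ via $a^2=b^2=c^2=0$, $ca=-(ab+bc)$, $cb=-(ba+ac)$. Your degree bookkeeping (outer $A$-factors raised by $3$, matching the internal degree $0$ of $\delta^b_{\bullet}$) and the bimodule-morphism justification are both sound, so nothing further is needed.
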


Given $n\in\NN_0$, there is a canonical isomorphism 
\begin{equation}
\label{X Y}
    \begin{split}
        \Hom_{A^e}(P_{n}^b,A)\cong Q^n 
    \end{split}
\end{equation}
of graded $\Bbbk$-vector spaces, 
where $Q^n= \oplus_ {i\in \llbracket 0, \lfloor n/4 \rfloor \rrbracket} \omega^*_i K^{n-4i}$ and $K^n=\operatorname{Hom}_{\Bbbk}((A_{-n}^!)^*,A)$. 
Transporting the differential of the left member of \eqref{X Y} 
induced by that of $(P^b_{\bullet},\delta^b_{\bullet})$, 
we obtain a complex of $\Bbbk$-vector spaces $Q^{\bullet}$, 
whose cohomology gives the linear structure of the Hochschild cohomology $\HH^{\bullet}(A)$. 
Note that the space $\operatorname{Hom}_{\Bbbk}((A_{-n}^!)^*,A_m)$ is concentrated in cohomological degree $n$ and internal degree $m-n$. 
The symbol $\omega^*_i$ has cohomological degree $4i$ and internal degree $-6i$ for $i\in\NN_0$. 
We will usually omit $\omega^*_0$ for simplicity. 

Let $H^n_m$ be the subspace of $\operatorname{H}^n(Q^{\bullet})$ concentrated in internal degree $m-n$ for $m,n\in \ZZ$. 
Note that $H^n_m=0$ for $(n,m)\in \ZZ^2\setminus ( \NN_0\times \ZZ_{\leqslant 4})$.
The following result gives a recursive description of the spaces $H^n_m$.

\begin{prop}
\label{H_{-1}}
(\cite{es zl}, Cor. 5.3)
For integers $m\leqslant 1$ and $n\in\NN_0$, we have 
\begin{equation}\label{euqation:Hnm coho}	
\begin{split}
H_{m}^n\cong
\begin{cases}
\omega^*_{\frac{1-m}{2}}H_{1}^{n+2m-2},  & \text{if $m$ is odd}, 
\\
\omega^*_{-\frac{m}{2}}H_0^{n+2m},  & \text{if $m$ is even}.
\end{cases} 
\end{split}
\end{equation}
\end{prop}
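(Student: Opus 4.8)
The plan is to exhibit the recursion as the iteration of a single chain-level map, the shift of the $\omega^{*}$-index, and to control it by a bookkeeping of internal degrees. First I would make the differential of $Q^{\bullet}$ explicit by dualizing the formula for $\delta^{b}_{\bullet}$ in Proposition \ref{pro:bimodule projective resolution}. Writing a typical summand of $Q^{n}$ as $\omega^{*}_{i}\psi$ with $\psi\in K^{n-4i}$, one obtains
\[ D(\omega^{*}_{i}\psi)=\omega^{*}_{i}(\psi d^{b})+\omega^{*}_{i+1}(\psi f^{b}), \]
so the coboundary of $Q^{\bullet}$ splits into a diagonal Koszul part, coming from $d^{b}_{\bullet}$, and an off-diagonal part, coming from $f^{b}_{\bullet}$, which raises the $\omega^{*}$-index by one. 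I would also record that the summand of $Q^{n}$ contributing to $H^{n}_{m}$ is $\bigoplus_{i}\omega^{*}_{i}\Hom_{\Bbbk}\big((A^{!}_{-(n-4i)})^{*},A_{m+2i}\big)$, where $i$ ranges over those integers with $i\geqslant 0$, $n-4i\geqslant 0$ and $0\leqslant m+2i\leqslant 4$, since $\omega^{*}_{i}$ has internal degree $-6i$ and $\Hom_{\Bbbk}\big((A^{!}_{-j})^{*},A_{m'}\big)$ has internal degree $m'-j$.

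Next I would introduce $\Omega\colon Q^{n}\to Q^{n+4}$ defined by $\Omega(\omega^{*}_{i}\psi)=\omega^{*}_{i+1}\psi$. Because the expression for $D$ above is invariant under shifting the $\omega^{*}$-index, a one-line check gives $D\Omega=\Omega D$, so $\Omega$ is a morphism of complexes; it is homogeneous of cohomological degree $4$ and internal degree $-6$, and its $k$-fold iterate is multiplication by the symbol $\omega^{*}_{k}$. In terms of the bigrading, $\Omega$ therefore induces maps $H^{n}_{m}\to H^{n+4}_{m-2}$, which are exactly the maps appearing in the statement.

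The heart of the argument is then the following Key Lemma: on chain groups $\Omega$ is always injective, and its restriction $\Omega\colon Q^{n}_{[m-n]}\to Q^{n+4}_{[m-n-6]}$ to the part in internal degree $m-n$ is bijective as soon as $m\leqslant 1$. Injectivity is clear, since $\Omega$ is the identity on each $\Hom$-space and merely relabels $i\mapsto i+1$. Bijectivity for $m\leqslant 1$ follows from comparing, under $i\mapsto i+1$, the three constraints $i\geqslant 0$, $n-4i\geqslant 0$ and $0\leqslant m+2i\leqslant 4$ at $(n,m)$ with the same constraints at $(n+4,m-2)$: for $m\leqslant 1$ the admissible $i$-ranges correspond exactly, whereas for $m\geqslant 2$ the target acquires one extra summand at the smallest $\omega^{*}$-index, which is precisely why the recursion is restricted to $m\leqslant 1$. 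A short homological argument then upgrades this to an isomorphism $\Omega\colon H^{n}_{m}\to H^{n+4}_{m-2}$ for every $m\leqslant 1$: bijectivity in internal degree $m-n$ at cohomological positions $n-1$ and $n$ (whose parameters are $\leqslant 1$) yields injectivity on $H^{n}_{m}$, while surjectivity onto $H^{n+4}_{m-2}$ uses bijectivity at position $n$ together with injectivity of $\Omega$ at position $n+1$ (whose parameter may equal $2$, so that only injectivity, not surjectivity, is available there).

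Finally I would iterate, starting from internal parameter $1$ in the odd case and $0$ in the even case: applying $\Omega$ exactly $(1-m)/2$ (resp. $-m/2$) times carries $H^{n+2m-2}_{1}$ (resp. $H^{n+2m}_{0}$) isomorphically onto $H^{n}_{m}$ and multiplies by $\omega^{*}_{(1-m)/2}$ (resp. $\omega^{*}_{-m/2}$), which is the claimed formula; every intermediate parameter traversed is $\leqslant 1$, so each step is licensed by the previous paragraph. I expect the main obstacle to be the homological step rather than the combinatorics: one must verify carefully that surjectivity on cohomology at parameter $m$ genuinely requires only the injectivity of $\Omega$ one position further (at parameter $m+1$, where $\Omega$ need not be surjective), and the index bookkeeping of the Key Lemma together with the precise derivation of $D$ must be carried out with attention to the boundary behaviour, even though both are routine.
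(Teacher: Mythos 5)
Your proof is correct. Note that this paper contains no argument of its own for the proposition --- it is quoted verbatim from \cite{es zl}, Cor.\ 5.3 --- so the only meaningful check is internal soundness, and your reconstruction passes it: the dualized differential is indeed $D(\omega^*_i\psi)=\omega^*_i(\psi d^b)+\omega^*_{i+1}(\psi f^b)$ (the $f^b$-part raises the $\omega^*$-index because $f^b_j:K^b_j\to K^b_{j+3}$ shifts a $\omega_{i+1}$-summand of $P^b_{n+1}$ into the $\omega_i$-summand of $P^b_n$), so the shift $\Omega$, which is essentially cup product with $X_{14}=\omega^*_1\epsilon^!|1$, commutes with $D$ on the nose, with no sign. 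Your index bookkeeping is exactly right: under $i\mapsto i+1$ the constraints $i\geqslant 0$, $n-4i\geqslant 0$, $0\leqslant m+2i\leqslant 4$ at $(n,m)$ match those at $(n+4,m-2)$, and the sole defect is the $i'=0$ summand $\omega^*_0\operatorname{Hom}_{\Bbbk}\big((A^!_{-(n-4)-8})^{*},A_{m-2}\big)$-type term, which exists precisely when $0\leqslant m-2\leqslant 4$; hence chain-level bijectivity on the internal-degree-$(m-n)$ strand for all $m\leqslant 1$. The homological step you flagged as the main risk also checks out, since along the fixed-internal-degree strand the parameters at positions $n-1$, $n$, $n+1$ are $m-1$, $m$, $m+1$: killing a primitive of $\Omega x$ uses surjectivity at parameter $m-1\leqslant 0$ plus injectivity at $m$, and lifting a cocycle uses surjectivity at $m\leqslant 1$ plus only injectivity at $m+1$ (which holds unconditionally, as $\Omega$ merely relabels summands); the iteration then traverses parameters $1,-1,\dots$ (resp.\ $0,-2,\dots$), all $\leqslant 1$, and degenerate cases with $n+2m-2<0$ are covered because both strands are then empty.
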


Given elements $x\in \mathcalboondox{B}_{m}$ and $y\in \mathcalboondox{B}_{n}^{!*}$, the symbol $y|x$ will denote the linear map in $K^n=\operatorname{Hom}_{\Bbbk}((A^!_{-n})^*,A)$, which maps $y$ to $x$ and sends the other usual basis elements of $(A^!_{-n})^*$ to zero. 
See \cite{es zl}, Cor. 5.4, for specific representatives of the cohomology classes of a basis of $H^n_m$ for $(n,m)\in \NN_0\times \ZZ_{\leqslant 4}$.

The algebra structure of the Hochschild cohomology $\HH^{\bullet}(A)$ (with the multiplication given by the cup product) is described as follows. 
\begin{thm}
\label{thm: algebra structure of cohomology}
(\cite{es zl}, Cor. 6.11)
The Hochschild cohomology $\HH^{\bullet}(A)$ is isomorphic to the quotient of the free graded-commutative (for the cohomological degree) $\Bbbk$-algebra generated by fourteen elements (with fixed cohomological degrees and internal degrees) modulo the ideal generated by the elements given in \cite{es zl}, (6.5).
\end{thm}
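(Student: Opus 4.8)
The plan is to leverage the explicit minimal bimodule resolution $(P^b_\bullet, \delta^b_\bullet)$ of Proposition \ref{pro:bimodule projective resolution} together with the identification \eqref{X Y} of $\Hom_{A^e}(P^b_n, A)$ with $Q^n$, so that the underlying graded vector space of $\HH^\bullet(A)$ is already controlled by the recursion for $H^n_m$ of Proposition \ref{H_{-1}}. Thus the task reduces to determining the cup product, since graded-commutativity for the cohomological degree is automatic for Hochschild cohomology (so that the free \emph{graded-commutative} algebra, in which the odd-degree generators anticommute and square to zero, is indeed the right ambient object in characteristic different from $2$ and $3$). First I would fix, once and for all, the explicit cocycle representatives $y|x$ with $x \in \mathcalboondox{B}_m$ and $y \in \mathcalboondox{B}^{!*}_n$ for a basis of each bidegree, and record the bigraded Hilbert series of $\HH^\bullet(A)$ via the recursion, since these dimension data will be used repeatedly to check completeness.

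Next I would compute the cup product. The cleanest route is to construct a diagonal approximation, i.e.\ a morphism $\Delta_\bullet : P^b_\bullet \to (P^b \otimes_A P^b)_\bullet$ of complexes of $A$-bimodules lifting the canonical isomorphism $A \xrightarrow{\sim} A \otimes_A A$; the cup product of cocycles $\phi \in \Hom_{A^e}(P^b_m, A)$ and $\psi \in \Hom_{A^e}(P^b_n, A)$ is then represented by composing $\Delta_{m+n}$ with $\phi \otimes \psi$. The key simplification is the $4$-periodicity built into the resolution through the symbols $\omega_i$ of homological degree $4i$ and internal degree $6i$: one only needs $\Delta$ in finitely many low degrees together with its interaction with the $\omega_i$, after which the products involving the top classes are forced by multiplicativity. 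I would carry this out degree by degree using the explicit differentials $d^b_\bullet$ and the maps $f^b_\bullet$ (whose relevant values are recorded in Fact \ref{fact:fb0degree1}), checking at each stage that $\Delta$ lifts the identity.

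With the multiplication in hand, I would then isolate a minimal generating set. The bidegrees dictated by the recursion single out three classes in $\HH^0(A)$, five in $\HH^1(A)$, four in $\HH^2(A)$, one in $\HH^3(A)$ and one in $\HH^4(A)$; call them $X_1, \dots, X_{14}$, the last being the periodicity class attached to $\omega^*_1$. I would verify that the induced algebra map from the free graded-commutative algebra on these generators onto $\HH^\bullet(A)$ is surjective by a bidegree-by-bidegree dimension count: the recursion of Proposition \ref{H_{-1}} collapses every bidegree $(n,m)$ with $m \leqslant 1$ onto the boundary rows $m = 0$ and $m = 1$ (via the $\omega^*$ shift, which is implemented by multiplication with the $\HH^4(A)$ class), so it suffices to realize a basis of $H^n_m$ as a product of generators in the finitely many residual rows $m \in \{0,1,2,3,4\}$ and then propagate downward.

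The main obstacle, and the bulk of the labor, is twofold. Constructing and controlling the diagonal approximation $\Delta_\bullet$ is delicate because $A$ is twelve-dimensional and the resolution mixes the Koszul part $K^b_\bullet$ with the periodic symbols $\omega_i$, so the formulae are unwieldy; the size of the expressions in Fact \ref{fact:fb0degree1} already foreshadows this. Second, proving that the presentation is \emph{complete} — that the ideal of relations is generated exactly by the listed elements, not merely that those relations hold — requires matching, in every bidegree, the dimension of the quotient of the free graded-commutative algebra by the candidate ideal against the known dimension of $H^n_m$. This is where the recursion and the Hilbert-series bookkeeping do the decisive work: one shows the candidate relations already cut the free algebra down to the correct dimension in each bidegree, whence the surjection is an isomorphism.
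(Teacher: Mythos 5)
The theorem is not proved in this paper at all: it is imported verbatim from \cite{es zl}, Cor.~6.11, and everything the present paper recalls from that source --- the minimal resolution of Proposition~\ref{pro:bimodule projective resolution}, the recursion of Proposition~\ref{H_{-1}}, the Hilbert series of \cite{es zl}, Cor.~5.9, the cup-product identities quoted in \eqref{eq:some cup products1}--\eqref{eq:some cup products2} and \cite{es zl}, Fact~6.3, and the relations of \cite{es zl}, (6.5) --- shows that your blueprint (explicit minimal resolution, cup products computed by a lifted comparison map, minimal generators read off bidegree by bidegree, and completeness of the presentation checked by matching bigraded Hilbert series, with the $\omega^{*}_1$-shift of the recursion realized as cup product with the $\HH^{4}(A)$ class $X_{14}$, exactly as the paper itself uses when it writes $H^{6}_{-2}=\omega^{*}_{1}H^{2}_{0}$ as the span of the $X_{j}X_{14}$) is essentially the route taken there. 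Your proposal is sound and matches the cited proof's architecture; the one step you assert rather than argue --- that multiplication by $X_{14}$ implements the isomorphism of Proposition~\ref{H_{-1}} --- is precisely a cochain-level verification carried out in \cite{es zl}, so you should flag it as a lemma to be proved rather than a consequence of the recursion alone.
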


\begin{rk}
To avoid any confusion, we remark that the definition of cup product on Hochschild cohomology in the previous theorem is the one given in \cite{MR161898}, Section 7. 
Explicitly, at the level of the bar resolution it is given by $(f  \hskip 0.2mm \mathsmaller{\cup} \hskip 0.2mm g)(a_{1},\dots,a_{m+n}) = f(a_{1},\dots,a_{m}) g(a_{m+1},\dots,a_{m+n})$,    
for $a_{1},\dots,a_{m+n} \in A$, $f \in \operatorname{Hom}_{\Bbbk}(A^{\otimes m},A)$ and $g \in \operatorname{Hom}_{\Bbbk}(A^{\otimes n},A)$. 
A different convention, in the spirit of Koszul's sign rule, includes a sign $(-1)^{m n}$ (see \cite{Sarah}, Def. 1.3.1 and Rk. 1.3.3). 
To reduce space we will denote the cup product simply by juxtaposition. 
\end{rk}

Moreover, 
the fourteen generators of $\HH^{\bullet}(A)$ mentioned in Theorem \ref{thm: algebra structure of cohomology} are represented in $\operatorname{H}^{\bullet}(Q^{\bullet})$ by the following cocycles: 
$X_1=\epsilon^!|(ab+ba)$, 
$X_2=\epsilon^!|(ab+bc-ac)$, 
$X_3=\epsilon^!|abac$, 
$X_4=\alpha|bac$, 
$X_5=\beta|abc$, 
$X_6=\gamma|aba$, 
$X_7=\alpha|(aba-abc)$, 
$X_8=\alpha|a+\beta|b+\gamma|c$, 
$X_9=\alpha_2|1$, 
$X_{10}=\beta_2|1$, 
$X_{11}=\gamma_2|1$, 
$X_{12}=(\alpha\beta+\alpha\gamma)|1$, 
$X_{13}=\alpha_3|a+\beta_3|b+\gamma_3|c$ and $X_{14}=\omega^*_1\epsilon^!|1$.
Let $Y_i\in \Hom_{A^e}(P^b_n,A)$ be the element associated to $X_i$ via the isomorphism \eqref{X Y} for $i\in \llbracket 1,14\rrbracket $.
In what follows and to simplify our notation, given a cocycle $\phi$, we will use the same symbol $\phi$ for its cohomology class.

Let $i_{\bullet}:P^b_{\bullet}\to B_{\bullet}(A)$ be a morphism of complexes of $A$-bimodules lifting $\id_A$.
It is clear that $i_0:A\otimes (A^!_{0})^* \otimes A\to A\otimes A$ and $i_1:A\otimes (A^!_{-1})^* \otimes A\to A^{\otimes 3}$ can be chosen as follows 
\begin{equation}
\label{eq:morphism}
    \begin{split}
      i_0(1|\epsilon^!|1)=1|1, \quad 
      i_1(1|\alpha|1)=-1|a|1, \quad 
      i_1(1|\beta|1)=-1|b|1, \quad 
      i_1(1|\gamma|1)=-1|c|1.
    \end{split}
\end{equation}


\section{\texorpdfstring{Gerstenhaber brackets on Hochschild cohomology of $\FK(3)$}{Gerstenhaber brackets on Hochschild cohomology of FK(3)}}
\label{section:Gerstenhaber brackets on Hochschild cohomology}

\subsection{\texorpdfstring{Gerstenhaber brackets of $\HH^0(A)$ with $\HH^n(A)$}{Gerstenhaber brackets of HH0(A) with HHn(A)}}
\label{subsection:Gerstenhaber brackets of HH^0(A) with HH^n(A)} 

In this subsection we are going to utilize the method introduced in Subsection \ref{subsection:Gerstenhaber brackets of HH^0} to compute the Gerstenhaber bracket of $X_{i}$ for $i \in \llbracket 1, 14 \rrbracket$ with the elements $X_{1}, X_{2}, X_{3}$ in $\HH^{0}(A)$. 
To wit, for every element $X_{i}$ with $i \in \llbracket 1, 3 \rrbracket$, we find the associated element $\rho$ in the center $\Z(A)$ such that $\ell_{\rho} i_{0} = X_{i}$, provide the corresponding self-homotopy $h^{\rho}_{\bullet}$ satisfying \eqref{eq:eta} and then compute the respective Gerstenhaber brackets by means of Theorem \ref{HH0}.  

We remark first that $[X_i,1]=0$ for $i \in \llbracket 1, 14 \rrbracket$, since  
$h^1_{\bullet}=0$.
On the other hand, Definition \ref{Gerstenhaber bracket at chain level} tells us that $[X_i,X_j]=0$ for $i,j\in\llbracket 1,3 \rrbracket$. 
The proof of the following three results is a lengthy but straightforward computation. 

\begin{fact}
\label{fact:X1}
Let $\rho = ab+ba \in \Z(A)$. 
Then, there is a self-contracting homotopy $h^{\rho}_{\bullet}$ satisfying \eqref{eq:eta} such that 
\begin{align*}
    h^{\rho}_0(1|\epsilon^!|1)&=-b|\alpha|1-a|\beta|1-1|\alpha|b-1|\beta|a,
    \\
    h^{\rho}_n(1|\alpha_n|1)&=(-1)^{n+1}b|\alpha_{n+1}|1-1|\alpha_{n+1}|b,
    \\
    h^{\rho}_n(1|\beta_n|1)&=(-1)^{n+1}a|\beta_{n+1}|1-1|\beta_{n+1}|a
\end{align*}
for $n\in\NN$, and  
\begin{align*}
h^{\rho}_1(1|\gamma|1)&=b|\alpha_2|1+a|\beta_2|1+a|\alpha\beta|1+b|\alpha\gamma|1-1|\alpha_2|b-1|\beta_2|a-1|\alpha\beta|b-1|\alpha\gamma|a,
    \\
h^{\rho}_2(1|\gamma_2|1)&=a|\gamma_3|1+b|\gamma_3|1+c|\alpha_2\beta|1+c|\alpha\beta_2|1+1|\gamma_3|a+1|\gamma_3|b+1|\alpha_2\beta|c+1|\alpha\beta_2|c,
    \\
h^{\rho}_2(1|\alpha\beta|1)&=-b|\alpha_3|1-c|\beta_3|1-a|\alpha_2\gamma|1-1|\alpha_3|c-1|\beta_3|a-1|\alpha_2\gamma|b,
    \\
h^{\rho}_2(1|\alpha\gamma|1)&=-c|\alpha_3|1-a|\beta_3|1-b|\alpha_2\gamma|1-1|\alpha_3|b-1|\beta_3|c-1|\alpha_2\gamma|a.
\end{align*}
\end{fact}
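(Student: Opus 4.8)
The plan is to treat the displayed formulas as an ansatz and to verify directly that they define a null-homotopy of $\eta_{\bullet}$, i.e.\ that they satisfy \eqref{eq:eta}. That \emph{some} null-homotopy exists is automatic: since $\rho=ab+ba$ is central, $\eta_{\bullet}$ lifts the zero endomorphism of $A$ and is therefore null-homotopic by the comparison argument recalled just before Theorem \ref{HH0}, so the content of the statement is only the explicit choice. A first consistency check, which also explains the shape of the ansatz, is that every displayed value of $h^{\rho}_{\bullet}$ is homogeneous of internal degree $2=\deg\rho$, matching the internal degree of $\eta_{\bullet}$ and of both sides of \eqref{eq:eta}.

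The verification needs only two ingredients. First, since $h^{\rho}_{\bullet}$ is a morphism of $A$-bimodules it suffices to evaluate everything on the generators $1|u|1$, for which $\eta_n(1|u|1)=(ab+ba)|u|1-1|u|(ab+ba)$. Second, one needs the differential $\delta^b_{\bullet}$ of Proposition \ref{pro:bimodule projective resolution}, and here there is a decisive simplification: every element occurring in the statement, in both the source and the displayed image, lies in the $\omega_0$-summand $K^b_{\bullet}=A\otimes(A^!_{-\bullet})^*\otimes A$, and on that summand $\delta^b_{\bullet}$ restricts to the Koszul differential $d^b_{\bullet}$, because the connecting maps $f^b_{\bullet}$ only send an $\omega_i$-summand into the $\omega_{i-1}$-summand. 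Thus the maps $f^b_{\bullet}$ of Fact \ref{fact:fb0degree1} never intervene, and the whole computation takes place inside $K^b_{\bullet}$ using $d^b_{\bullet}$ alone, which is controlled by the left and right degree-one contraction actions of $A^!_{-1}\cong V^*$ on $(A^!)^{\#}$ dual to the multiplication of $A^!$.

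With these in hand the check splits by type of basis element. The case $n=0$ ($u=\epsilon^!$) is a one-line application of $d^b_1$ to $h^{\rho}_0(1|\epsilon^!|1)$. The pure powers $\alpha_n,\beta_n$ are treated by induction on $n$: the identities $B^kA=AC^k$ and $C^kA=AB^k$ in $A^!$, together with their right-handed analogues, show that $A^n$ admits a degree-one factorisation only as $A\cdot A^{n-1}$ and $A^{n-1}\cdot A$, so that both contraction actions send $\alpha_n$ to $\alpha_{n-1}$ and $d^b_n(1|\alpha_n|1)$ is supported on $\alpha_{n-1}$ alone (and symmetrically for $\beta_n$). Consequently $h^{\rho}_{n-1}\delta^b_n(1|\alpha_n|1)$ is expressed through $h^{\rho}_{n-1}(1|\alpha_{n-1}|1)$, and the inductive step closes against $\delta^b_{n+1}h^{\rho}_n=d^b_{n+1}h^{\rho}_n$. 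The remaining elements $\gamma$, $\gamma_2$, $\alpha\beta$, $\alpha\gamma$ yield genuine linear combinations under contraction, and one simply expands both sides of \eqref{eq:eta} in each case, using the already established values of $h^{\rho}_0$ and $h^{\rho}_1$ to evaluate the term $h^{\rho}_{n-1}\delta^b_n$.

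Each individual computation is elementary, so the main difficulty is organisational and, above all, a matter of signs. Expanding the inductive step for $\alpha_n$, the two contributions $h^{\rho}_{n-1}\delta^b_n$ and $\delta^b_{n+1}h^{\rho}_n$ each produce ``mixed'' terms of the form $a|\alpha_n|b$ and $b|\alpha_n|a$ that do not appear in $\eta_n$; these cancel only thanks to the parity-dependent signs carried by $d^b_{\bullet}$, which must be tracked in exact agreement with the conventions of \cite{es zl} so that the surviving terms assemble into $+\eta_n$ and not its negative. It is finally worth recording the internal consistency of the ansatz: since $d^b_n(1|\alpha_n|1)$ involves only $\alpha_{n-1}$, and likewise for the other listed elements, the prescribed values already satisfy \eqref{eq:eta} among themselves and hence extend to a genuine null-homotopy on all of $P^b_{\bullet}$, the unspecified components being immaterial for the bracket computations to which Theorem \ref{HH0} is applied.
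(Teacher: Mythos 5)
Your proposal is correct and takes essentially the same route as the paper, whose proof of this Fact is precisely the direct ``lengthy but straightforward computation'' verifying that the displayed values satisfy \eqref{eq:eta}. Your organizing observations --- that everything stays in the $\omega_0$-summand, where $\delta^b_{\bullet}$ restricts to $d^b_{\bullet}$ and the maps $f^b_{\bullet}$ never intervene; that $d^b_n(1|\alpha_n|1)$ is supported on $\alpha_{n-1}$ (and symmetrically for $\beta_n$), which makes the induction close; and that the partially specified homotopy extends to all of $P^b_{\bullet}$ by exactness, since it is prescribed only on free bimodule generators --- are all sound and merely make explicit what the paper leaves implicit.
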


\begin{fact}
\label{fact:X2}
Let $\rho = ab+bc-ac \in \Z(A)$. 
Then, there is a self-contracting homotopy $h^{\rho}_{\bullet}$ satisfying \eqref{eq:eta} such that 
\begin{align*}
    h^{\rho}_0(1|\epsilon^!|1)&=c|\alpha|1+a|\gamma|1+1|\alpha|c+1|\gamma|a,
    \\
    h^{\rho}_n(1|\alpha_n|1)&=(-1)^n c|\alpha_{n+1}|1+1|\alpha_{n+1}|c,
    \\
    h^{\rho}_n(1|\gamma_n|1)&=(-1)^n a|\gamma_{n+1}|1+1|\gamma_{n+1}|a
\end{align*}
for $n\in \NN$, and 
\begin{align*}
    h^{\rho}_1(1|\beta|1)&=-c|\alpha_2|1-a|\gamma_2|1-c|\alpha\beta|1-a|\alpha\gamma|1+1|\alpha_2|c+1|\gamma_2|a+1|\alpha\beta|a+1|\alpha\gamma|c,
    \\
    h^{\rho}_2(1|\beta_2|1)&= -a|\beta_3|1-c|\beta_3|1-b|\alpha_2\gamma|1-b|\alpha\beta_2|1-1|\beta_3|a-1|\beta_3|c-1|\alpha_2\gamma|b-1|\alpha\beta_2|b,
    \\
    h^{\rho}_2(1|\alpha\beta|1)&=b|\alpha_3|1+a|\gamma_3|1+c|\alpha_2\beta|1+1|\alpha_3|c+1|\gamma_3|b+1|\alpha_2\beta|a,
    \\
    h^{\rho}_2(1|\alpha\gamma|1)&= c|\alpha_3|1+b|\gamma_3|1+a|\alpha_2\beta|1+1|\alpha_3|b+1|\gamma_3|a+1|\alpha_2\beta|c.
\end{align*}
\end{fact}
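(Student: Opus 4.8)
The plan is to verify the null-homotopy equations \eqref{eq:eta} directly for the explicit maps $h^{\rho}_{\bullet}$ that are displayed, working throughout with the minimal bimodule resolution $(P^b_{\bullet}, \delta^b_{\bullet})$ of Proposition \ref{pro:bimodule projective resolution}. The abstract half of the statement is immediate: since $\rho = ab+bc-ac$ is central, the family $\eta_{\bullet}$ with $\eta_n(v) = \rho v - v\rho$ lifts the zero endomorphism of $A$, so by the standard comparison theorem for projective resolutions it is null-homotopic, which already guarantees the existence of some self-contracting homotopy. The content of the Fact is thus the specific choice of $h^{\rho}_{\bullet}$, and the task reduces to confirming that the listed formulas satisfy $\eta_0 = \delta^b_1 h^{\rho}_0$ and $\eta_n = h^{\rho}_{n-1}\delta^b_n + \delta^b_{n+1}h^{\rho}_n$. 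Since $\eta_n$, $h^{\rho}_n$ and $\delta^b_n$ are all morphisms of $A$-bimodules, it suffices to check these identities on the middle generators $1|u|1$, where $u$ runs over the usual basis of each $(A^!_{-m})^*$ (together with the symbols $\omega_i$); on such a generator one has $\eta_n(1|u|1) = \rho|u|1 - 1|u|\rho$.

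Next I would dispose of the low homological degrees by hand. For the homotopy equations indexed by $n \in \llbracket 0, 2 \rrbracket$ only the differentials $\delta^b_1, \delta^b_2, \delta^b_3$ intervene, and since the symbols $\omega_i$ with $i \geqslant 1$ enter only from homological degree $4$ onwards, each of these coincides with the Koszul differential $d^b_{\bullet}$. Thus the verifications of $\eta_0 = \delta^b_1 h^{\rho}_0$, $\eta_1 = h^{\rho}_0 \delta^b_1 + \delta^b_2 h^{\rho}_1$ and $\eta_2 = h^{\rho}_1 \delta^b_2 + \delta^b_3 h^{\rho}_2$ take place entirely inside the pure Koszul part: each is a matter of substituting the listed formulas, applying $d^b_{\bullet}$, and then collapsing every resulting product of generators of $A$ by means of the defining relations $a^2 = b^2 = c^2 = 0$, $ab+bc+ca = 0$ and $ba+ac+cb = 0$. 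This is exactly what pins down $h^{\rho}_0$, $h^{\rho}_1$ and $h^{\rho}_2$ on all basis elements in those degrees. The analogous computation establishing Fact \ref{fact:X1} proceeds verbatim with $\rho = ab+ba$, so the two Facts can be treated in parallel.

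For the uniform formulas $h^{\rho}_n(1|\alpha_n|1) = (-1)^n c|\alpha_{n+1}|1 + 1|\alpha_{n+1}|c$ and $h^{\rho}_n(1|\gamma_n|1) = (-1)^n a|\gamma_{n+1}|1 + 1|\gamma_{n+1}|a$, valid for all $n$, I would argue by induction on $n$, exploiting the periodicity built into $(P^b_{\bullet}, \delta^b_{\bullet})$. The elements $\alpha_n$ and $\gamma_n$ lie in the $\omega_0$ summand $K^b_n$, and their images under $h^{\rho}_n$ lie again in $K^b_{n+1}$; since the $f^b_{\bullet}$ contributions to $\delta^b_{\bullet}$ are always attached to the symbols $\omega_{i-1}$ with $i \geqslant 1$, they do not act on the $\omega_0$ strand. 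Hence the whole verification on these generators stays within the pure Koszul part and the inductive step merely reproduces the degree-one computation with a shifted sign.

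The genuine difficulty lies in completing these partial data to a full self-contracting homotopy. The components of $h^{\rho}_{\bullet}$ carrying the symbol $\omega_1$, which are not recorded in the statement but are required to satisfy \eqref{eq:eta} in the degrees where $\delta^b_4$ first intervenes, force one to carry along the long expressions for $f^b_0$ from Fact \ref{fact:fb0degree1}. Across this period-$4$ boundary one must simultaneously track many monomials, reduce each product in $\FK(3)$ by the relations above, and confirm that the $\omega_1$-components cancel correctly; keeping the signs and the distinct left and right bimodule actions straight is where errors are most likely to creep in, and this bookkeeping is the real bottleneck rather than any conceptual point. Since the existence of such a completion is already secured by the comparison theorem, it then remains only to record the values of $h^{\rho}_{\bullet}$ that actually enter the bracket calculations of the following subsections, which are precisely those displayed.
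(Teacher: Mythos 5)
Your proposal takes essentially the same route as the paper, whose entire proof of this Fact is the sentence preceding it (``a lengthy but straightforward computation''): one checks \eqref{eq:eta} directly on the bimodule generators $1|u|1$ of $(P^b_{\bullet},\delta^b_{\bullet})$, the existence of the completed homotopy on the undisplayed generators being supplied by the usual comparison-theorem induction together with exactness of the resolution. Your organizational refinements are correct as well: the checks in low degrees and on the uniform families stay in the pure Koszul part since the $f^b_{\bullet}$-terms are attached only to $\omega_{i}$ with $i\geqslant 1$, and the induction on $h^{\rho}_n(1|\alpha_n|1)$ and $h^{\rho}_n(1|\gamma_n|1)$ indeed closes because $d^b_n(1|\alpha_n|1)$ (resp.\ $d^b_n(1|\gamma_n|1)$) involves only $\alpha_{n-1}$ (resp.\ $\gamma_{n-1}$) — an unstated but easily verified premise of your inductive step, given that in $A^!$ the monomial $A^n$ arises only from $A\cdot A^{n-1}$ and $A^{n-1}\cdot A$.
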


\begin{fact}
\label{fact:X3}
Let $\rho = abac \in \Z(A)$. 
Then, there is a self-contracting homotopy $h^{\rho}_{\bullet}$ satisfying \eqref{eq:eta} such that 
\begin{align*}
    h^{\rho}_0(1|\epsilon^!|1)&=-aba|\gamma|1-ab|\alpha|c-a|\beta|ac-1|\alpha|bac,
    \\
    h^{\rho}_1(1|\alpha|1)&=aba|\alpha\beta|1-ab|\alpha_2|b-ba|\beta_2|c+c|\alpha_2|bc+b|\beta_2|ac+b|\alpha\beta|bc-1|\alpha_2|bac-1|\alpha\beta|abc,
    \\
    h^{\rho}_1(1|\beta|1)&=aba|\alpha\gamma|1-2ab|\alpha_2|c-ac|\alpha_2|a-ab|\alpha\beta|a+a|\alpha_2|bc-a|\beta_2|ab-a|\beta_2|bc+c|\beta_2|ac
    \\
    & \phantom{= \;} 
    +a|\alpha\gamma|ac-1|\alpha\gamma|bac,
    \\
    h^{\rho}_1(1|\gamma|1)&=2aba|\gamma_2|1-ba|\alpha\beta|c+b|\alpha_2|bc-a|\gamma_2|ac-c|\alpha\beta|ac-1|\alpha\gamma|abc,
    \\
    h^{\rho}_2(1|\alpha_2|1)&=bac|\alpha_3|1+bc|\beta_3|a-ba|\beta_3|c+ba|\alpha_2\gamma|a-b|\alpha_3|ab-b|\alpha_3|bc+c|\alpha_3|ba+a|\beta_3|ac
    \\
    & \phantom{= \;} 
    +c|\beta_3|bc+a|\alpha_2\gamma|bc+b|\alpha_2\gamma|ba-2|\alpha_3|bac,
    \\
    h^{\rho}_2(1|\beta_2|1)&=abc|\beta_3|1-2ab|\alpha_3|c+ac|\alpha_3|b+ab|\beta_3|a-bc|\beta_3|a+ab|\alpha_2\gamma|b-ba|\alpha_2\gamma|a+b|\alpha_3|ab
    \\
    & \phantom{= \;} 
    +2b|\alpha_3|bc-c|\alpha_3|ba+c|\alpha_3|ac-a|\beta_3|ac-b|\alpha_2\gamma|ba+b|\alpha_2\gamma|ac-1|\alpha_3|bac
    \\
    & \phantom{= \;} 
    -2|\beta_3|abc 
    -1|\alpha_2\gamma|aba,
    \\
    h^{\rho}_2(1|\gamma_2|1)&=-3aba|\gamma_3|1+ba|\beta_3|c-ab|\alpha\beta_2|c-b|\alpha_3|bc-a|\beta_3|ac-c|\beta_3|bc+b|\gamma_3|ac-a|\alpha_2\gamma|bc
    \\
    & \phantom{= \;} 
    +1|\alpha_3|bac-1|\alpha_2\beta|abc,
    \\
    h^{\rho}_2(1|\alpha\beta|1)&=-2aba|\alpha\beta_2|1-ac|\alpha_3|c-bc|\beta_3|b-2ba|\beta_3|a-ab|\alpha_2\gamma|c-ba|\alpha_2\gamma|b-a|\alpha_3|bc
    \\
    & \phantom{= \;} 
    +b|\alpha_3|ba
    +2b|\alpha_3|ac+b|\beta_3|ab-c|\beta_3|ac+a|\gamma_3|ac-c|\gamma_3|ab+c|\alpha_2\beta|ac-a|\alpha_2\gamma|ac
    \\
    & \phantom{= \;} 
    -a|\alpha\beta_2|bc
    -1|\alpha_2\gamma|bac-1|\alpha\beta_2|abc,
    \\
    h^{\rho}_2(1|\alpha\gamma|1)&=-abc|\alpha_3|1-2aba|\alpha_2\beta|1-3ab|\alpha_3|b-ab|\beta_3|c-2bc|\beta_3|c-2ba|\alpha_2\gamma|c-ba|\alpha\beta_2|a
    \\
    & \phantom{= \;} 
    +2a|\alpha_3|ba-c|\alpha_3|bc-b|\beta_3|ac-b|\gamma_3|ab-a|\alpha_2\beta|ab-b|\alpha_2\gamma|bc+c|\alpha_2\gamma|ac
    \\
    & \phantom{= \;} 
    -c|\alpha\beta_2|ab
    -1|\alpha_2\beta|bac-2|\alpha_2\gamma|abc.
\end{align*}
\end{fact}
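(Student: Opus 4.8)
The plan is to prove existence of the full homotopy $h^{\rho}_{\bullet}$ abstractly and then to justify the displayed low-degree formulas by a direct verification of \eqref{eq:eta} with $\partial_{\bullet}=\delta^{b}_{\bullet}$. For existence, since $\rho=abac$ is central the family $\eta_{\bullet}$ lifts the zero endomorphism of $A$ and is null-homotopic, which one realizes by induction on $n$: assuming $h^{\rho}_{0},\dots,h^{\rho}_{n-1}$ built, put $g=\eta_{n}-h^{\rho}_{n-1}\delta^{b}_{n}$; using that $\eta_{\bullet}$ is a chain map, so $\delta^{b}_{n}\eta_{n}=\eta_{n-1}\delta^{b}_{n}$ (because $\delta^{b}_{n}$ is bimodule linear and $\eta_{n}$ is the commutator with $\rho$), together with the inductive relation for $h^{\rho}_{n-1}$ and $\delta^{b}_{n-1}\delta^{b}_{n}=0$, one gets $\delta^{b}_{n}g=0$, hence $\Img(g)\subseteq \Ker\delta^{b}_{n}=\Img\delta^{b}_{n+1}$; projectivity of $P^{b}_{n}$ lifts $g$ through $\delta^{b}_{n+1}$ to the required $h^{\rho}_{n}$. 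This yields a homotopy but no canonical formula, so the explicit values in the statement must be checked separately.

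Before checking, I would record the degree bookkeeping that pins down the shape of $h^{\rho}_{n}$. As $\rho$ has internal degree $4$ and every $\delta^{b}_{m}$ is homogeneous of internal degree $0$, the maps $\eta_{n}$ and hence $h^{\rho}_{n}$ raise internal degree by $4$. The generator $\omega_{i}1|u|1$ of $P^{b}_{n}$ with $u\in(A^{!}_{-(n-4i)})^{*}$ lies in internal degree $n+2i$, while a term $\omega_{j}x|u'|y$ of $h^{\rho}_{n}(1|u|1)$ lies in internal degree $n+1+2j+|x|+|y|$, so that $2j+|x|+|y|=2i+3$. Since $P^{b}_{m}$ acquires an $\omega_{1}$-summand only for $m\geqslant 4$, for $n\in\{0,1,2\}$ the source $P^{b}_{n}$ has only its $i=0$ part and the target $P^{b}_{n+1}$ carries no $\omega_{1}$, forcing $j=0$ and $|x|+|y|=3$; in particular no $\omega$-symbols occur, the relevant differentials are $\delta^{b}_{m}=d^{b}_{m}$ for $m\leqslant 3$, and the lengthy maps $f^{b}_{0}$ of Fact \ref{fact:fb0degree1} never intervene in the verification up to $h^{\rho}_{2}$.

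The verification itself I would run generator by generator: since everything is $A$-bimodule linear, it suffices to check \eqref{eq:eta} on each $1|u|1$ with $u\in \mathcalboondox{B}^{!*}_{n}$ for $n\in\{0,1,2\}$. On such a generator $\eta_{n}(1|u|1)=abac|u|1-1|u|abac$, and since $abac$ is the top-degree element one may use $x\cdot abac=abac\cdot x=0$ for every $x$ of positive degree. Concretely, for $n=0$ I confirm $\eta_{0}=d^{b}_{1}h^{\rho}_{0}$; for $n=1,2$ I compute $d^{b}_{m+1}h^{\rho}_{m}(1|u|1)$, apply $h^{\rho}_{m-1}$ through its bimodule linearity to the output of $\delta^{b}_{m}(1|u|1)$, add the two, and match the result against $abac|u|1-1|u|abac$. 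Once the relations for $n\leqslant 2$ hold, the inductive lifting above extends the chosen $h^{\rho}_{0},h^{\rho}_{1},h^{\rho}_{2}$ to a full homotopy, completing the proof.

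The main obstacle I expect is purely the combinatorial length and the sign bookkeeping of the degree-$2$ identity: $h^{\rho}_{2}$ carries the longest expressions, the differentials $d^{b}_{2}$ and $d^{b}_{3}$ each generate many terms, and collapsing their combination requires repeated use of the defining relations $a^{2}=b^{2}=c^{2}=0$, $ab+bc+ca=0$ and $ba+ac+cb=0$ to rewrite every product in the basis $\mathcalboondox{B}$ so that the massive cancellation down to $abac|u|1-1|u|abac$ becomes manifest. There is no conceptual difficulty beyond this; as already signalled in the text, the computation is lengthy but straightforward.
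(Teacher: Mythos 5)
Your proposal is correct and matches the paper's treatment: the paper asserts this Fact with the remark that its proof is ``a lengthy but straightforward computation,'' i.e.\ precisely the direct verification of \eqref{eq:eta} on the generators $1|u|1$ for $u \in \mathcalboondox{B}^{!*}_{n}$, $n \in \llbracket 0,2 \rrbracket$, with the existence of the full null-homotopy already guaranteed by the comparison-theorem argument built into the statement of Theorem \ref{HH0}. Your added scaffolding (the inductive lifting via projectivity to extend beyond degree $2$, and the internal-degree bookkeeping showing $j=0$, $|x|+|y|=3$, so that no $\omega_{1}$-terms or $f^{b}_{0}$-contributions can appear for $n \leqslant 2$) is sound and consistent with the displayed formulas, but does not change the substance of the argument.
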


Using the previous results together with Theorem \ref{HH0} we obtain the Gerstenhaber bracket between $X_{i}$ for $i \in \llbracket 1, 14 \rrbracket$ and $X_{1},X_{2},X_{3}$.
\begin{prop}
\label{prop:bracket-H0-Xi} 
The Gerstenhaber bracket on $\HH^{\bullet}(A)$ of $X_{i}$ for $i \in \llbracket 1, 14 \rrbracket$ with an element $X_{j}$ for $j \in \llbracket 1, 3 \rrbracket$ is given by 
\begin{equation}
\label{X_i X_1}
    [X_{i},X_{1}] = \begin{cases} 
    -2 X_{1}, &\text{if $i = 8$,}
    \\
    -4 X_{1} (X_{9} + X_{10}), &\text{if $i = 13$,}
    \\
    0, &\text{if $i \in \llbracket 1, 14 \rrbracket \setminus \{ 8, 13\}$,}
\end{cases}
\end{equation}
\begin{equation}
\label{X_i,X_2}
    [X_{i},X_{2}] = \begin{cases} 
    -2 X_{2}, &\text{if $i = 8$,}
    \\
    -4 X_{1} X_{10}, &\text{if $i = 13$,}
    \\
    0, &\text{if $i \in \llbracket 1, 14 \rrbracket \setminus \{ 8, 13\}$,}
\end{cases}
\end{equation}
and 
\begin{equation}
\label{X_i,X_3}
    [X_{i},X_{3}] = \begin{cases} 
    0, &\text{if $i \in \llbracket 1, 7 \rrbracket $,}
    \\
    -4 X_{3}, &\text{if $i = 8$,}
    \\
    -2X_{i-5}, &\text{if $i=9,10$,}
    \\
    2X_{6}, &\text{if $i=11$,}
    \\
    2X_7-X_1X_8+X_2X_8, &\text{if $i = 12$,}
    \\
    -4 X_{3} (X_{9} + X_{10}+X_{11}), &\text{if $i = 13$,}
    \\
    X_{13}-(2/3)X_8(X_9+X_{10}+ X_{11}), &\text{if $i = 14$.}
\end{cases}
\end{equation}
\end{prop}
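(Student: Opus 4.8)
The plan is to apply Theorem \ref{HH0} directly, using the explicit self-contracting homotopies recorded in Facts \ref{fact:X1}, \ref{fact:X2} and \ref{fact:X3}. By Theorem \ref{HH0}, the bracket $[X_i, X_j]$ for $j \in \llbracket 1, 3\rrbracket$ is computed as the cohomology class of $Y_i \circ h^{\rho_j}_{n-1}$, where $\rho_j$ is the central element with $\ell_{\rho_j} i_0 = X_j$ (namely $\rho_1 = ab+ba$, $\rho_2 = ab+bc-ac$, $\rho_3 = abac$) and $Y_i \in \Hom_{A^e}(P^b_n, A)$ is the cocycle corresponding to $X_i$ under the isomorphism \eqref{X Y}. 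Thus for each pair $(i,j)$ the computation reduces to: determine the cohomological degree $n$ of $X_i$; evaluate the composite $Y_i h^{\rho_j}_{n-1}$ on the relevant generators of $P^b_{n-1}$ using the formulas in the appropriate Fact; and finally identify the resulting cocycle in $\Hom_{A^e}(P^b_{n-1},A)$ with a polynomial in the $X_k$ by comparing against the representatives of a basis of $H^{n-1}_m$ from \cite{es zl}, Cor. 5.4.

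Concretely, I would organize the work by the three target brackets. First I would establish the two general vanishing observations already noted in the text: $[X_i, 1] = 0$ since $h^1_\bullet = 0$, and $[X_i, X_j] = 0$ for $i, j \in \llbracket 1, 3\rrbracket$ directly from Definition \ref{Gerstenhaber bracket at chain level} (as $X_1, X_2, X_3$ all live in $\HH^0(A)$, and the bracket of two degree-$0$ classes vanishes since $f \circ_G g = 0$ when $m = 0$ and the symmetric term has the $g(1)$ interpretation yielding nothing new). Then, for each $i \in \llbracket 4, 14\rrbracket$ and each fixed $j$, I would read off $h^{\rho_j}_{n-1}$ on the generators in the support of $Y_i$. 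Since each $Y_i$ is a rather sparse map (determined by \eqref{X Y} from the explicit cocycles $X_4, \dots, X_{14}$), only a few values of the homotopy actually contribute, which keeps each individual composite manageable.

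The structure of the answer suggests where the substance lies. Most entries are $0$, and I expect these to follow quickly once one checks that $Y_i h^{\rho_j}_{n-1}$ is a coboundary or that the relevant homotopy components land outside the support of $Y_i$. The nonzero entries — $[X_8, X_j] = -2X_1, -2X_2, -4X_3$; $[X_{13}, X_j]$; and the $\HH^0$-with-$X_3$ row including the term $2X_7 - X_1 X_8 + X_2 X_8$ and the term $X_{13} - (2/3)X_8(X_9 + X_{10} + X_{11})$ — are the ones requiring genuine identification of the output cocycle as a specific cup-product polynomial. I would verify the scalar coefficients by tracking the internal degree and cohomological degree (which the bracket must respect, by Corollary \ref{cor:gr Hom}, narrowing the possible targets to a short list of basis elements of the correct bidegree) and then matching coefficients on the usual basis. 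I also expect to use the derivation-type identity \eqref{property 2} as a consistency check, since e.g. $X_9, X_{10}, X_{11}, X_{12}$ can be expressed through cup products, and brackets with $X_3$ should be compatible with the Leibniz rule once the generator brackets are known.

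The main obstacle will be the last two rows of \eqref{X_i,X_3}, namely $[X_{12}, X_3]$ and $[X_{14}, X_3]$: here $\rho_3 = abac$ has top internal degree $4$, so the homotopy $h^{abac}_\bullet$ in Fact \ref{fact:X3} is by far the most complicated, the composites $Y_{12} h^{abac}_{1}$ and $Y_{14} h^{abac}_{n-1}$ involve many terms, and the resulting class is not a single basis element but a sum involving a cup product (e.g. $-X_1 X_8 + X_2 X_8$, or $(2/3) X_8(X_9 + X_{10} + X_{11})$). Pinning down these mixed expressions — including the rational coefficient $2/3$, which forces the characteristic hypothesis to matter — will require carefully rewriting the raw cocycle output in the basis of \cite{es zl}, Cor. 5.4 and then recognizing it as a product, rather than simply reading off a coefficient. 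The remaining entries, while laborious, are essentially bookkeeping against the explicit homotopy formulas and the known cohomology basis.
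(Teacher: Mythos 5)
Your overall strategy matches the paper's: identify $\rho_j$ with $\ell_{\rho_j} i_0 = Y_j$, apply Theorem \ref{HH0} via the homotopies of Facts \ref{fact:X1}--\ref{fact:X3} to compute $[Y_i,Y_j] = Y_i h^{\rho_j}_{\mathfrak{h}(Y_i)-1}$, and then rewrite the resulting cocycles in terms of the generators using the cup-product identities from \cite{es zl}, (6.2), and the explicit coboundaries from \cite{es zl}, Subsubsection 5.3.1. Your treatment of the easy vanishings, of $[X_i,X_j]$ for $i,j \in \llbracket 1,3 \rrbracket$ via Definition \ref{Gerstenhaber bracket at chain level}, and of the degree constraints from Corollary \ref{cor:gr Hom} all agree with the paper. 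But there is a genuine gap at the entry you yourself flag as hardest, $[X_{14},X_3]$: your plan is to ``evaluate the composite $Y_i h^{\rho_j}_{n-1}$ \dots{} using the formulas in the appropriate Fact,'' yet Fact \ref{fact:X3} records $h^{abac}_n$ only for $n \in \llbracket 0,2 \rrbracket$. Since $Y_{14}$ has cohomological degree $4$, you need $h^{abac}_3$, which is supplied nowhere, so the composite $Y_{14} h^{abac}_3$ cannot simply be read off, and ``rewriting the raw cocycle output in the basis'' never gets started because there is no raw output to rewrite.

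The paper closes this gap with an argument you would need to supply (or replace by a full, and very lengthy, computation of $h^{abac}_3$). It uses the homotopy equation \eqref{eq:eta} in the form $\delta^b_4 h^{abac}_3 = \eta_3 - h^{abac}_2 \delta^b_3$, computes the right-hand side explicitly on the six generators $1|u|1$, $u \in \mathcalboondox{B}^{!*}_3$, and observes that by degree reasons $h^{abac}_3(1|u|1) = B_u + \omega_1(\lambda^u_1\, a|\epsilon^!|1 + \dots + \lambda^u_6\, 1|\epsilon^!|c)$ with $B_u \in K^b_4$. Since $Y_{14} = \omega^*_1 \epsilon^!|1$ kills the $K^b_4$-component, only the sums $\lambda^u_1+\lambda^u_2$, $\lambda^u_3+\lambda^u_4$, $\lambda^u_5+\lambda^u_6$ matter, and these are extracted without knowing $B_u$: one checks from the explicit $d^b_4$ of \cite{es zl}, Fact 3.1, that in $d^b_4(B)$ certain pairs of coefficients (e.g.\ those of $aba|\gamma_3|a$ and $aba|\alpha_2\gamma|a$) are forced to coincide for any homogeneous $B$ of internal degree $7$, and then compares those coefficients on both sides of \eqref{eq:B lambda X3} using the explicit values of $f^b_0$ in Fact \ref{fact:fb0degree1}. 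This is where the coefficients $1/3$ and $-4/3$ in \eqref{eq:0}, and hence the $(2/3)$ in the final answer, actually come from; your proposal correctly predicts that the rational coefficients are the crux but does not contain the idea that makes them computable. The rest of your plan (including $[X_{12},X_3]$, where $h^{abac}_1$ \emph{is} available, and the identification step via $e^1_{1,3}$ and the $g^2_{j,2}$) is sound and matches the paper.
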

\begin{proof}
Note that $\ell_{ab+ba}i_0=Y_1$, $\ell_{ab+bc-ac}i_0=Y_2$ and $\ell_{abac}i_0=Y_3$. 
Applying Theorem \ref{HH0} together with Facts \ref{fact:X1}, \ref{fact:X2} and \ref{fact:X3}, we get the brackets 
\begin{equation}
\label{X_1-}
    [X_{i},X_{1}] = \begin{cases} 
    -2 X_{1}, &\text{if $i = 8$,}
    \\
    -(\alpha_2+\beta_2+\gamma_2)|(ab+ba)-\alpha\beta|ba-\alpha\gamma|ab, &\text{if $i = 13$,}
    \\
    0, &\text{if $i \in \llbracket 1, 12 \rrbracket \setminus \{ 8\}$,}
\end{cases}
\end{equation}
\begin{equation}
\label{X_2-}
    [X_{i},X_{2}] = \begin{cases} 
    -2 X_{2}, &\text{if $i = 8$,}
    \\
    (\alpha_2+\beta_2+\gamma_2)|(ac-ab-bc)+\alpha\beta|ac-\alpha\gamma|(ab+bc) , &\text{if $i = 13$,}
    \\
    0, &\text{if $i \in \llbracket 1, 12 \rrbracket \setminus \{ 8\}$,}
\end{cases}
\end{equation}
and 
\begin{equation}
\label{X_3-}
    [X_{i},X_{3}] = \begin{cases} 
    0, &\text{if $i \in \llbracket 1, 7 \rrbracket $,}
    \\
    -4 X_{3}, &\text{if $i = 8$,}
    \\
    -2X_{i-5}, &\text{if $i=9,10$,}
    \\
    2X_{6}, &\text{if $i=11$,}
    \\
    \alpha|(aba-abc)-\beta|bac-\gamma|bac , &\text{if $i = 12$,}
    \\
    -4(\alpha_2+\beta_2+\gamma_2)|abac , &\text{if $i = 13$.}
\end{cases}
\end{equation}
Indeed, this was simply done by computing 
$[Y_i, Y_1]=Y_i h^{ab+ba}_{ \mathfrak{h} (Y_i)-1 }$, 
$[Y_i, Y_2]=Y_i h^{ab+bc-ac}_{ \mathfrak{h} (Y_i)-1 }$, and 
$[Y_i, Y_3]=Y_i h^{abac}_{ \mathfrak{h} (Y_i)-1 }$, 
where $\mathfrak{h} (Y_i)$ denotes the cohomological degree of $Y_i$ for $i\in \llbracket 1,13 \rrbracket$, 
and by transport of structures. 
Note that the vanishing of $[X_{i},X_{3}]$ for $i \in \llbracket 4,7 \rrbracket$ also follows from a simple degree argument using Corollary \ref{cor:gr Hom} together with \cite{es zl}, Cor. 5.9. 
The latter two results also tell us that $[X_{14},X_j]=0$ (or $[Y_{14},Y_j]=0$) for $j=1,2$, by degree reasons. 
This result also follows from noting that $h^{ab+ba}_3$ is of internal degree $2$, so $h^{ab+ba}_3(1|u|1)$ is of internal degree $5$ for any $u\in \mathcalboondox{B}^{!*}_3$, which implies that 
$Y_{14}(h^{ab+ba}_3(1|u|1))$ vanishes, since $Y_{14}$ vanishes on any homogeneous element of internal degree strictly less than $6$. 
Hence, $Y_{14}h^{ab+ba}_3=0$. 
We get $Y_{14}h^{ab+bc-ac}_3=0$ for the same reason. 

Next, we compute $\varphi=[Y_{14},Y_3]=Y_{14}h^{abac}_3$. 
By \eqref{eq:eta}, the map $h^{abac}_3 :P^b_3\to P^b_4$ satisfies $\delta^b_4 h^{abac}_3=\eta_3-h^{abac}_2\delta^b_3$.
It is easy to check that 
\allowdisplaybreaks
\begin{align*}
    (\eta_3-h^{abac}_2\delta^b_3)(1|\alpha_3|1)&=-bac|\alpha_3|a+abc|\beta_3|a-aba|\beta_3|c+aba|\alpha_2\gamma|a+v_{\alpha_3},
    \\
    (\eta_3-h^{abac}_2\delta^b_3)(1|\beta_3|1)&=-2aba|\alpha_3|c+bac|\alpha_3|b+aba|\beta_3|a-abc|\beta_3|b+aba|\alpha_2\gamma|b+v_{\beta_3},
    \\
    (\eta_3-h^{abac}_2\delta^b_3)(1|\gamma_3|1)&=abc|\beta_3|c+3aba|\gamma_3|c-bac|\alpha\beta_2|c+v_{\gamma_3},
    \\
    (\eta_3-h^{abac}_2\delta^b_3)(1|\alpha_2\beta|1)&=abc|\alpha_3|a-4bac|\alpha_3|b-2aba|\beta_3|a-abc|\beta_3|b+bac|\beta_3|c+3aba|\gamma_3|b
    \\
    & \phantom{= \;} 
    +2aba|\alpha_2\beta|a-aba|\alpha_2\gamma|b-2abc|\alpha_2\gamma|c-abc|\alpha\beta_2|a+aba|\alpha\beta_2|c
    \\
    & \phantom{= \;} 
    +v_{\alpha_2\beta},
    \stepcounter{equation}\tag{\theequation}\label{eq:auxiliary-eta-hdelta}
    \\
    (\eta_3-h^{abac}_2\delta^b_3)(1|\alpha_2\gamma|1)&=-4bac|\alpha_3|c+bac|\beta_3|a-4abc|\beta_3|c+2aba|\alpha_2\beta|b+bac|\alpha_2\gamma|b
    \\
    & \phantom{= \;} 
    -3aba|\alpha_2\gamma|c+aba|\alpha\beta_2|a+v_{\alpha_2\gamma},
    \\
    (\eta_3-h^{abac}_2\delta^b_3)(1|\alpha\beta_2|1)&=-3aba|\alpha_3|b+2abc|\alpha_3|c-4abc|\beta_3|a+bac|\beta_3|b+3aba|\gamma_3|a
    \\
    & \phantom{= \;} 
    +2aba|\alpha_2\beta|c-aba|\alpha_2\gamma|a-abc|\alpha_2\gamma|b-bac|\alpha_2\gamma|c+2aba|\alpha\beta_2|b
    \\
    & \phantom{= \;} 
    +v_{\alpha\beta_2},
\end{align*}
where $v_{u} \in \oplus_{j\in \llbracket0,4 \rrbracket\setminus \{ 3 \} } (A_j\otimes (A^!_{-3})^{*}\otimes A_{4-j})$ for $u\in \mathcalboondox{B}^{!*}_3$.
By degree reasons, the element $h^{abac}_3(1|u|1)$ for $u\in \mathcalboondox{B}^{!*}_3$ is of the form  
\[  
h^{abac}_3(1|u|1)=B_u+\omega_1 (\lambda^u_1 a|\epsilon^!|1+\lambda^u_2 1|\epsilon^!|a+\lambda^u_3b|\epsilon^!|1+\lambda_4^u 1|\epsilon^!|b +\lambda^u_5 c|\epsilon^!|1+\lambda^u_6 1|\epsilon^!|c ), 
\]
where $B_u\in K^b_4$ has internal degree $7$  
and $\lambda^u_i\in \Bbbk$ for $i\in \llbracket 1,6 \rrbracket$. 
Therefore, 
\begin{equation}
\label{eq:B lambda X3}
    \begin{split}
       (\eta_3-h^{abac}_2\delta^b_3)(1|u|1)
       &=\delta^b_4 h^{abac}_3(1|u|1)
       \\
       &=d^b_4(B_u)+\lambda^u_1 f^b_0(a|\epsilon^!|1)+\lambda^u_2 f^b_0(1|\epsilon^!|a)+\lambda^u_3 f^b_0(b|\epsilon^!|1)+\lambda^u_4 f^b_0(1|\epsilon^!|b)
       \\
       & \phantom{= \;} 
       +\lambda^u_5 f^b_0(c|\epsilon^!|1)+\lambda^u_6 f^b_0(1|\epsilon^!|c).  
    \end{split}
\end{equation}
Using the explicit expression of the differential $d^b_4$ given in \cite{es zl}, Fact 3.1, together with an elementary computation we see that, given any homogeneous element $B \in K_{4}^{b}$ of internal degree $7$, the coefficients of $aba|\gamma_3|a$ and $aba|\alpha_2\gamma|a$ in $d^b_4(B)$ are equal, 
the coefficients of $aba|\gamma_3|b$ and $aba|\alpha_2\gamma|b$ in $d^b_4(B)$ coincide, 
and the coefficients of $abc|\beta_3|c$ and $abc|\alpha_2\beta|c$ in $d^b_4(B)$ are also the same.
Comparing the coefficients of $aba|\gamma_3|a$ and $aba|\alpha_2\gamma|a$ in both sides of the equation \eqref{eq:B lambda X3}, where the left member is explicitly given by \eqref{eq:auxiliary-eta-hdelta} and the right member is computed using Fact \ref{fact:fb0degree1}, 
we obtain 
\[ \lambda^{\alpha_3}_1+\lambda^{\alpha_3}_2=1/3, \quad
 \lambda^{\alpha\beta_2}_1+\lambda^{\alpha\beta_2}_2=-4/3, \quad 
\lambda^{u}_1+\lambda^{u}_2=0   \]  
for $u\in \mathcalboondox{B}^{!*}_3 \setminus \{ \alpha_3,\alpha\beta_2  \} $. 
Similarly, comparing the coefficients of $aba|\gamma_3|b$ and $aba|\alpha_2\gamma|b$ in both sides of the equation \eqref{eq:B lambda X3}, where the left member is explicitly given by \eqref{eq:auxiliary-eta-hdelta} and the right member is computed using Fact \ref{fact:fb0degree1},
we obtain 
\[  
\lambda^{\beta_3}_3+\lambda^{\beta_3}_4=1/3, \quad 
\lambda^{\alpha_2\beta}_3+\lambda^{\alpha_2\beta}_4=-4/3, \quad 
\lambda^{u}_3+\lambda^{u}_4=0
\]  
for $u\in \mathcalboondox{B}^{!*}_3 \setminus \{ \beta_3,\alpha_2\beta  \} $. 
Comparing the coefficients of $abc|\beta_3|c$ and $abc|\alpha_2\beta|c$ in both sides of the equation \eqref{eq:B lambda X3}, where the left member is explicitly given by \eqref{eq:auxiliary-eta-hdelta} and the right member is computed using Fact \ref{fact:fb0degree1},
we obtain 
\[  
\lambda^{\gamma_3}_5+\lambda^{\gamma_3}_6=1/3, \quad 
\lambda^{\alpha_2\gamma}_5+\lambda^{\alpha_2\gamma}_6=-4/3, \quad \lambda^{u}_5+\lambda^{u}_6=0
\]  
for $u\in \mathcalboondox{B}^{!*}_3 \setminus \{ \gamma_3,\alpha_2\gamma  \} $. 
Then $\varphi(1|u|1)=Y_{14}h^{abac}_3(1|u|1)$ for $u\in \mathcalboondox{B}^{!*}_3 $ is given by 
\begin{equation}\label{eq:0}
    \begin{aligned}
        \varphi(1|\alpha_3|1)&=(1/3)a, \quad &
        \varphi (1|\beta_3|1)&=(1/3)b, \quad &
        \varphi (1|\gamma_3|1)&=(1/3)c, 
        \\
        \varphi (1|\alpha_2\beta|1)&=-(4/3)b, \quad &
        \varphi (1|\alpha_2\gamma|1)&=-(4/3)c, \quad &
        \varphi (1|\alpha\beta_2|1)&= -(4/3)a.
    \end{aligned}
\end{equation}
Hence, 
$[X_{14}, X_3]=(1/3)(\alpha_3|a+\beta_3|b+\gamma_3|c)-(4/3)(\alpha_2\beta|b+\alpha_2\gamma|c+\alpha\beta_2|a)$.

We now note the following identities, 
\begin{equation}
\label{eq:some cup products1}
\begin{aligned}
    \alpha_2|(ab+ba)&=X_1X_9, \quad &
    \beta_2|(ab+ba)&=X_1X_{10}, \quad &
    \alpha|aba+\beta|bac&=(1/2)(X_1X_8-X_2X_8), 
    \\
    \alpha_2|abac &=X_3X_9, \quad & 
    \beta_2|abac &=X_3X_{10}, \quad & 
    \gamma_2|abac &=X_3X_{11}
    \end{aligned}
\end{equation}
and 
\begin{equation}
\label{eq:some cup products2}
\begin{split}
    (\alpha_3-\alpha\beta_2)|a &=(1/2)\{ X_{13}+X_8(X_9-X_{10}-X_{11})\}, 
    \\
    (\beta_3-\alpha_2\beta)|b& =(1/2)\{ X_{13}+X_8(X_{10}-X_{9}-X_{11}) \}, 
    \\
    (\gamma_3-\alpha_2\gamma)|c &=(1/2)\{ X_{13}+X_8(X_{11}-X_{9}-X_{10})\},  
    \end{split}
\end{equation}
given in \cite{es zl}, (6.2).  
Using the previous equalities as well as the coboundaries 
$g^2_{j,2}\in \tilde{\mathfrak{B}}^2_2$ for $j\in\llbracket 1,8 \rrbracket \setminus \{ 4,5 \}$ and $e^1_{1,3}=\alpha|(aba+abc)+(\beta-\gamma)|bac \in  \tilde{\mathfrak{B}}^1_3$ of the sets $\tilde{\mathfrak{B}}^2_2$ and $\tilde{\mathfrak{B}}^1_3$ given in \cite{es zl}, Subsubsection 5.3.1, we get
\begin{align*}
    [X_{13},X_1]
    &=-(\alpha_2+\beta_2+\gamma_2)|(ab+ba)-\alpha\beta|ba-\alpha\gamma|ab-3g^2_{1,2}-3g^2_{2,2}-2g^2_{3,2}+g^2_{8,2}
    \\
    &=-4(\alpha_2+\beta_2)|(ab+ba)
    =-4X_1(X_9+X_{10}),
    \\
    [X_{13},X_2]
    &=(\alpha_2+\beta_2+\gamma_2)|(ac-ab-bc)+\alpha\beta|ac-\alpha\gamma|(ab+bc) -g^2_{1,2}-2g^2_{2,2}-g^2_{3,2}+g^2_{6,2}
    \\
    & \phantom{= \;} 
    -g^2_{7,2}
    +g^2_{8,2}
    \\
    &=-4\beta_2|(ab+ba) 
    =-4X_1X_{10}, 
    \\
    [X_{12},X_3]&=\alpha|(aba-abc)-\beta|bac-\gamma|bac-e^1_{1,3}
    =2\alpha|(aba-abc)-2(\alpha|aba+\beta|bac)
    \\
    &=2X_7-X_1X_8+X_2X_8,   
    \\
    [X_{13},X_3]&= -4(\alpha_2+\beta_2+\gamma_2)|abac
    =-4X_3(X_9+X_{10}+X_{11}), 
    \\
    [X_{14},X_3]&=(1/3)(\alpha_3|a+\beta_3|b+\gamma_3|c)-(4/3)(\alpha_2\beta|b+\alpha_2\gamma|c+\alpha\beta_2|a)
    \\
    &=X_{13}-(2/3)X_8(X_9+X_{10}+ X_{11}).
\end{align*}
The proposition is thus proved. 
\end{proof}

\subsection{\texorpdfstring{Gerstenhaber brackets of $\HH^1(A)$ with $\HH^n(A)$}{Gerstenhaber brackets of HH1(A) with HHn(A)}}
\label{subsection:Gerstenhaber brackets of HH^1(A)} 

In this subsection we are going to utilize the method recalled in Subsection \ref{subsection:Gerstenhaber brackets of HH^1} to compute the Gerstenhaber bracket of $X_{i}$ for $i \in \llbracket 4, 8 \rrbracket$ with the elements $X_{j}$ for $j \in \llbracket 1, 14 \rrbracket$. 

Let $\rho: A\to A$ be a derivation of $A$.
By \cite{Mariano}, Lemma 1.3, 
the $\rho^e$-lifting $\rho_{\bullet}=\{ \rho_n :P^b_n\to P^b_n\}_{n\in\NN_0}$ of $\rho$ to $(P^b_{\bullet},\delta^b_{\bullet})$ 
exists, and it can be chosen in such a way that
\begin{equation}
\label{eq:lifting}
    \begin{split}
        \rho_0(x|\epsilon^!|y)=\rho(x)|\epsilon^!|y+x|\epsilon^!|\rho(y) \text{ and }
        \rho_n(\omega_ix|u|y)=xq_{\omega_iu}y+\omega_i\rho(x)|u|y+\omega_ix|u|\rho(y) ,
    \end{split}
\end{equation}
for all $x,y\in A$, $n\in\NN$, $i\in \llbracket 0,\lfloor n/4 \rfloor \rrbracket$ and $u\in  \mathcalboondox{B}^{!*}_{n-4i}$, where $q_{\omega_iu}\in P^b_n$ satisfies that $\delta^b_n(q_{\omega_i u})=\rho_{n-1}\delta^b_n(\omega_i1|u|1)$.
To reduce space, we will usually write $q_{u}$ instead of $q_{\omega_0 u}$. 
As recalled in Subsection \ref{subsection:Gerstenhaber brackets of HH^1}, given $\phi\in \HH^n(A)$, the Gerstenhaber bracket $[G(\rho)i_1,\phi]\in \HH^n(A)$ is given by the cohomology class of $\rho\phi-\phi\rho_n $. 

In what follows, we consider a set of derivations of $A$ whose classes give a basis of $\operatorname{HH}^{1}(A)$ and for each of them we will provide some of the corresponding elements $q_{\omega_i u}$ satisfying \eqref{eq:lifting}. 
Then, we shall compute the respective Gerstenhaber brackets by means of Theorem \ref{thm:HH1 HHn}. 

The proof of the following result follows immediately from the statement. 
\begin{prop}
\label{prop:X_8}
Let $\rho:A\to A$ be the derivation of $A$ defined by $\rho(x)= \deg(x)x$ for $x\in \mathcalboondox{B}$.
Then $\rho_{\bullet}$ defined by $\rho_n(\omega_i x|u|y)=(\deg(x)+\deg(y)+n+2i)\omega_ix|u|y$ for $x,y\in \mathcalboondox{B} $, $i\in \llbracket 0,\lfloor n/4 \rfloor \rrbracket$, $u\in  \mathcalboondox{B}^{!*}_{n-4i}$ and $n\in\NN_0$ is a $\rho^e$-lifting of $\rho$.
Note that $\deg(x)+\deg(y)+n+2i$ is the internal degree of $\omega_i x|u|y$.
Since $G(\rho)i_1=-X_8$, 
the Gerstenhaber bracket $[X_8,\phi]\in\HH^n(A)$ for $\phi\in \HH^n(A)$ is given by the cohomology class  $-\mathfrak{a}(\phi)\phi$, where $\mathfrak{a}(\phi)$ is the internal degree of $\phi$. 
Hence, 
\begin{equation}
\label{eq:X8 Xn}
[X_{8},X_{j}] = \begin{cases} 
    -2X_j, &\text{if $j \in \llbracket 1, 7 \rrbracket \setminus \{ 3 \}$,}
    \\
    -4X_{3}, &\text{if $j = 3$,}
    \\
    0 , &\text{if $j = 8$,}
    \\
    2X_{j}, &\text{if $j \in \llbracket 9, 13 \rrbracket $,}
    \\
    6X_{14}, &\text{if $j = 14$.}
\end{cases}
\end{equation}
\end{prop}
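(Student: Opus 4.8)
The plan is to recognise $\rho$ as the Euler (grading) derivation of $A$ and to check that the proposed $\rho_{\bullet}$ is nothing but the internal-degree operator on the resolution $P^b_{\bullet}$; once this is in place, the bracket formula follows mechanically from Theorem \ref{thm:HH1 HHn}. First I would observe that, by Proposition \ref{pro:bimodule projective resolution}, the element $\omega_i x|u|y$ is homogeneous of internal degree $\deg(x)+\deg(y)+(n-4i)+6i=\deg(x)+\deg(y)+n+2i$, so the stated map $\rho_n$ is precisely multiplication of a homogeneous element of $P^b_n$ by its internal degree. To see that $\rho_{\bullet}$ is a $\rho^e$-lifting I would verify the three defining conditions: (i) each $\rho_n$ is a $\rho^e$-operator, which reduces to the identity $\rho^e(x'\otimes y')=(\deg(x')+\deg(y'))(x'\otimes y')$ on homogeneous elements together with the additivity of internal degrees under the bimodule action $x'(\omega_i x|u|y)y'=\omega_i x'x|u|yy'$; (ii) $\mu\rho_0=\rho\mu$, since $\mu$ preserves internal degree and $\rho$ is multiplication by internal degree on $A$; and (iii) $\delta^b_n\rho_n=\rho_{n-1}\delta^b_n$, which holds because $\delta^b_{\bullet}$ is homogeneous of internal degree $0$ and hence commutes with multiplication by the internal degree. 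Equivalently, this is the instance of the general scheme \eqref{eq:lifting} obtained by taking $q_{\omega_i u}=(n+2i)\,\omega_i 1|u|1$, whose defining relation $\delta^b_n(q_{\omega_i u})=\rho_{n-1}\delta^b_n(\omega_i 1|u|1)$ holds again by internal-degree preservation.

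Next I would pin down the sign $G(\rho)i_1=-X_8$. Using $\rho(a)=a$, $\rho(b)=b$, $\rho(c)=c$ together with the explicit first component $i_1$ from \eqref{eq:morphism}, one computes $G(\rho)i_1(1|\alpha|1)=G(\rho)(-1|a|1)=-\rho(a)=-a$, and likewise $-b$ and $-c$ on $1|\beta|1$ and $1|\gamma|1$; comparing with $Y_8$ gives $G(\rho)i_1=-X_8$ at the cochain level. Theorem \ref{thm:HH1 HHn} then identifies $[G(\rho)i_1,\phi]$ with the class of $\rho^{\sharp}_{n,P^b_{\bullet}}(\phi)=\rho\phi-\phi\rho_n$. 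For a homogeneous cocycle $\phi$ and a homogeneous element $v\in P^b_n$ one has $\rho\phi(v)=\deg(\phi(v))\,\phi(v)$ and $\phi\rho_n(v)=d\,\phi(v)$, where $d$ is the internal degree of $v$; hence $(\rho\phi-\phi\rho_n)(v)$ equals $\phi(v)$ times the difference between the internal degree of $\phi(v)$ and $d$, which is precisely the internal degree $\mathfrak{a}(\phi)$ of the homogeneous morphism $\phi$. Thus $\rho\phi-\phi\rho_n=\mathfrak{a}(\phi)\,\phi$, and by bilinearity of the bracket together with $G(\rho)i_1=-X_8$ we obtain $[X_8,\phi]=-\mathfrak{a}(\phi)\,\phi$ in $\HH^n(A)$.

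Finally I would read off \eqref{eq:X8 Xn} by tabulating the internal degrees of the fourteen generators, using that the internal degree of a representative $y|x$ is $\deg(x)$ minus the internal degree of the dual symbol $y$, shifted by $-6i$ in the presence of a factor $\omega^*_i$. This gives $\mathfrak{a}(X_j)=2$ for $j\in\llbracket 1,7\rrbracket\setminus\{3\}$, $\mathfrak{a}(X_3)=4$, $\mathfrak{a}(X_8)=0$, $\mathfrak{a}(X_j)=-2$ for $j\in\llbracket 9,13\rrbracket$ and $\mathfrak{a}(X_{14})=-6$, and substituting into $[X_8,\phi]=-\mathfrak{a}(\phi)\,\phi$ produces exactly the table. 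There is no serious obstacle here: the whole argument is verification, and the only point requiring genuine care is the bookkeeping of conventions — confirming the minus sign introduced by $i_1$ so that $G(\rho)i_1=-X_8$ rather than $+X_8$, and checking that the internal grading (including the $\omega^*_i$ shift by $-6i$) is normalised so that $\rho\phi-\phi\rho_n$ acts as multiplication by $\mathfrak{a}(\phi)$ with the correct overall sign.
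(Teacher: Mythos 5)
Your proposal is correct and takes essentially the same approach as the paper, which declares the proof to ``follow immediately from the statement'': your verification that $\rho_{\bullet}$ is multiplication by internal degree (hence a $\rho^{e}$-lifting, equivalently the instance $q_{\omega_i u}=(n+2i)\,\omega_i 1|u|1$ of \eqref{eq:lifting}), the sign computation $G(\rho)i_1=-X_8$ from \eqref{eq:morphism}, and the identity $\rho\phi-\phi\rho_n=\mathfrak{a}(\phi)\phi$ on homogeneous cocycles via Theorem \ref{thm:HH1 HHn} is exactly the bookkeeping the paper leaves implicit. Your tabulated internal degrees of the fourteen generators (including $\mathfrak{a}(X_3)=4$ and $\mathfrak{a}(X_{14})=-6$ from the $\omega^*_1$ shift) all agree with \eqref{eq:X8 Xn}.
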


The proof of Facts \ref{fact:-X4}, \ref{fact:-X5}, \ref{fact:-X6} and \ref{fact:-X7} below is a lengthy but straightforward computation. 

\begin{fact}
\label{fact:-X4}
Let $\rho=\rho^{4} : A\to A$ be the derivation of $A$ defined by $\rho^{4}(a)=bac$ and $\rho^{4}(x)=0$ for $x\in \mathcalboondox{B}\setminus \{a\} $.  
Then the elements $q_{\omega_iu}=q_{\omega_iu}^{4} \in P^b_n$ in \eqref{eq:lifting} can be chosen as follows. 
First, $q_{\beta_n}^{4}=q_{\gamma_n}^{4}=0$ for $n\in\NN$. 
Moreover, 
\begin{align*}
    q_{\alpha}^{4}&=ba|\gamma|1+b|\alpha|c+1|\beta|ac,
    \\
    q_{\alpha_2}^{4}&=ba|\alpha\beta|1-b|\alpha_2|b-c|\alpha_2|c-b|\alpha\beta|c+1|\alpha\beta|ac,
    \\
    q_{\alpha\beta}^{4}&=ab|\gamma_2|1-ab|\alpha_2|1+ba|\alpha\gamma|1-2b|\alpha_2|c-c|\alpha_2|a-b|\beta_2|c-a|\alpha\beta|c-b|\alpha\beta|a
     +1|\alpha_2|bc
     \\
     & \phantom{= \;} 
     -1|\beta_2|ab+1|\alpha\gamma|ac,
    \\
    q_{\alpha\gamma}^{4}&=ba|\gamma_2|1+1|\beta_2|ac,
    \\
    q_{\alpha_3}^{4}&=bc|\beta_3|1+ba|\alpha_2\gamma|1+b|\alpha_3|c+c|\alpha_3|b+b|\beta_3|a-c|\beta_3|c-a|\alpha_2\gamma|c+b|\alpha_2\gamma|b
     -1|\beta_3|ac
     \\
     & \phantom{= \;} 
     -1|\alpha_2\gamma|bc,
    \\
    q_{\alpha_2\beta}^{4}&=ab|\gamma_3|1+ba|\alpha\beta_2|1-a|\alpha_3|b-2a|\beta_3|c-c|\beta_3|a-b|\gamma_3|c-c|\gamma_3|b-a|\alpha_2\beta|c
     -c|\alpha\beta_2|c 
     \\
     & \phantom{= \;} 
     -b|\alpha_2\gamma|c-a|\alpha_2\gamma|a+1|\alpha_3|bc+1|\gamma_3|ba-1|\alpha_2\gamma|ba-1|\alpha\beta_2|bc,
    \\
    q_{\alpha_2\gamma}^{4}&=ba|\alpha_2\beta|1+ab|\alpha\beta_2|1-2ab|\alpha_3|1-ba|\beta_3|1+a|\alpha_3|c+2b|\alpha_3|b+c|\alpha_3|a+a|\beta_3|a
     +b|\beta_3|c
     \\
     & \phantom{= \;}
     +c|\beta_3|b+a|\alpha_2\gamma|b+a|\alpha\beta_2|c-1|\alpha_3|ba-2|\beta_3|ab+1|\alpha\beta_2|ac, 
    \\
    q_{\alpha\beta_2}^{4}&=3ba|\gamma_3|1-(ab+bc)|\beta_3|1+(ab+bc)|\alpha_2\beta|1-3c|\alpha_3|b-2a|\beta_3|b-b|\beta_3|a-c|\beta_3|c
    \\
     & \phantom{= \;}
     -2a|\gamma_3|c
     -2c|\gamma_3|a-a|\alpha_2\beta|b-2c|\alpha_2\beta|c-2b|\alpha_2\gamma|b-c|\alpha_2\gamma|a-c|\alpha\beta_2|b+2|\beta_3|ac
     \\
     & \phantom{= \;}
     +1|\gamma_3|ab
     -1|\alpha_2\gamma|ab,
     \\
    q_{\omega_1 \epsilon^!}^{4}&=bac|\alpha_4|a+4abc|\beta_4|b-aba|\gamma_4|c+4ab|\alpha_3\beta|bc-4bc|\alpha_3\beta|ab+2(ba+ac)|\alpha_3\gamma|ac
    \\
     & \phantom{= \;}
    +2ba|\alpha_3\gamma|ba
    +ab|\alpha_2\beta_2|ac+bc|\alpha_2\beta_2|ba-2ba|\alpha_2\beta_2|(ab+bc) -4ab|\alpha_4|ba-2bc|\alpha_4|ba
    \\
     & \phantom{= \;}
    +(ba+ac)|\alpha_4|bc+4ab|\alpha_4|ac-2(ba+ac)|\alpha_4|ab+8bc|\beta_4|ac-10ba|\beta_4|ab-2ac|\beta_4|ab
    \\
     & \phantom{= \;}+6ab|\beta_4|ac+4(ab+bc)|\beta_4|ba-2ab|\gamma_4|ba-4bc|\gamma_4|ba+6ab|\gamma_4|ac-5ba|\gamma_4|ab
     \\
     & \phantom{= \;}
     +4ba|\gamma_4|bc
     -6ac|\gamma_4|ab+a|\alpha_4|bac+4b|\beta_4|abc-c|\gamma_4|aba-\omega_1 c|\epsilon^!|c.
 \end{align*}
\end{fact}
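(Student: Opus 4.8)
The plan is to prove the Fact by direct verification. By the defining property \eqref{eq:lifting} of a $\rho^e$-lifting, establishing the statement amounts to checking, for every basis element $u \in \mathcalboondox{B}^{!*}_{n-4i}$ that occurs, the single identity
\[ \delta^b_n(q^4_{\omega_i u}) = \rho^4_{n-1}\,\delta^b_n(\omega_i 1|u|1). \]
The existence of \emph{some} element $q^4_{\omega_i u}$ is automatic: the right-hand side is a $\delta^b_{n-1}$-cycle, since $\delta^b_{n-1}\rho^4_{n-1} = \rho^4_{n-2}\delta^b_{n-1}$ by the lifting property already available at lower degrees and $\delta^b_{n-1}\delta^b_n = 0$, hence it is a boundary by exactness of $(P^b_{\bullet}, \delta^b_{\bullet})$. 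The content of the Fact is therefore that the explicit elements displayed do the job, and the proof is exactly the computation confirming the identity above.

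First I would set up an induction on the homological degree $n$, handling within each $n$ the summands indexed by $i \in \llbracket 0, \lfloor n/4 \rfloor \rrbracket$ in increasing order. At each step $\rho^4_{n-1}$ is completely determined by \eqref{eq:lifting} together with the lower-degree elements $q^4_{\omega_j v}$, which are known by the inductive hypothesis. Concretely, I would expand $\delta^b_n(\omega_i 1|u|1)$ into a $\Bbbk$-combination of elements $\omega_j x|v|y$ using the explicit differential, whose Koszul part is $d^b$ from \cite{es zl}, Fact 3.1, and whose connecting maps $f^b$ come from \cite{es zl}, (3.2) (the values of $f^b_0$ needed here being recorded in Fact \ref{fact:fb0degree1}), and then apply $\rho^4_{n-1}$ termwise via $\rho^4_{n-1}(\omega_j x|v|y) = x\,q^4_{\omega_j v}\,y + \omega_j \rho^4(x)|v|y + \omega_j x|v|\rho^4(y)$. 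Here $\rho^4$ is extended to all of $A$ as a derivation, so it is nonzero only on monomials containing the generator $a$, each occurrence contributing a factor $bac$, after which every product is rewritten in the basis $\mathcalboondox{B}$. Computing $\delta^b_n$ of the claimed $q^4_{\omega_i u}$ in the same basis and matching coefficients then closes each case.

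The vanishing assertions $q^4_{\beta_n} = q^4_{\gamma_n} = 0$ are the quickest to dispose of: they reduce to showing $\rho^4_{n-1}\delta^b_n(1|\beta_n|1) = 0$ and $\rho^4_{n-1}\delta^b_n(1|\gamma_n|1) = 0$, which follows by tracking how the support of $\rho^4$, concentrated on the generator $a$, meets the terms produced by $\delta^b_n$ on $\beta_n$ and $\gamma_n$, the surviving contributions cancelling. I would then carry out the remaining cases in order of increasing internal complexity: first $q^4_{\alpha}$, then the degree-$2$ elements $q^4_{\alpha_2}, q^4_{\alpha\beta}, q^4_{\alpha\gamma}$, then the degree-$3$ elements $q^4_{\alpha_3}, q^4_{\alpha_2\beta}, q^4_{\alpha_2\gamma}, q^4_{\alpha\beta_2}$, and finally $q^4_{\omega_1 \epsilon^!}$.

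The hard part will be this last element $q^4_{\omega_1 \epsilon^!} \in P^b_4$. Here $\delta^b_4$ couples the Koszul differential $d^b_4$ with the connecting map $f^b$ across the two summands $\omega_1 K^b_0$ and $\omega_0 K^b_4$, so that the verification simultaneously involves $d^b_4$ applied to a large element of $K^b_4$ of internal degree $8$, together with the relevant values of $f^b_0$ on the $\omega_1$-part; on the other side one must compute $\rho^4_3 f^b_0(1|\epsilon^!|1)$. Matching the dozens of resulting basis terms, with the correct Koszul signs and after reducing every product to $\mathcalboondox{B}$, is where essentially all of the bookkeeping, and all of the risk of error, is concentrated. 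No new idea is required beyond the organization above: the difficulty is purely the scale of the computation.
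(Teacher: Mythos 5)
Your proposal is correct and matches the paper's approach: the paper dispatches this Fact with the single remark that its proof ``is a lengthy but straightforward computation,'' namely exactly the verification you describe, checking $\delta^b_n(q^4_{\omega_i u})=\rho^4_{n-1}\delta^b_n(\omega_i 1|u|1)$ term by term using the explicit $d^b$ and $f^b$ from \cite{es zl}. Your additional observations --- that existence of some $q^4_{\omega_i u}$ is automatic by exactness once the lower-degree lifting is in place, and that the $q^4_{\omega_1\epsilon^!}$ case is the delicate one because $\delta^b_4$ mixes $d^b_4$ with $f^b_0$ --- are accurate refinements of the same argument, not a different route.
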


\begin{fact}
\label{fact:-X5}
Let $\rho=\rho^{5} : A\to A$ be the derivation of $A$ defined by $\rho^{5}(b)=abc$ and $\rho^{5}(x)=0$ for $x\in \mathcalboondox{B}\setminus \{b\} $.
Then the elements $q_{\omega_iu}=q_{\omega_iu}^{5} \in P^b_n$ in \eqref{eq:lifting} can be chosen as follows. 
First, $q_{\alpha_n}^{5}=q_{\gamma_n}^{5}=0$ for $n\in\NN$. 
Moreover, 
\begin{align*}
    q_{\beta}^{5}&=ab|\gamma|1+a|\beta|c+1|\alpha|bc,
    \\
    q_{\beta_2}^{5}&=ab|\alpha\gamma|1-a|\beta_2|a-c|\beta_2|c-a|\alpha\gamma|c+1|\alpha\gamma|bc,
    \\
    q_{\alpha\beta}^{5}&=ab|\gamma_2|1+1|\alpha_2|bc, 
    \\
    q_{\alpha\gamma}^{5}&=ab|\alpha\beta|1-bc|\alpha\beta|1-2ba|\beta_2|1-a|\alpha_2|c-2a|\beta_2|c+b|\beta_2|a-c|\beta_2|b+b|\gamma_2|a+b|\alpha\beta|b
    \\
    & \phantom{= \;}
    -a|\alpha\gamma|b
    +1|\beta_2|ac-1|\alpha_2|ba+1|\alpha\beta|bc,
    \\
    q_{\beta_3}^{5}&=(ba+ac)|\alpha_3|1+ab|\alpha_2\gamma|1+a|\alpha_3|b+b|\alpha_3|a+2a|\beta_3|c+c|\beta_3|a+b|\gamma_3|c+c|\gamma_3|b+a|\alpha_2\beta|c
    \\
    & \phantom{= \;}
    +a|\alpha_2\gamma|a+c|\alpha\beta_2|c-1|\gamma_3|ba+1|\alpha\beta_2|bc,
    \\
    q_{\alpha_2\beta}^{5}&=2ab|\gamma_3|1-2(ba+ac)|\alpha_3|1-(ab+bc)|\alpha_2\gamma|1+(ba+ac)|\alpha\beta_2|1-a|\alpha_3|b-2b|\alpha_3|a
    \\
    & \phantom{= \;}
    -2c|\beta_3|a
    -b|\gamma_3|c-c|\gamma_3|b-2a|\alpha_2\gamma|a-c|\alpha\beta_2|c+2|\alpha_3|bc+1|\gamma_3|ba-1|\alpha_2\gamma|ba,
    \\
    q_{\alpha_2\gamma}^{5}&=ba|\alpha_2\beta|1-2ba|\beta_3|1-ab|\alpha_3|1+a|\alpha_3|c+b|\alpha_3|b+c|\alpha_3|a+2a|\beta_3|a+b|\beta_3|c+c|\beta_3|b
    \\
    & \phantom{= \;}
    +b|\alpha_2\beta|c
    +b|\alpha_2\gamma|a+b|\alpha\beta_2|b-2|\alpha_3|ba-1|\beta_3|ab+1|\alpha_2\beta|bc,
    \\
    q_{\alpha\beta_2}^{5}&=ba|\gamma_3|1+ab|\alpha_2\beta|1-b|\alpha_3|c-c|\alpha_3|b-b|\beta_3|a+c|\beta_3|c-b|\alpha_2\gamma|b+b|\alpha\beta_2|c+2|\beta_3|ac
    \\
    & \phantom{= \;}
    -1|\alpha_2\gamma|ab
    +1|\alpha_2\gamma|bc,
    \\
    q_{\omega_1\epsilon^!}^{5}&=4bac|\alpha_4|a+abc|\beta_4|b-aba|\gamma_4|c+ab|\alpha_3\beta|ab+5ab|\alpha_3\beta|bc-4bc|\alpha_3\beta|ab
    \\
    & \phantom{= \;}
    +2(ba+ac)|\alpha_3\gamma|ac
    +2ba|\alpha_3\gamma|ba-ac|\alpha_3\gamma|ba-ab|\alpha_2\beta_2|ba+2bc|\alpha_2\beta_2|ba-2ba|\alpha_2\beta_2|ab
    \\
    & \phantom{= \;}
    -ba|\alpha_2\beta_2|bc
    +ac|\alpha_2\beta_2|bc
    -11ab|\alpha_4|ba+2ab|\alpha_4|ac+6ba|\alpha_4|bc+ac|\alpha_4|ab+9ac|\alpha_4|bc
    \\
    & \phantom{= \;}
    +3ab|\beta_4|ac+bc|\beta_4|ac-6ba|\beta_4|ab+ba|\beta_4|bc-3ac|\beta_4|ab-5ab|\gamma_4|ba
    +4ab|\gamma_4|ac
    \\
    & \phantom{= \;}
    -4bc|\gamma_4|ba
    -2bc|\gamma_4|ac-3ba|\gamma_4|ab+6ba|\gamma_4|bc-8ac|\gamma_4|ab+4a|\alpha_4|bac+b|\beta_4|abc
    \\
    & \phantom{= \;}
    -c|\gamma_4|aba
    -\omega_1c|\epsilon^!|c.
\end{align*}
\end{fact}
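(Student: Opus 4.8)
The plan is to verify directly that each element $q^{5}_{\omega_i u}$ listed in the statement satisfies its defining equation from \eqref{eq:lifting}, namely $\delta^b_n(q^{5}_{\omega_i u})=\rho_{n-1}\delta^b_n(\omega_i 1|u|1)$, proceeding by induction on the homological degree $n$. The induction is dictated by \eqref{eq:lifting} itself: the operator $\rho_{n-1}$ on the right-hand side is assembled from the lower-degree elements $q^{5}_{\omega_j v}$ via the $\rho^{e}$-operator rule $\rho_{n-1}(x|v|y)=x\hskip 0.5mm q^{5}_{v}\hskip 0.5mm y+\rho^{5}(x)|v|y+x|v|\rho^{5}(y)$, so the degree-$(n-1)$ part of the lifting must be fixed before the degree-$n$ identity can be tested.

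Before starting, I would confirm that $\rho^{5}$ is a genuine derivation. Since $A$ is quadratic it is determined by $\rho^{5}(a)=\rho^{5}(c)=0$ and $\rho^{5}(b)=abc$, and one checks that the induced map on $V^{\otimes 2}$ annihilates the relation space $R$; reducing products to the basis $\mathcalboondox{B}$ using $a^2=b^2=c^2=0$ and the quadratic relations then confirms that $\rho^{5}$ vanishes on every other basis element, as asserted (for instance $\rho^{5}(ba)=abc\cdot a=abca$ must be reduced via $ca=-ab-bc$).

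The induction then runs as follows. In degree $n\in\{1,2,3\}$ and on the pure Koszul strand ($i=0$) one has $\delta^b_n(1|u|1)=d^b_n(1|u|1)$, since the $\omega_{-1}$ term vanishes; the right-hand side $\rho_{n-1}(d^b_n(1|u|1))$ is then computed from the explicit Koszul differential of Proposition \ref{pro:bimodule projective resolution} together with the already-fixed $\rho_{n-1}$, and one matches it against $\delta^b_n(q^{5}_u)$. The vanishing claims $q^{5}_{\alpha_n}=q^{5}_{\gamma_n}=0$ are exactly the assertions $\rho_{n-1}(d^b_n(1|\alpha_n|1))=\rho_{n-1}(d^b_n(1|\gamma_n|1))=0$, verified at each stage. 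That some valid $q$ exists at all is automatic: $\rho_{n-1}\delta^b_n(\cdots)$ is a $\delta^b_{n-1}$-cycle because $\delta^b_{n-1}\rho_{n-1}\delta^b_n=\rho_{n-2}\delta^b_{n-1}\delta^b_n=0$, and the resolution is exact, so the content of the Fact is only the exhibition of explicit representatives, which we verify term by term.

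The hard part will be the top case $q^{5}_{\omega_1\epsilon^!}$ in degree $4$. Here $d^b_0=0$ forces $\delta^b_4(\omega_1 1|\epsilon^!|1)=f^b_0(1|\epsilon^!|1)$, so the right-hand side is $\rho_3$ applied to the long generator value $f^b_0(1|\epsilon^!|1)$ from \cite{es zl}, (3.2); expanding it via the $\rho^{e}$-operator rule pulls in all four nonzero degree-$3$ elements $q^{5}_{\beta_3},q^{5}_{\alpha_2\beta},q^{5}_{\alpha_2\gamma},q^{5}_{\alpha\beta_2}$. This must be matched against $\delta^b_4(q^{5}_{\omega_1\epsilon^!})$, whose $\omega_1 K^b_0$-component $-\omega_1 c|\epsilon^!|c$ contributes $-f^b_0(c|\epsilon^!|c)=-c\cdot f^b_0(1|\epsilon^!|c)$ (obtainable from Fact \ref{fact:fb0degree1}) and whose $\omega_0 K^b_4$-component contributes a $d^b_4$-image, with $d^b_4$ taken from \cite{es zl}, Fact 3.1. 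No conceptual difficulty arises beyond the scale of this computation: every intermediate product of basis elements of $A$ must be consistently reduced to $\mathcalboondox{B}$, and the two sides compared coefficient by coefficient across a large spanning set of $P^b_3$, which is precisely why the statement flags it as lengthy but straightforward.
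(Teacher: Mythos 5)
Your proposal is correct and takes essentially the same route as the paper: the paper's proof is exactly the asserted ``lengthy but straightforward'' degree-by-degree verification of the defining identities $\delta^b_n(q^{5}_{\omega_i u})=\rho_{n-1}\delta^b_n(\omega_i 1|u|1)$ from \eqref{eq:lifting}, with existence of such $q$'s guaranteed by \cite{Mariano}, Lemma 1.3 (your exactness remark), so the Fact's content is indeed only the exhibition of explicit representatives. Your organization of the induction, the well-definedness check for the derivation $\rho^{5}$, and the treatment of the degree-$4$ case via $\delta^b_4(\omega_1 1|\epsilon^!|1)=f^b_0(1|\epsilon^!|1)$, the bimodule-linearity of $f^b_0$ together with Fact \ref{fact:fb0degree1}, and the explicit $d^b_4$ all match the paper's setup.
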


\begin{fact}
\label{fact:-X6}
Let $\rho=\rho^{6} : A\to A$ be the derivation of $A$ defined by $\rho^{6}(c)=aba$ and $\rho^{6}(x)=0$ for $x\in \mathcalboondox{B}\setminus \{c\} $.
Then the elements $q_{\omega_iu}=q_{\omega_iu}^{6} \in P^b_n$ in \eqref{eq:lifting} can be chosen as follows. First, $q_{\alpha_n}^{6}=q_{\beta_n}^{6}=0$ for $n\in\NN$. 
Moreover, 
\begin{align*}
    q_{\gamma}^{6}&=ab|\alpha|1+a|\beta|a+1|\alpha|ba,
    \\
    q_{\gamma_2}^{6}&=ba|\alpha\beta|1-b|\alpha_2|b+c|\beta_2|c+a|\gamma_2|a+c|\alpha\beta|a+a|\alpha\gamma|c+1|\alpha\gamma|ab,
    \\
    q_{\alpha\beta}^{6}&=2ab|\alpha_2|1+c|\alpha_2|a+b|\beta_2|c+b|\alpha\beta|a+1|\beta_2|ab,
    \\
    q_{\alpha\gamma}^{6}&=ba|\beta_2|1+a|\alpha_2|c+c|\beta_2|b+a|\alpha\gamma|b+2|\alpha_2|ba,
    \\
    q_{\gamma_3}^{6}&=ab|\alpha\beta_2|1-ba|\beta_3|1+a|\beta_3|a-a|\gamma_3|b-b|\gamma_3|a-b|\alpha_2\beta|c-c|\alpha_2\beta|b-b|\alpha\beta_2|b-1|\beta_3|ab
    \\
    & \phantom{= \;}
    +1|\alpha\beta_2|ba,
    \\
    q_{\alpha_2\beta}^{6}&=ac|\alpha_3|1+ab|\alpha_2\gamma|1+2c|\alpha_3|c+2a|\beta_3|c+2c|\beta_3|a+2a|\alpha_2\gamma|a+b|\alpha_2\gamma|c+c|\alpha_2\gamma|b
    \\
    & \phantom{= \;}
    +a|\alpha\beta_2|b
    +b|\alpha\beta_2|a-1|\alpha_3|(ab+bc)+1|\alpha_2\gamma|ba,
    \\
    q_{\alpha_2\gamma}^{6}&=3ab|\alpha_3|1+2ba|\beta_3|1-a|\alpha_3|c-c|\alpha_3|a-b|\beta_3|c-c|\beta_3|b+3|\alpha_3|ba+2|\beta_3|ab,
    \\
    q_{\alpha\beta_2}^{6}&=2bc|\beta_3|1+2ba|\alpha_2\gamma|1+3b|\alpha_3|c+3c|\alpha_3|b+3b|\alpha_2\gamma|b-2|\beta_3|(ba+ac)+2|\alpha_2\gamma|ab,
    \\
    q_{\omega_1\epsilon^!}^{6}&=2bac|\alpha_2\beta_2|a+aba|\alpha_2\beta_2|c-5bac|\alpha_4|a-3abc|\beta_4|b-2bac|\alpha_3\beta|b+8ab|\alpha_4|ba-6ab|\alpha_4|ac
    \\
    & \phantom{= \;}
    +6bc|\alpha_4|ba+3ba|\alpha_4|bc+3ac|\alpha_4|bc+3ab|\beta_4|ac+3bc|\beta_4|ac+10ba|\beta_4|ab-8ba|\beta_4|bc
    \\
    & \phantom{= \;}
    +8ac|\beta_4|ab-4ab|\gamma_4|ba-2ab|\alpha_2\beta_2|ba-5a|\alpha_4|bac-3b|\beta_4|abc-2b|\alpha_3\gamma|bac
    \\
    & \phantom{= \;}
    +2a|\alpha_2\beta_2|bac
    +c|\alpha_2\beta_2|aba- \omega_{1} a|\epsilon^!|a.
\end{align*}
\end{fact}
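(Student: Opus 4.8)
The plan is to verify, for each dual generator $u$ listed in the statement, that the proposed element $q^6_{\omega_i u}$ satisfies the defining relation extracted from \eqref{eq:lifting}, namely $\delta^b_n(q^6_{\omega_i u}) = \rho_{n-1}\,\delta^b_n(\omega_i 1|u|1)$, where $\rho_\bullet$ is the $\rho^e$-lifting of $\rho^6$ assembled through \eqref{eq:lifting}. The existence of a lifting, hence of \emph{some} admissible $q^6_{\omega_i u}$, is already granted by \cite{Mariano}, Lemma 1.3; the substance of the Fact lies in exhibiting an explicit choice, so the entire proof reduces to checking these equalities with the explicit differential $\delta^b_\bullet$ of Proposition \ref{pro:bimodule projective resolution} (whose Koszul part $d^b_\bullet$ and connecting maps $f^b_\bullet$ are recorded in \cite{es zl}) together with the values of $f^b_0$ collected in Fact \ref{fact:fb0degree1}.

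I would carry this out inductively on the cohomological degree $n$, because the operator $\rho_{n-1}$ appearing on the right-hand side is built only from the lower maps $q^6_{\omega_j v}$ with $j \le \lfloor (n-1)/4 \rfloor$. Thus one treats in turn $n=1$ (the single relevant generator $\gamma$), $n=2$ (the generators $\gamma_2, \alpha\beta, \alpha\gamma$), $n=3$ (the generators $\gamma_3, \alpha_2\beta, \alpha_2\gamma, \alpha\beta_2$) and finally the $\omega_1$-generator at $n=4$. For $n \le 3$ one has $\lfloor n/4\rfloor = 0$, so $\delta^b_n$ coincides with the Koszul differential $d^b_n$ and the verification is purely internal to the sector $\omega_0 K^b_\bullet$. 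At every step the recipe is identical: expand $\delta^b_n(\omega_i 1|u|1)$, apply $\rho_{n-1}$ via \eqref{eq:lifting} using the already-determined lower $q$'s, and match the outcome with $\delta^b_n$ of the proposed $q^6_{\omega_i u}$. A degree count keeps all of this under control: since $\rho^6$ sends the degree-one element $c$ to the degree-three element $aba$, it raises internal degree by $2$, so each $q^6_{\omega_i u}$ is forced to live in the single internal degree two units above that of $\omega_i 1|u|1$, which sharply limits the monomials that can occur.

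The vanishing statements $q^6_{\alpha_n} = q^6_{\beta_n} = 0$ deserve a separate word, and admit a clean explanation. One checks that the Koszul differential takes $1|\alpha_n|1$ into the $\Bbbk$-span of $a|\alpha_{n-1}|1$ and $1|\alpha_{n-1}|a$, and similarly $1|\beta_n|1$ into terms built only from $b$ and $\beta_{n-1}$; that is, the pure powers stay inside the sector involving just left and right multiplications by $a$, respectively $b$. Since $\rho^6$ annihilates both $a$ and $b$ and, inductively, $q^6_{\alpha_{n-1}} = q^6_{\beta_{n-1}} = 0$, the operator $\rho_{n-1}$ kills $d^b_n(1|\alpha_n|1)$ and $d^b_n(1|\beta_n|1)$, so the choice $q^6_{\alpha_n} = q^6_{\beta_n} = 0$ is admissible. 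This is in sharp contrast with a mixed generator such as $\alpha\beta$: here the action of the dual generator $A$ produces $\gamma$ (reflecting the relation $CA = AB$ in $A^!$), so $\rho_{n-1}$ meets $\rho^6(c) = aba$ and $q^6_\gamma \ne 0$, forcing a nonzero $q^6_{\alpha\beta}$.

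The genuine obstacle is the top term $q^6_{\omega_1 \epsilon^!} \in P^b_4$. At $n=4$ the $4$-periodicity of the resolution makes $\delta^b_4$ couple the $\omega_1$-component to the $\omega_0$-component through the connecting map $f^b_0$: concretely $\delta^b_4(\omega_1 1|\epsilon^!|1) = \omega_0 f^b_0(1|\epsilon^!|1)$, and the $\omega_1$-summand $-\omega_1 a|\epsilon^!|a$ of the proposed $q$ again re-enters $\omega_0 K^b_3$ via $f^b_0$. Verifying the relation at this level therefore requires expanding $f^b_0$ on the relevant bimodule generators using Fact \ref{fact:fb0degree1}, applying $\rho_3$ (which now invokes every $q^6_u$ computed at $n=3$), and reconciling the resulting enormous linear combination of basis elements of $A \otimes (A^!_{-j})^* \otimes A$ with $\delta^b_4$ of the explicit $q^6_{\omega_1\epsilon^!}$. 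The difficulty here is entirely one of bookkeeping --- tracking signs and the large index set coming from the $12$-dimensional $A$ and the bases of $(A^!_{-j})^*$ --- rather than of any conceptual kind; as a useful cross-check I would exploit the $S_3$-symmetry relating $\rho^6$ to the derivations $\rho^4$ and $\rho^5$ of Facts \ref{fact:-X4} and \ref{fact:-X5}.
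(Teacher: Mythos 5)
Your proposal is correct and takes essentially the same route as the paper: the paper's proof of this Fact is precisely the ``lengthy but straightforward computation'' you describe, namely verifying $\delta^b_n(q^6_{\omega_i u})=\rho_{n-1}\,\delta^b_n(\omega_i 1|u|1)$ generator by generator in increasing cohomological degree, with the $\omega_1$-component coupling to $\omega_0 K^b_3$ through $f^b_0$ exactly as you indicate. Your supporting observations (internal-degree bookkeeping, the structural reason why $q^6_{\alpha_n}=q^6_{\beta_n}=0$ while the mixed generators force nonzero corrections) are sound organizational aids to the same verification.
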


\begin{fact}
\label{fact:-X7}
Let $\rho=\rho^{7} : A\to A$ be the derivation of $A$ defined by $\rho^{7}(a)=aba-abc$, $\rho^{7}(ab)=\rho^{7}(ac)=abac$, $\rho^{7}(ba)=-abac$ 
and $\rho^{7}(x)=0$ for $x\in \mathcalboondox{B}\setminus \{a,ab,ba,ac\} $. 
Then the elements $q_{\omega_iu}=q_{\omega_iu}^{7}\in P^b_n$ in \eqref{eq:lifting} can be chosen as follows.
First, $q_{\beta_n}^{7}=q_{\gamma_n}^{7}=0$ for $n\in\NN$. 
Moreover, 
\begin{align*}
    q_{\alpha}^{7} &=ab|\alpha|1-ab|\gamma|1+a|\beta|a-a|\beta|c+1|\alpha|ba-1|\alpha|bc,
    \\
    q_{\alpha_2}^{7} &=ab|\alpha_2|1+ac|\alpha_2|1-a|\alpha\beta|a-1|\alpha_2|bc+2|\alpha_2|ba,
    \\
    q_{\alpha\beta}^{7}&=ba|\beta_2|1-ba|\gamma_2|1+(ba+ac)|\alpha\beta|1+a|\alpha_2|c+c|\beta_2|b+c|\beta_2|c-a|\gamma_2|c+b|\gamma_2|b-c|\gamma_2|b
    \\
    & \phantom{= \;}
    +c|\alpha\beta|a-c|\alpha\beta|c+a|\alpha\gamma|b+c|\alpha\gamma|b+1|\alpha_2|ba-1|\gamma_2|ba+1|\alpha\beta|(ba+ac),
    \\
    q_{\alpha\gamma}^{7}&=ab|\alpha_2|1-ab|\gamma_2|1+c|\alpha_2|a-c|\alpha_2|c+b|\beta_2|c+b|\alpha\beta|a-b|\alpha\beta|c+1|\beta_2|ab+1|\alpha\beta|ac,
    \\
    q_{\alpha_3}^{7}&=ab|\alpha_3|1+ac|\alpha_3|1-1|\alpha_3|bc+2|\alpha_3|ba,
    \\
    q_{\alpha_2\beta}^{7}&=ab|\alpha_3|1+2ba|\beta_3|1+2bc|\beta_3|1-ba|\gamma_3|1+2ba|\alpha_2\gamma|1-ab|\alpha\beta_2|1-a|\alpha_3|c+3b|\alpha_3|c
    \\
    & \phantom{= \;}
    +c|\alpha_3|b
    -a|\beta_3|a-a|\beta_3|b+b|\beta_3|a-b|\beta_3|c-2c|\beta_3|b+c|\beta_3|c-a|\gamma_3|b+a|\gamma_3|c-b|\gamma_3|b
    \\
    & \phantom{= \;}
    +2c|\gamma_3|a+c|\alpha_2\beta|c-a|\alpha_2\gamma|b+a|\alpha_2\gamma|c+b|\alpha_2\gamma|b-a|\alpha\beta_2|c+b|\alpha\beta_2|c-c|\alpha\beta_2|b
    \\
    & \phantom{= \;}
    +2|\alpha_3|ba
    +1|\beta_3|ab-3|\gamma_3|ab-1|\alpha_2\beta|bc+1|\alpha_2\beta|ba+2|\alpha_2\beta|ac,
    \\
    q_{\alpha_2\gamma}^{7}&=ac|\alpha_3|1-ba|\gamma_3|1-(ab+bc)|\alpha_2\beta|1+ba|\alpha\beta_2|1-b|\alpha_3|a+3b|\alpha_3|c+2c|\alpha_3|b+c|\alpha_3|c
    \\
    & \phantom{= \;}
    +a|\beta_3|c+c|\beta_3|a-a|\gamma_3|a+2a|\gamma_3|c-2b|\gamma_3|b+2c|\gamma_3|a+b|\alpha_2\beta|b-c|\alpha_2\beta|a+2c|\alpha_2\beta|c
    \\
    & \phantom{= \;}
    +a|\alpha_2\gamma|a+b|\alpha_2\gamma|b+b|\alpha_2\gamma|c+c|\alpha_2\gamma|a+b|\alpha\beta_2|a-2|\gamma_3|ab+2|\alpha_2\beta|ac,
    \\
    q_{\alpha\beta_2}^{7}&=ab|\alpha_3|1+ba|\beta_3|1-2ab|\gamma_3|1-a|\alpha_3|c-c|\alpha_3|a+2c|\alpha_3|c-a|\gamma_3|b+b|\gamma_3|c+c|\alpha_2\gamma|b
    \\
    & \phantom{= \;}
    +1|\alpha_3|ba
    +1|\beta_3|ab-1|\gamma_3|ba,
    \\
    q_{\omega_1\epsilon^!}^{7}&=2abc|\alpha_2\beta_2|a-3abc|\alpha_2\beta_2|c-aba|\alpha_2\beta_2|a+2aba|\alpha_2\beta_2|b-5bac|\alpha_4|b-7bac|\alpha_4|c
    \\
    & \phantom{= \;}
    +5abc|\beta_4|c
    +3aba|\gamma_4|a-3aba|\gamma_4|b+8ab|\alpha_4|ab+3ab|\alpha_4|bc+bc|\alpha_4|bc-2ba|\alpha_4|ba
    \\
    & \phantom{= \;}
    +4ac|\alpha_4|ba
    +6ac|\alpha_4|ac+9ab|\beta_4|ab-4bc|\beta_4|ab-5bc|\beta_4|bc-4ba|\beta_4|ba-7ba|\beta_4|ac
    \\
    & \phantom{= \;}
    +14ab|\gamma_4|ab+7bc|\gamma_4|ab-ba|\gamma_4|ba-ba|\gamma_4|ac-8ac|\gamma_4|ba+7ab|\alpha_3\beta|ac-3bc|\alpha_3\beta|ba
    \\
    & \phantom{= \;}
    +bc|\alpha_3\beta|ac+2ba|\alpha_3\beta|ab+3ba|\alpha_3\beta|bc+ba|\alpha_3\beta|ba+3ba|\alpha_3\beta|ac+3ac|\alpha_3\beta|bc
    \\
    & \phantom{= \;}
    -6ac|\alpha_3\beta|ba
    -3ac|\alpha_3\beta|ac-ab|\alpha_3\gamma|ab+5ab|\alpha_3\gamma|bc-9ab|\alpha_3\gamma|ba-5ab|\alpha_3\gamma|ac
    \\
    & \phantom{= \;}
    +3bc|\alpha_3\gamma|ab+6bc|\alpha_3\gamma|bc-2bc|\alpha_3\gamma|ba-2ba|\alpha_3\gamma|ab-2ba|\alpha_3\gamma|bc+6ac|\alpha_3\gamma|ab
    \\
    & \phantom{= \;}
    +7ab|\alpha_2\beta_2|ab+2ab|\alpha_2\beta_2|bc-ba|\alpha_2\beta_2|ba+2ba|\alpha_2\beta_2|ac+4ac|\alpha_2\beta_2|ac-2b|\alpha_4|bac
    \\
    & \phantom{= \;}
    +c|\alpha_4|bac+a|\beta_4|abc-2c|\beta_4|abc+4a|\gamma_4|aba+2b|\gamma_4|aba+2a|\alpha_2\beta_2|abc-2c|\alpha_2\beta_2|abc
    \\
    & \phantom{= \;}
    -a|\alpha_2\beta_2|aba-2b|\alpha_2\beta_2|bac-c|\alpha_2\beta_2|bac+\omega_1 3|\epsilon^!|(ba+ac-bc).
\end{align*}
\end{fact}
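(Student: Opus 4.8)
The Fact comprises two assertions: that $\rho^{7}$ is a well-defined derivation of $A$ taking the stated values, and that the explicitly displayed elements $q^{7}_{\omega_i u}$ may be taken as the components appearing in a $\rho^{e}$-lifting of $\rho^{7}$ to $(P^b_{\bullet},\delta^b_{\bullet})$. The existence of such a lifting is already guaranteed by \cite{Mariano}, Lemma 1.3; what requires proof is only that these particular elements are admissible choices, \textit{i.e.} that each satisfies the defining relation $\delta^b_n(q^{7}_{\omega_i u})=\rho^{7}_{n-1}\delta^b_n(\omega_i 1|u|1)$ appearing in \eqref{eq:lifting}. The plan is therefore to verify these identities directly, organising the computation by induction on the homological degree $n$, since $\rho^{7}_{n-1}$ is completely determined on $P^b_{n-1}$ once the components $q^{7}_{\omega_j v}$ of lower homological degree have been fixed.

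First I would confirm that $\rho^{7}$ is a derivation. As a derivation is determined by its values on the generators $a,b,c$, I set $\rho^{7}(a)=aba-abc$ and $\rho^{7}(b)=\rho^{7}(c)=0$, extend by the Leibniz rule to $\T V$, and check that the five quadratic generators spanning $R$ are carried into the two-sided ideal they generate; reducing $\rho^{7}(a^2)$, $\rho^{7}(ab+bc+ca)$ and $\rho^{7}(ba+ac+cb)$ modulo this ideal, using $a^2,b^2,c^2\in R$, then yields zero, so $\rho^{7}$ descends to $A$. Since $\rho^{7}$ raises the internal degree by $2$ and $\FK(3)_m=0$ for $m\geqslant 5$, it vanishes automatically on $\mathcalboondox{B}_m$ for $m\geqslant 3$; the only remaining values to record are the degree-$2$ ones, and a short computation in the multiplication table of $A$ gives $\rho^{7}(ab)=(aba-abc)b=abac$, $\rho^{7}(ac)=(aba-abc)c=abac$, $\rho^{7}(ba)=b(aba-abc)=-abac$ and $\rho^{7}(bc)=0$, matching the statement.

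For the elements $q^{7}_{\omega_i u}$, I would recall from \eqref{eq:lifting} that $\rho^{7}_n(\omega_i x|u|y)=x\hskip0.5mm q^{7}_{\omega_i u}\hskip0.5mm y+\omega_i \rho^{7}(x)|u|y+\omega_i x|u|\rho^{7}(y)$. The base case $n=1$ uses $\delta^b_1=d^b_1$ and $\rho^{7}(1)=0$: one computes $\rho^{7}_0\delta^b_1(1|\alpha|1)$ from the derivation values above and checks that $d^b_1(q^{7}_{\alpha})$ reproduces it, while $q^{7}_{\beta}=q^{7}_{\gamma}=0$ is forced by $\rho^{7}(b)=\rho^{7}(c)=0$. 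For the inductive step, for each usual basis element $u\in\mathcalboondox{B}^{!*}_{n-4i}$ I would expand $\delta^b_n(\omega_i 1|u|1)$ by means of the formula for $\delta^b_n$ in Proposition \ref{pro:bimodule projective resolution}, using the explicit Koszul differential $d^b_{\bullet}$ and the connecting maps $f^b_{\bullet}$ of \cite{es zl}, (3.2), then apply $\rho^{7}_{n-1}$ through the formula above with the previously verified lower-degree components, and finally compare the result with $\delta^b_n(q^{7}_{\omega_i u})$. Throughout, the vanishing of $\rho^{7}$ on $b$ and $c$ yields $q^{7}_{\beta_n}=q^{7}_{\gamma_n}=0$ for $n\in\NN$ and eliminates a large number of Leibniz contributions, which substantially shortens the bookkeeping.

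The main obstacle is not conceptual but computational: in each homological degree $n\geqslant 2$ there are up to six usual basis elements, and both $\delta^b_n$ and $\rho^{7}_{n-1}$ generate long expressions lying in $A\otimes(A^!_{-(n-4i)})^*\otimes A$, so comparing the two sides demands careful tracking of signs and of products in $A$. The heaviest case is the top component $q^{7}_{\omega_1\epsilon^!}\in P^b_4$, which is precisely the one needed for the brackets with $X_{14}\in\HH^4(A)$ (as the cocycle $X_{14}=\omega^*_1\epsilon^!|1$ is supported on the $\omega_1$-summand): evaluating $\rho^{7}_3\delta^b_4(\omega_1 1|\epsilon^!|1)$ there requires all of the degree-$3$ elements $q^{7}_{u}$ together with the full list of $f^b_0$-values recorded in Fact \ref{fact:fb0degree1}. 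In practice this last verification is most safely performed, or at least independently cross-checked, with the aid of computer algebra.
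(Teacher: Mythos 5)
Your proposal is correct and takes essentially the same approach as the paper, which offers no written argument beyond declaring the verification of Facts \ref{fact:-X4}--\ref{fact:-X7} ``a lengthy but straightforward computation'': checking that $\rho^{7}$ descends to a derivation of $A$ (your degree-$2$ values $\rho^{7}(ab)=\rho^{7}(ac)=abac$, $\rho^{7}(ba)=-abac$ are correct, and the vanishing in degrees $\geqslant 3$ is indeed automatic since $\rho^{7}$ raises internal degree by $2$), and then verifying the defining identities $\delta^b_n(q^{7}_{\omega_i u})=\rho^{7}_{n-1}\delta^b_n(\omega_i 1|u|1)$ by induction on homological degree, is precisely that computation. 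One small imprecision worth noting: $q^{7}_{\beta_n}=q^{7}_{\gamma_n}=0$ is a valid \emph{choice} (the right-hand side vanishes by induction) rather than being forced, since any cycle could be added without violating \eqref{eq:lifting}.
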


We will now apply the previous results to compute the Gerstenhaber brackets of $X_{i}$ for $i \in \llbracket 4,7 \rrbracket$ with all the other generators of the Hochschild cohomology of $A$.

\begin{prop}
\label{prop:X4}
The Gerstenhaber bracket $[X_i,X_j] \in \operatorname{HH}^{\bullet}(A)$ for $i \in \llbracket 4,7 \rrbracket$ and $j\in \llbracket 1,14 \rrbracket $ is given by 
\begin{equation}
\label{eq:X4 Xn}
[X_{i},X_{j}] = \begin{cases} 
    0, &\text{if $(i,j)\in \big(\llbracket 4,7 \rrbracket \times  \llbracket 1, 7 \rrbracket \big)\cup \big( \llbracket 4,6 \rrbracket \times \llbracket 9, 11 \rrbracket \big)$,}
    \\
    2 X_{i}, &\text{if $i \in \llbracket 4,7 \rrbracket $ and $j = 8$,}
    \\
    4 X_{1}X_{9}, &\text{if $i = 7$ and $j = 9$,}
    \\
     X_{1}X_{10}, &\text{if $i = 7$ and $j = 10$,}
    \\
    -X_{1} (X_9+X_{10}), &\text{if $i = 7$ and $j = 11$,}
    \\
    2 X_{1} X_{i+5}, &\text{if $i \in \llbracket 4,5 \rrbracket $ and $j = 12$,}
    \\
    2 X_{1} (X_{9} + X_{10}), &\text{if $i = 6$ and $j = 12$,}
    \\
    X_{1} X_9, &\text{if $i = 7$ and $j = 12$,}
    \\
    \tau_{i} 8 X_{4} X_{10}, &\text{if $i \in \llbracket 4,6 \rrbracket $ and $j= 13$,}
    \\
    4X_1X_{13}-4X_2X_{13}-8X_4X_{12}, &\text{if $i = 7$ and $j = 13$,}
    \\
    \tau_{i} \big((1/3)X_{i+5}^2-(4/3) X_{9} X_{10}\big), &\text{if $i \in \llbracket 4,6 \rrbracket $ and $j = 14$,}
    \\
    X_{9} X_{12}, &\text{if $i = 7$ and $j = 14$,}
\end{cases}
\end{equation}
where $\tau_{i} = 1$ if $i \in \llbracket 4, 5\rrbracket$ and $\tau_{6} = -1$. 
\end{prop}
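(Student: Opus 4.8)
The plan is to split according to the range of $j$, handling a few cases by antisymmetry together with brackets computed earlier and reducing all the others to the method of Theorem \ref{thm:HH1 HHn}. First I would dispose of $j \in \llbracket 1,3 \rrbracket$: for $i \in \llbracket 4,7 \rrbracket$ and $j \in \llbracket 1,3 \rrbracket$ the bracket $[X_{i}, X_{j}]$ is literally one of the entries already recorded in Proposition \ref{prop:bracket-H0-Xi} via \eqref{X_i X_1}, \eqref{X_i,X_2} and \eqref{X_i,X_3}, all of which vanish for such $i$. The case $j = 8$ follows from Proposition \ref{prop:X_8}: since $X_{i}$ and $X_{8}$ both lie in $\HH^{1}(A)$, the antisymmetry relation in \eqref{property 1} gives $[X_{i}, X_{8}] = -[X_{8}, X_{i}] = 2 X_{i}$ using \eqref{eq:X8 Xn}. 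This leaves the genuinely new cases $j \in \llbracket 4,7 \rrbracket$ and $j \in \llbracket 9,14 \rrbracket$.

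For these I would apply Theorem \ref{thm:HH1 HHn} with the derivations $\rho^{i}$ of Facts \ref{fact:-X4}, \ref{fact:-X5}, \ref{fact:-X6} and \ref{fact:-X7}. Exactly as in Proposition \ref{prop:X_8}, one checks from the explicit $i_{1}$ in \eqref{eq:morphism} that $G(\rho^{i}) i_{1} = -X_{i}$, so that bilinearity of the bracket converts Theorem \ref{thm:HH1 HHn} into the identity
\[ [X_{i}, X_{j}] = \big[\, Y_{j} \rho^{i}_{n} - \rho^{i} Y_{j} \,\big], \]
where $n$ is the cohomological degree of $X_{j}$ and $\rho^{i}_{\bullet}$ is the $(\rho^{i})^{e}$-lifting assembled from $\rho^{i}$ together with the elements $q^{i}_{\omega_{k} u}$ of the relevant Fact through \eqref{eq:lifting}. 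The computation then runs generator by generator: one evaluates $Y_{j} \rho^{i}_{n} - \rho^{i} Y_{j}$ on each $1 | u | 1$ with $u \in \mathcalboondox{B}^{!*}_{n}$, and, once $n \geqslant 4$, also on $\omega_{1} 1 | \epsilon^{!} | 1$, producing an explicit cocycle in $\Hom_{A^e}(P^b_{n}, A)$.

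It then remains to identify the cohomology class of each such cocycle. Here the Gerstenhaber bracket preserves the internal degree (Corollary \ref{cor:gr Hom}), so $[X_{i}, X_{j}]$ must land in the component of $\HH^{n}(A)$ of internal degree equal to the sum of the internal degrees of $X_{i}$ and $X_{j}$; whenever that component vanishes the bracket is forced to be zero, which disposes of several of the entries at once. For the surviving cocycles — including the brackets internal to $\HH^{1}(A)$, which require only the degree-one lifting built from $q^{i}_{\alpha}, q^{i}_{\beta}, q^{i}_{\gamma}$ — I would reduce modulo the coboundaries of \cite{es zl} and rewrite the result in the basis representatives of $\HH^{n}(A)$, finally using the cup-product identities \eqref{eq:some cup products1} and \eqref{eq:some cup products2} to re-express these through the generators $X_{1}, \dots, X_{14}$ and obtain the right-hand sides of \eqref{eq:X4 Xn}.

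The main obstacle is the bookkeeping for $j = 13$ and $j = 14$, where $X_{j}$ lies in $\HH^{3}(A)$ and $\HH^{4}(A)$. There the formula above needs $\rho^{i}_{n}$ in homological degrees $3$ and $4$, hence the long expressions for $q^{i}_{\omega_{1} \epsilon^{!}}$ at the end of Facts \ref{fact:-X4}--\ref{fact:-X7}; pairing $Y_{13}$ and $Y_{14}$ against these and then reducing the sizeable resulting cocycles modulo coboundaries is by far the most delicate step, and it is where the sign $\tau_{i}$ distinguishing $i \in \llbracket 4,5 \rrbracket$ from $i = 6$ appears. By contrast the cases $j \in \llbracket 4,7 \rrbracket$ land back in $\HH^{1}(A)$ and are comparatively short; they could also be cross-checked against one another through $[X_{i}, X_{j}] = -[X_{j}, X_{i}]$, which holds for $i, j \in \llbracket 4,7 \rrbracket$ by \eqref{property 1}.
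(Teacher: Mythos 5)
Your overall strategy is the paper's own: dispose of $j \in \llbracket 1,3 \rrbracket$ via Proposition \ref{prop:bracket-H0-Xi} and $j=8$ via antisymmetry with Proposition \ref{prop:X_8}, then apply Theorem \ref{thm:HH1 HHn} with the derivations $\rho^{i}$ of Facts \ref{fact:-X4}--\ref{fact:-X7} (your sign bookkeeping $[X_i,X_j] = [Y_j\rho^i_n - \rho^i Y_j]$, from $G(\rho^i)i_1 = -Y_i$, is correct), and finally rewrite the resulting cocycles modulo the coboundaries of \cite{es zl} using the cup-product identities. This matches the paper's proof for $j \in \llbracket 1,13 \rrbracket$, where the degree-$\leqslant 3$ lifting data in the Facts indeed suffice.

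However, there is a genuine gap at $j = 14$. You assert that $\rho^i_4$ can be ``assembled from $\rho^{i}$ together with the elements $q^{i}_{\omega_{k} u}$ of the relevant Fact through \eqref{eq:lifting}'', but the Facts deliberately omit $q^{i}_{u}$ for $u \in \mathcalboondox{B}^{!*}_{4}$ (only $q^{i}_{\omega_1\epsilon^!}$ is listed in degree $4$), so the lifting you need to pair against $Y_{14}$ is not available from the stated data, and your plan never says how to obtain it. The paper bridges exactly this point with a dedicated argument: since $Y_{14}$ vanishes off the $\omega_1$-component, $\varphi^{i}(1|u|1) = -Y_{14}(q^{i}_{u}) = -\lambda^{u}_{i}$, where by degree reasons $q^{i}_{u} = B^{i}_{u} + \lambda^{u}_{i}\,\omega_1 1|\epsilon^!|1$ with $B^{i}_{u} \in K^{b}_{4}$; the unknown scalars $\lambda^{u}_{i}$ are then extracted, \emph{without} computing $q^{i}_{u}$, from the relation $d^{b}_{4}(B^{i}_{u}) = \rho^{i}_{3}\delta^{b}_{4}(1|u|1) - \lambda^{u}_{i} f^{b}_{0}(1|\epsilon^!|1)$ by comparing the coefficients of $aba|\gamma_3|1$ and $aba|\alpha_2\gamma|1$, which necessarily coincide in any element of the image of $d^{b}_{4}$. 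Without this coefficient-matching idea (or, alternatively, an explicit and laborious solution of $\delta^{b}_{4}(q^{i}_{u}) = \rho^{i}_{3}\delta^{b}_{4}(1|u|1)$, which you do not undertake), the $j = 14$ entries --- in particular the values $\tau_i\big((1/3)X_{i+5}^2 - (4/3)X_9X_{10}\big)$ and $X_9X_{12}$ --- cannot be produced. A related slip: you attach the long expressions $q^{i}_{\omega_1\epsilon^!}$ to the case $j = 13$, but $\omega_1\epsilon^!$ sits in homological degree $4$, so these enter only at $j = 14$ (for evaluating $\varphi^{i}$ on $\omega_1 1|\epsilon^!|1$); the $j = 13$ computation needs only the degree-$3$ elements $q^{i}_{u}$, $u \in \mathcalboondox{B}^{!*}_{3}$, which the Facts do supply.
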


\begin{proof}
Given $i \in \llbracket 4,7 \rrbracket$, let $\rho^{i}$ be the derivation of Fact \ref{fact:-X4}, Fact \ref{fact:-X5}, Fact \ref{fact:-X6}, and Fact \ref{fact:-X7},  respectively. 
Note that $G(\rho^{i})i_1=-Y_i$. 
By Theorem \ref{thm:HH1 HHn}, $[-Y_i, Y_j]$ is precisely the cohomology class of $\rho^{i} Y_j-Y_j \rho_n^{i}$ for $i \in \llbracket 4,7 \rrbracket$ and $j\in \llbracket 1,13 \rrbracket$, where $n$ is the cohomological degree of $Y_{j}$ and $\rho_n^{i}$ is obtained from \eqref{eq:lifting} together with Fact \ref{fact:-X4} for $i= 4$, Fact \ref{fact:-X5} for $i = 5$, Fact \ref{fact:-X6} for $i = 6$, and Fact \ref{fact:-X7} for $i = 7$. 
It is explicitly given by
\begin{equation}
\label{eq:-X_4}
    [-X_{4},X_{j}] = \begin{cases} 
    0, &\text{if $j \in \llbracket 1, 7 \rrbracket \cup \{ 9\}$,}
    \\
    -2 X_{4}, &\text{if $j = 8$,}
    \\
    \alpha\beta|(ab+bc)-\alpha\gamma|ac, &\text{if $j=10$,}
    \\
    -\alpha\beta|ab-\alpha\gamma|ba, &\text{if $j=11$,}
    \\
    \alpha_2|(bc-ba-ac) , &\text{if $j= 12$,}
    \\
    3\alpha_2\gamma|aba-5\alpha\beta_2|bac , &\text{if $j = 13$,}
\end{cases}
\end{equation}
and 
\begin{equation}
\label{eq:-X_5}
    [-X_{5},X_{j}] = \begin{cases} 
    0, &\text{if $j \in \llbracket 1, 7 \rrbracket  \cup \{ 10 \}$,}
    \\
    -2 X_{5}, &\text{if $j = 8$,}
    \\
    -\alpha\beta|bc+\alpha\gamma|(ba+ac), &\text{if $j=9$,}
    \\
    -\alpha\beta|ab-\alpha\gamma|ba, &\text{if $j=11$,}
    \\
    -\beta_2|(ab+bc -ac) , &\text{if $j= 12$,}
    \\
    -5\alpha_2\beta|abc+3\alpha_2\gamma|aba, &\text{if $j = 13$,}
\end{cases}
\end{equation}
as well as
\begin{equation}
\label{eq:-X_6}
    [-X_{6},X_{j}] = \begin{cases} 
    0, &\text{if $j \in \llbracket 1, 7 \rrbracket \cup \{ 11 \}$,}
    \\
    -2 X_{6}, &\text{if $j = 8$,}
    \\
    \alpha\beta|(bc-ab)-\alpha\gamma|(2ba+ac), &\text{if $j=9$,}
    \\
    -\alpha\beta|(ab+bc)+\alpha\gamma|ac, &\text{if $j=10$,}
    \\
    \gamma_2|(bc-ba-ac)-\alpha\beta|ba-\alpha\gamma|ab , &\text{if $j= 12$,}
    \\
    -10 \alpha_2\gamma|aba-2\alpha\beta_2|bac, &\text{if $j = 13$,}
\end{cases}
\end{equation}
together with 
\[   
[-X_7,X_{13}]=\alpha_3|(abc-2aba)+\alpha_2\beta|(2bac-6aba)-\alpha_2\gamma|(abc+4bac)+5\alpha\beta_2|(abc-aba),
\] 
and
\begin{equation}
\label{eq:-X_7}
    [-X_{7},X_{j}] = \begin{cases} 
    0, &\text{if $j \in \llbracket 1, 7 \rrbracket $,}
    \\
    -2 X_{7}, &\text{if $j = 8$,}
    \\
    \alpha_2|(bc-ab-ac-2ba)-\alpha\beta|(ba+ac)+\alpha\gamma|bc, &\text{if $j=9$,}
    \\
    \alpha\beta|ac-\alpha\gamma|(ab+bc), &\text{if $j=10$,}
    \\
    \alpha\beta|ba+\alpha\gamma|ab, &\text{if $j=11$,}
    \\
    (\alpha\beta+\alpha\gamma)|(bc-ba-ac), &\text{if $j= 12$.}
\end{cases}
\end{equation}

Next, we will compute 
$\varphi^{i}=[-Y_i, Y_{14}]= \rho^{i} Y_{14}-Y_{14}\rho_4^{i}$ for $i\in \llbracket 4,7 \rrbracket$. 
Using Fact \ref{fact:-X4}, it is easy to see that 
$\varphi^{4}(1|\beta_4|1)=\varphi^{4}(1|\gamma_4|1)=\varphi^{4}(\omega_1 1|\epsilon^!|1)=0$, whereas Fact \ref{fact:-X5} gives us immediately the identities $\varphi^{5}(1|\alpha_4|1)=\varphi^{5}(1|\gamma_4|1)=\varphi^{5}(\omega_1 1|\epsilon^!|1)=0$, Fact \ref{fact:-X6} tells us that 
$\varphi^{6}(1|\alpha_4|1)=\varphi^{6}(1|\beta_4|1)=\varphi^{6}(\omega_1 1|\epsilon^!|1)=0$, and Fact \ref{fact:-X7} yields that 
$\varphi^{7}(1|\beta_4|1)=\varphi^{7}(1|\gamma_4|1)=0$ and $\varphi^{7}(\omega_1 1|\epsilon^!|1)=3(bc-ba-ac)$. 
For $i \in \{4 ,7\}$ and $u\in  \mathcalboondox{B}^{!*}_{4}\setminus \{ \beta_4,\gamma_4 \} $ (resp., $i = 5$ and $u\in  \mathcalboondox{B}^{!*}_{4}\setminus \{ \alpha_4,\gamma_4 \} $, $i=6$ and $u\in  \mathcalboondox{B}^{!*}_{4}\setminus \{ \alpha_4,\beta_4 \} $), we have that 
\begin{equation}
    \begin{split}
        \varphi^{i}(1|u|1)
         = (\rho^{i} Y_{14}-Y_{14}\rho_4^{i})(1|u|1)
         =-Y_{14}(q_{u}^{i})
         =-\lambda^u_{i},
    \end{split}
\end{equation}
where $\lambda^u_{i}\in\Bbbk$ is the coefficient of $\omega_11|\epsilon^!|1$ in $q_{u}^{i}$.
It is easy to check that 
\begin{align*}
    \rho_3^{4}\delta^b_4(1|\alpha_4|1)&=bac|\alpha_3|1+abc|\beta_3|1+aba|\alpha_2\gamma|1+v_{\alpha_4}^{4}, 
    \\
    \rho_3^{4}\delta^b_4(1|\alpha_3\beta|1)&=-2aba|\alpha_3|1+3abc|\gamma_3|1+bac|\alpha_2\beta|1+abc|\alpha_2\gamma|1+2aba|\alpha\beta_2|1+v_{\alpha_3\beta}^{4}, 
    \\
    \rho_3^{4}\delta^b_4(1|\alpha_3\gamma|1)&=-2aba|\beta_3|1+2bac|\gamma_3|1+2aba|\alpha_2\beta|1+bac|\alpha_2\gamma|1+abc|\alpha\beta_2|1+v_{\alpha_3\gamma}^{4},
    \\
    \rho_3^{4}\delta^b_4(1|\alpha_2\beta_2|1)&=-2bac|\alpha_3|1-2abc|\beta_3|1+4aba|\gamma_3|1+2abc|\alpha_2\beta|1+2bac|\alpha\beta_2|1+v_{\alpha_2\beta_2}^{4},
\end{align*}
and 
\begin{align*}
    \rho_3^{5}\delta^b_4(1|\beta_4|1)&=bac|\alpha_3|1+abc|\beta_3|1+aba|\alpha_2\gamma|1+v_{\beta_4}^{5}, 
    \\
    \rho_3^{5}\delta^b_4(1|\alpha_3\beta|1)&=-2aba|\alpha_3|1+2abc|\gamma_3|1+bac|\alpha_2\beta|1+aba|\alpha\beta_2|1+v_{\alpha_3\beta}^{5}, 
    \\
    \rho_3^{5}\delta^b_4(1|\alpha_3\gamma|1)&=abc|\alpha_3|1-2aba|\beta_3|1+2bac|\gamma_3|1+2aba|\alpha_2\beta|1+bac|\alpha_2\gamma|1+abc|\alpha\beta_2|1+v_{\alpha_3\gamma}^{5},
    \\
    \rho_3^{5}\delta^b_4(1|\alpha_2\beta_2|1)&=-3bac|\alpha_3|1-2abc|\beta_3|1+3aba|\gamma_3|1+2abc|\alpha_2\beta|1-aba|\alpha_2\gamma|1+bac|\alpha\beta_2|1
    \\
    & \phantom{= \;}
    +v_{\alpha_2\beta_2}^{5},
\end{align*}
as well as 
\begin{align*}
    \rho_3^{6}\delta^b_4(1|\gamma_4|1)&=-abc|\beta_3|1+aba|\gamma_3|1+bac|\alpha\beta_2|1+v_{\gamma_4}^{6}, 
    \\
    \rho_3^{6}\delta^b_4(1|\alpha_3\beta|1)&=4aba|\alpha_3|1-2bac|\beta_3|1+2abc|\alpha_2\gamma|1+2aba|\alpha\beta_2|1+v_{\alpha_3\beta}^{6}, 
    \\
    \rho_3^{6}\delta^b_4(1|\alpha_3\gamma|1)&=-abc|\alpha_3|1+2aba|\beta_3|1+aba|\alpha_2\beta|1+bac|\alpha_2\gamma|1+v_{\alpha_3\gamma}^{6},
    \\
    \rho_3^{6}\delta^b_4(1|\alpha_2\beta_2|1)&=4bac|\alpha_3|1+4abc|\beta_3|1+4aba|\alpha_2\gamma|1
    +v_{\alpha_2\beta_2}^{6},
\end{align*}
together with 
\begin{align*}
    \rho_3^{7}\delta^b_4(1|\alpha_4|1)&=(aba-abc)|\alpha_3|1+v_{\alpha_4}^{7}, 
    \\
    \rho_3^{7}\delta^b_4(1|\alpha_3\beta|1)&=-abc|\alpha_3|1+3bac|\alpha_3|1+3aba|\beta_3|1+2abc|\beta_3|1-aba|\gamma_3|1-2bac|\gamma_3|1
    \\
    & \phantom{= \; }
    -abc|\alpha_2\beta|1
    +2aba|\alpha_2\gamma|1 +v_{\alpha_3\beta}^{7}, 
    \\
    \rho_3^{7}\delta^b_4(1|\alpha_3\gamma|1)&=2aba|\alpha_3|1+2bac|\alpha_3|1+2abc|\beta_3|1-2bac|\beta_3|1-2aba|\gamma_3|1-2abc|\gamma_3|1
    \\
    & \phantom{= \; }
    -abc|\alpha_2\beta|1
    +aba|\alpha_2\gamma|1+abc|\alpha_2\gamma|1+aba|\alpha\beta_2|1-bac|\alpha\beta_2|1 +v_{\alpha_3\gamma}^{7},
    \\
    \rho_3^{7}\delta^b_4(1|\alpha_2\beta_2|1)&=aba|\alpha_3|1-abc|\alpha_3|1+aba|\beta_3|1-abc|\gamma_3|1
    +v_{\alpha_2\beta_2}^{7},
\end{align*}
where $v_u^{i}\in \oplus_{j\in \llbracket 0,2 \rrbracket } (A_j\otimes (A^!_{-3})^{*}\otimes A_{3-j})  $ if $i \in  \{4, 7\}$ and $u\in  \mathcalboondox{B}^{!*}_{4}\setminus \{ \beta_4,\gamma_4 \}$, or if $i = 5$ and $u\in  \mathcalboondox{B}^{!*}_{4}\setminus \{ \alpha_4,\gamma_4 \}$, or if $i= 6$ and $u\in  \mathcalboondox{B}^{!*}_{4}\setminus \{ \alpha_4,\beta_4 \}$. 

Since, $q_{u}^{i}$ is of the form $q_{u}^{i}=B_u^{i}+\lambda^u_{i}\omega_11|\epsilon^!|1$ by degree reasons, where $B_u^{i}\in K^b_4$, 
and we have by definition that $\delta^b_4(q_u^{i})=\rho_3^{i}\delta^b_4(1|u|1)$, 
we see that  
\begin{equation}
\label{eq:B lambda -X4}
    \begin{split}
        d^b_4(B_u^{i})=\delta^b_4(q_{u}^{i})-\lambda^u_{i} f^b_0(1|\epsilon^!|1)=\rho_3^{i}\delta^b_4(1|u|1)-\lambda^u_{i} f^b_0(1|\epsilon^!|1) 
    \end{split}
\end{equation}
for $i \in \llbracket 4, 7 \rrbracket$. 
Using the explicit expression of the differential $d^b_4$ given in \cite{es zl}, Fact 3.1, 
it is clear that the coefficients of $aba|\gamma_3|1$ and $aba|\alpha_2\gamma|1$ in $d^b_4(B)$ coincide for all $B \in  K^b_4$. 
Comparing the coefficients of $aba|\gamma_3|1$ and $aba|\alpha_2\gamma|1$ in both sides of the equation \eqref{eq:B lambda -X4}, 
together with the expression of $f^b_0(1|\epsilon^!|1)$ given in \cite{es zl}, (3.2),
we get
\begin{equation}
\label{eq:lambda -X4}
    \begin{split}
        \lambda^{\alpha_4}_{4}=\lambda^{\beta_4}_{5} &= - \lambda^{\gamma_4}_{6} = 1/3, \hskip 2mm \lambda^{\alpha_3\beta}_{i}=\lambda^{\alpha_3\gamma}_{i}=0, \hskip 2mm \lambda^{\alpha_2\beta_2}_{i}= - \tau_{i} 4/3,
        \\
        &\lambda^{\alpha_{4}}_{7} = \lambda^{\alpha_{2} \beta_{2}}_7 = 0, \hskip 2mm \text{ and } \hskip 2mm \lambda^{\alpha_{3} \beta}_{7} = \lambda^{\alpha_{3} \gamma}_7 = 1,
    \end{split}
\end{equation} 
for $i \in \llbracket 4, 6 \rrbracket$, where $\tau_{i} = 1$ if $i \in \llbracket 4, 5 \rrbracket$ and $\tau_{6} = -1$. 
Hence, we obtain that 
\begin{equation}
\label{eq:lambda -X4bis}
    \begin{split}
&\varphi^{4}(1|\alpha_4|1)=-1/3, \hskip 2mm 
\varphi^{4}(1|\alpha_3\beta|1)=\varphi^{4}(1|\alpha_3\gamma|1)=0 \hskip 1mm \text{  
and } \hskip 1mm
\varphi^{4}(1|\alpha_2\beta_2|1)=4/3,
\\
&\varphi^{5}(1|\beta_4|1)=-1/3, \hskip 2mm
\varphi^{5}(1|\alpha_3\beta|1)=\varphi^{5}(1|\alpha_3\gamma|1)=0 
\hskip 1mm \text{  
and } \hskip 1mm 
\varphi^{5}(1|\alpha_2\beta_2|1)=4/3,
\\
&\varphi^{6}(1|\gamma_4|1)=1/3, \hskip 5mm
\varphi^{6}(1|\alpha_3\beta|1)=\varphi^{6}(1|\alpha_3\gamma|1)=0 \hskip 1mm \text{  
and } \hskip 1mm \varphi^{6}(1|\alpha_2\beta_2|1)=-4/3, 
\\
&\varphi^{7}(1|\alpha_4|1)=\varphi^{7}(1|\alpha_2\beta_2|1)=0 \hskip 5mm \text{ and } \hskip 5mm \varphi^{7}(1|\alpha_3\beta|1)=\varphi^{7}(1|\alpha_3\gamma|1)=-1.
    \end{split}
\end{equation} 
In consequence, we get 
\begin{equation}
\label{eq:lambda -X4bisbis}
    \begin{split}
    [-X_4, X_{14}]&=(4/3)\alpha_2\beta_2|1-(1/3)\alpha_4|1, 
    \\
    [-X_5, X_{14}]&=(4/3)\alpha_2\beta_2|1-(1/3)\beta_4|1, 
    \\
    [-X_6, X_{14}]&=(1/3)\gamma_4|1-(4/3)\alpha_2\beta_2|1,
    \\
    [-X_7, X_{14}]&=-(\alpha_3\beta+\alpha_3\gamma)|1+3\omega^*_1\epsilon^!|(bc-ba-ac)
\end{split}
\end{equation} 

Using the coboundaries 
$g^2_{j,2}\in \tilde{\mathfrak{B}}^2_2$ for $j\in \llbracket 4,6\rrbracket$ and $e^3_{k,3}\in \tilde{\mathfrak{B}}^3_3$ for $k\in \llbracket 7,8\rrbracket$
given in \cite{es zl}, Subsubsection 5.3.1,  \eqref{eq:some cup products1} as well as the identities 
\begin{equation}
\label{eq:some cup products 3}
\begin{split}
    \alpha_2\beta|abc=X_4X_{10}, \hskip 2mm
    \alpha_4|1=X_9^2, \hskip 2mm \text{ and } \hskip 2mm
    \alpha_2\beta_2|1=X_9X_{10},
    \end{split}
\end{equation}
which follow from \cite{es zl}, Fact 6.3 and (6.2), we can rewrite several brackets as 
\begin{align*}
    [-X_4, X_{10}]&=\alpha\beta|(ab+bc)-\alpha\gamma|ac-g^2_{5,2}=0, 
    \\
    [-X_4,X_{11}]&=-\alpha\beta|ab-\alpha\gamma|ba+g^2_{4,2}=0,
    \\
    [-X_4,X_{12}]&=\alpha_2|(bc-ba-ac)-g^2_{6,2}=-2\alpha_2|(ab+ba)=-2X_1X_9, 
    \\
    [-X_4,X_{13}]&=3\alpha_2\gamma|aba-5\alpha\beta_2|bac -5e^3_{7,3}-3e^3_{8,3}=-8\alpha_2\beta|abc=-8X_4X_{10}, 
    \\
    [-X_4,X_{14}]&=(4/3)\alpha_2\beta_2|1-(1/3)\alpha_4|1=(4/3)X_9X_{10}-(1/3)X_9^2.
\end{align*}
Analogously, using the coboundaries 
$g^2_{j,2}\in \tilde{\mathfrak{B}}^2_2$ for $j \in \{ 4,5,7 \}$ and $e^3_{8,3}\in \tilde{\mathfrak{B}}^3_3$
given in \cite{es zl}, Subsubsection 5.3.1, \eqref{eq:some cup products1}, \eqref{eq:some cup products 3} and the identity 
    $\beta_4|1=X^2_{10}$
given in \cite{es zl}, Fact 6.3, 
we get that 
\begin{align*}
    [-X_5, X_{9}]&=-\alpha\beta|bc+\alpha\gamma|(ba+ac)-g^2_{4,2}+g^2_{5,2}=0, 
    \\
    [-X_5,X_{11}]&=-\alpha\beta|ab-\alpha\gamma|ba+g^2_{4,2}=0,
    \\
    [-X_5,X_{12}]&=-\beta_2|(ab+bc-ac) -g^2_{7,2}=-2\beta_2|(ab+ba)=-2X_1X_{10},
    \\
    [-X_5,X_{13}]&=-5\alpha_2\beta|abc+3\alpha_2\gamma|aba -3e^3_{8,3}=-8\alpha_2\beta|abc=-8X_4X_{10}, 
    \\
    [-X_5,X_{14}]&=(4/3)\alpha_2\beta_2|1-(1/3)\beta_4|1=(4/3)X_9X_{10}-(1/3)X_{10}^2.
\end{align*} 
Moreover, 
using the coboundaries 
$g^2_{j,2}\in \tilde{\mathfrak{B}}^2_2$ for $j
\in \llbracket 1,5 \rrbracket$ and $e^3_{k,3}\in \tilde{\mathfrak{B}}^3_3$ for $k
\in \llbracket 7,8 \rrbracket$
given in \cite{es zl}, Subsubsection 5.3.1, \eqref{eq:some cup products1}, \eqref{eq:some cup products 3} and the identity
    $\gamma_4|1=X^2_{11}$
given in \cite{es zl}, Fact 6.3,
we obtain
\begin{align*}
    [-X_6, X_{9}]&=\alpha\beta|(bc-ab)-\alpha\gamma|(2ba+ac)+2g^2_{4,2}-g^2_{5,2}=0,
    \\
    [-X_6,X_{10}]&=-\alpha\beta|(ab+bc)+\alpha\gamma|ac+g^2_{5,2}=0,
    \\
    [-X_6,X_{12}]&=\gamma_2|(bc-ba-ac)-\alpha\beta|ba-\alpha\gamma|ab -2g^2_{1,2}-2g^2_{2,2} -g^2_{3,2}=-2(\alpha_2+\beta_2)|(ab+ba)
    \\ 
    &=-2X_1(X_9+X_{10}), 
    \\
    [-X_6,X_{13}]&=-10 \alpha_2\gamma|aba-2\alpha\beta_2|bac-2e^3_{7,3} +10e^3_{8,3}=8\alpha_2\beta|abc=8X_4X_{10}, 
    \\
    [-X_6,X_{14}]&=(1/3)\gamma_4|1-(4/3)\alpha_2\beta_2|1=(1/3)X_{11}^2-(4/3)X_9X_{10}.
\end{align*}
Finally, using the coboundaries 
$g^2_{j,2}\in \tilde{\mathfrak{B}}^2_2$ for $j
\in \llbracket 1,6 \rrbracket \setminus\{3\} $ and  
$e^3_{k,3}\in \tilde{\mathfrak{B}}^3_3$ for $k
\in \llbracket 1,4 \rrbracket \cup \llbracket 9,10 \rrbracket $
given in \cite{es zl}, Subsubsection 5.3.1, \eqref{eq:some cup products1} and 
\begin{equation}
\label{eq:some cup products X7}
\begin{split}
   & \alpha_3|(aba-abc)  =X_7X_9, \quad 
    \alpha_3|aba+\beta_3|bac =X_7(X_9+X_{10})-2X_6X_{12}, 
    \\ 
   & (\alpha_3+\beta_3)|aba =X_{6}X_{12},\quad 
   (\alpha_3\beta+\alpha_3\gamma)|1+3\omega_1\epsilon^!|(ba-bc+ac)=X_9X_{12}, 
    \end{split}
\end{equation}
given in \cite{es zl}, Fact 6.3, or in \cite{es zl}, (6.2), 
together with the second element in the fifth and the eighth line, the first element in the ninth line of \cite{es zl} (6.5), 
we have that
\begin{align*}
    [-X_7, X_{9}]&=\alpha_2|(bc-ab-ac-2ba)-\alpha\beta|(ba+ac)+\alpha\gamma|bc-g^2_{1,2}-g^2_{6,2}=-4\alpha_2|(ab+ba)
    \\
    &=-4X_1X_9,
    \\
    [-X_7,X_{10}]&=\alpha\beta|ac-\alpha\gamma|(ab+bc)-g^2_{2,2}=-\beta_2|(ab+ba)=-X_1X_{10}, 
    \\
    [-X_7,X_{11}]&=\alpha\beta|ba+\alpha\gamma|ab+g^2_{1,2}+g^2_{2,2}=(\alpha_2+\beta_2)|(ab+ba)=X_1(X_9+X_{10}),
    \\
    [-X_7,X_{12}]&=(\alpha\beta+\alpha\gamma)|(bc-ba-ac)-g^2_{1,2}+g^2_{4,2}-g^2_{5,2}=-\alpha_2|(ab+ba)=-X_1X_9,
    \\ 
    [-X_7,X_{13}]&=\alpha_3|(abc-2aba)+\alpha_2\beta|(2bac-6aba)-\alpha_2\gamma|(abc+4bac)+5\alpha\beta_2|(abc-aba)
    \\
    & \phantom{= \; }
    -(1/3)(23e^3_{1,3}+11e^3_{2,3}-32e^3_{3,3}-16e^3_{4,3}-5e^3_{9,3}+6e^3_{10,3})
    \\
    &=(8/3)\alpha_3|(aba-abc)-(32/3)(\alpha_3+\beta_3)|aba-(16/3)(\alpha_3|aba+\beta_3|bac)
    \\
    &=-(8/3)X_7(X_9+2X_{10})=-4X_1X_{13}+4X_2X_{13}+8X_4X_{12}, 
    \\
    [-X_7,X_{14}]&=-(\alpha_3\beta+\alpha_3\gamma)|1+3\omega^*_1\epsilon^!|(bc-ba-ac)=-X_9X_{12}. 
\end{align*}
The proposition is thus proved.
\end{proof}

\begin{rk} 
\label{remark:grad-vanishing-1}
Note that vanishing of $[X_{i},X_{j}]$ for $i \in \llbracket 4 , 7 \rrbracket$ and $j \in \llbracket 3 , 7 \rrbracket$ in Proposition \ref{prop:X4} 
also follows from a simple degree argument based on Corollary \ref{cor:gr Hom} and the Hilbert series of the Hochschild cohomology given in \cite{es zl}, Cor. 5.9. 
\end{rk}


\subsection{\texorpdfstring{Gerstenhaber brackets}{Gerstenhaber brackets}}
\label{sub:Gerstenhaber brackets} 

We will finally compute the remaining Gerstenhaber brackets. 
We start with the following result, which is a sort of descending argument. 

\begin{lem}
\label{lemma:tech}
Let $H = \oplus_{n \in \NN_{0}} H^{n}$ be a Gerstenhaber algebra with bracket $[\hskip 0.6mm , ]$. 
Let $x \in H^{n+1}$, $y \in H^{n}$, $a_{x} \in H^{0}$, $a_{y} \in H^{1}$ and $z \in H^{m}$ satisfy that $a_x x = a_{y} y$, and there is a vector subspace $M \subseteq H^{n+m-1}$ such that $[y,z] \in M$ and the map $\mu_{a_{y}} : M \rightarrow H^{n+m}$ sending $v\in M$ to $a_{y} v$ is injective. 
Then, $[y,z]$ is the unique element $v \in M$ satisfying that $a_{y} v$ coincides with
 \begin{equation}
\label{eq:tech0}
        (-1)^{m-1} \big( a_x[x,z] + [a_{x},z]x - [a_{y},z] y\big).
\end{equation}  
\end{lem}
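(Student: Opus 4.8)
The plan is to start from the Gerstenhaber-algebra axioms, particularly the Poisson-type compatibility \eqref{property 2}, and apply the bracket $[-,z]$ to the given relation $a_x x = a_y y$. The whole proof is essentially an algebraic identity coming from the Leibniz rule for the bracket over the cup product, combined with the injectivity hypothesis to isolate $[y,z]$ uniquely. Since $[y,z]$ already lies in $M$ by assumption, and $\mu_{a_y}$ is injective on $M$, it suffices to show that $a_y [y,z]$ equals the expression \eqref{eq:tech0}; uniqueness is then immediate from injectivity.

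\medskip

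First I would expand both sides of $[a_x x, z] = [a_y y, z]$ using \eqref{property 2}. Recall that \eqref{property 2} reads $[u\cup v, z] = [u,z]\cup v + (-1)^{m(p-1)} u \cup [v,z]$ for $u \in H^m$, $v \in H^n$, $z \in H^p$. Here $a_x \in H^0$ and $x \in H^{n+1}$, so applying the rule to the left-hand side gives
\begin{equation}
[a_x x, z] = [a_x, z]\, x + (-1)^{0\cdot(m-1)} a_x [x,z] = [a_x,z]\,x + a_x[x,z].
\end{equation}
On the right-hand side, $a_y \in H^1$ and $y \in H^n$, so
\begin{equation}
[a_y y, z] = [a_y, z]\, y + (-1)^{1\cdot(m-1)} a_y [y,z] = [a_y,z]\,y + (-1)^{m-1} a_y[y,z].
\end{equation}
Here I am using graded-commutativity of the cup product to write the terms in the order displayed; one must check the signs coming from the convention in \eqref{property 2} carefully, since $a_x$ has cohomological degree $0$ and $a_y$ has degree $1$, which is precisely what produces the factor $(-1)^{m-1}$ on the $a_y[y,z]$ term and the absence of such a factor on the $a_x[x,z]$ term.

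\medskip

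Next I would equate the two expansions, which yields $[a_x,z]\,x + a_x[x,z] = [a_y,z]\,y + (-1)^{m-1} a_y[y,z]$. Solving for the term containing $[y,z]$ gives
\begin{equation}
(-1)^{m-1} a_y [y,z] = a_x[x,z] + [a_x,z]\,x - [a_y,z]\,y,
\end{equation}
and multiplying through by $(-1)^{m-1}$ (noting $(-1)^{2(m-1)}=1$) produces exactly $a_y[y,z] = (-1)^{m-1}\big(a_x[x,z] + [a_x,z]x - [a_y,z]y\big)$, which is \eqref{eq:tech0}. Since $[y,z]\in M$ and $\mu_{a_y}$ is injective on $M$, this element is the unique $v\in M$ with $a_y v$ equal to \eqref{eq:tech0}, as claimed.

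\medskip

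**The main obstacle** I expect is purely bookkeeping with the Koszul-type signs: one must confirm that the degree inputs into \eqref{property 2} are assigned correctly (which element plays the role of $u$ versus $v$, and the value of $p$ which is the degree of $z$), and that graded-commutativity is invoked with the right sign when reordering $[a_x,z]x$ and $[a_y,z]y$. A subtle point worth flagging is that $[a_x,z]$ lands in $H^{m-1}$ and $[a_y,z]$ lands in $H^m$, so the products $[a_x,z]x$ and $[a_y,z]y$ both land in $H^{n+m}$, consistent with the target of $\mu_{a_y}$; verifying this degree consistency throughout is the only place where an error could creep in, but the computation itself is short and follows mechanically from the two axioms in Lemma \ref{property of G}.
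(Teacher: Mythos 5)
Your proposal is correct and follows essentially the same route as the paper's proof: expand $[a_xx,z]$ and $[a_yy,z]$ via \eqref{property 2}, equate them using $a_xx=a_yy$, solve for $a_y[y,z]$, and invoke injectivity of $\mu_{a_y}$ on $M$. (One minor remark: no appeal to graded-commutativity is actually needed, since \eqref{property 2} already produces the terms in the displayed order.)
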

\begin{proof} 
By \eqref{property 2} we get that 
\begin{equation}
\label{eq:proof tech}
      [a_{x}x,z]=[a_{x},z]x + a_{x}[x,z]
\text{ and } [a_{y}y,z]=[a_{y},z]y+(-1)^{m-1}a_{y}[y,z].
\end{equation}
These identities together with $a_xx=a_{y}y$ imply
\begin{equation}
\label{eq:tech}
      a_{y}[y,z] = (-1)^{m-1} \big( a_x[x,z] + [a_{x},z]x - [a_{y},z] y\big).
\end{equation} 
Hence, the right member  
is in the image of the injective map $\mu_{a_{y}}$, and the result follows.
\end{proof}

\begin{rk}
We will apply the previous lemma to the case when $H = \operatorname{HH}^{\bullet}(A)$ is the Hochschild cohomology of a graded algebra $A$, so $H$ is endowed with an extra grading, called internal (see Corollary \ref{cor:gr Hom}), the elements $x,y,z, a_{x}, a_{y}$ are homogeneous for both gradings and $M \subseteq H^{n+m-1}$ is the subspace of internal degree equal to the sum of those of $y$ and $z$. 
In this case, the methods given in Subsections \ref{subsection:Gerstenhaber brackets of HH^0} and \ref{subsection:Gerstenhaber brackets of HH^1} allow to compute the last two brackets of \eqref{eq:tech0}, whereas the first one will usually vanish by degree reasons. 
\end{rk}

\begin{prop}
\label{prop:remaining brackets} 
Let $A =\FK(3)$ be the Fomin-Kirillov algebra on $3$ generators. 
Then, we have the Gerstenhaber brackets $[X_i,X_j]=0 $ for $i,j\in\llbracket 9,14 \rrbracket \setminus \{13\}$ 
and 
\begin{equation}
\label{eq:X_13 X_n}
    [X_{13},X_{j}] = \begin{cases} 
    2 X^2_{j}, &\text{if $j\in \llbracket 9,11\rrbracket $,}
    \\
    -6X_1X_{14}+6X_2X_{14}+2X_9X_{12}, &\text{if $j = 12$,}
    \\
    0, &\text{if $j = 13$,}
    \\
    4(X_9+X_{10}+X_{11})X_{14}, &\text{if $j =14$.}
\end{cases}
\end{equation}
\end{prop}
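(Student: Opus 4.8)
The plan is to separate the statement into the vanishing brackets and the five genuinely non-trivial brackets involving $X_{13}$, handling the former by pure degree considerations and the latter through Lemma \ref{lemma:tech}.

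First I would dispatch every claimed vanishing $[X_i,X_j]=0$ with $i,j\in\llbracket 9,14\rrbracket\setminus\{13\}$ by a degree argument. By Corollary \ref{cor:gr Hom} the bracket preserves internal degree, so $[X_i,X_j]$ lies in the component of the appropriate $\HH^{\bullet}(A)$ of internal degree $\mathfrak{a}(X_i)+\mathfrak{a}(X_j)$, where, reading off from Proposition \ref{prop:X_8}, $\mathfrak{a}(X_9)=\dots=\mathfrak{a}(X_{12})=-2$ and $\mathfrak{a}(X_{14})=-6$. Thus the brackets among $X_9,\dots,X_{12}$ land in $H^3_{-1}$, those of $X_9,\dots,X_{12}$ with $X_{14}$ land in $H^5_{-3}$, and $[X_{14},X_{14}]$ lands in $H^7_{-5}$. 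Each of these vanishes by the recursion of Proposition \ref{H_{-1}}: for odd negative $m$ it identifies $H^n_m$ with $\omega^*_{(1-m)/2}H^{n+2m-2}_1$, and the cohomological degree $n+2m-2$ equals $-1,-3,-5$ respectively, hence the target is $0$. Finally $[X_{13},X_{13}]=0$ follows directly from the graded antisymmetry in \eqref{property 1}, which forces $2[X_{13},X_{13}]=0$ with $\operatorname{char}\Bbbk\neq 2$.

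For the five brackets $[X_{13},X_j]$ with $j\in\{9,10,11,12,14\}$ I would invoke Lemma \ref{lemma:tech} with $y=X_{13}\in\HH^3(A)$ and $z=X_j$, letting $M$ be the internal-degree-$(\mathfrak{a}(X_{13})+\mathfrak{a}(X_j))$ component of the relevant cohomology group (which contains $[X_{13},X_j]$ by Corollary \ref{cor:gr Hom}). The required input is a relation $a_x\,x=a_y\,X_{13}$ with $a_x\in\HH^0(A)$, $a_y\in\HH^1(A)$ and $x\in\HH^4(A)$, read off from the presentation of $\HH^{\bullet}(A)$ in Theorem \ref{thm: algebra structure of cohomology} (relations \cite{es zl}, (6.5)). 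A degree count is decisive here: matching internal degrees gives $\mathfrak{a}(a_x)=\mathfrak{a}(a_y)+4$, and since $\HH^0(A)\cong\Z(A)$ is concentrated in internal degrees $\leqslant 4$, the only possibility is $x=X_{14}$, $a_x\propto X_3$ and $a_y\propto X_8$. With this choice the first term of \eqref{eq:tech0}, namely $a_x[x,z]=a_x[X_{14},X_j]$, vanishes: for $j\leqslant 12$ this is exactly the degree vanishing of the previous paragraph, and for $j=14$ it is $[X_{14},X_{14}]=0$. Consequently Lemma \ref{lemma:tech} expresses $a_y[X_{13},X_j]$ solely through $[a_x,X_j]=[X_3,X_j]$ and $[a_y,X_j]=[X_8,X_j]$, both of which are already available from Proposition \ref{prop:bracket-H0-Xi} (combined with graded antisymmetry) and Proposition \ref{prop:X_8}.

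It then remains to solve for $[X_{13},X_j]$ and rewrite the answer in the generators. Recovering $[X_{13},X_j]$ from $a_y[X_{13},X_j]$ requires the injectivity of $\mu_{a_y}\colon M\to\HH^{\bullet}(A)$, $v\mapsto a_y v$, and this is where I expect the main obstacle to lie: it amounts to verifying, from the explicit bigraded algebra structure of Theorem \ref{thm: algebra structure of cohomology}, that multiplication by $X_8$ does not annihilate any nonzero class of $M$ (in particular that $X_8$ times the candidate values $2X_j^2$, $-6X_1X_{14}+6X_2X_{14}+2X_9X_{12}$, $4(X_9+X_{10}+X_{11})X_{14}$ is nonzero), which I would check by computing $X_8$ against a basis of $M$ and testing linear independence of the images. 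Once injectivity is established, $[X_{13},X_j]$ is the unique preimage under $\mu_{a_y}$ of the expression \eqref{eq:tech0}, and I would finish by reducing that cocycle to the stated combinations using the cup-product identities \eqref{eq:some cup products1}, \eqref{eq:some cup products 3} and \eqref{eq:some cup products X7} together with the relations \cite{es zl}, (6.5). As a robustness measure — and as a fallback should injectivity of $\mu_{X_8}$ fail on some $M$ — I would recompute each $[X_{13},X_j]$ with $j\leqslant 12$ in the alternative framing $y=X_j$, $z=X_{13}$, $x=X_{13}$ (using $[X_{13},X_{13}]=0$ and a relation $a_x\,X_{13}=a_y\,X_j$ with $a_x\in\{X_1,X_2\}$ and $a_y\in\{X_4,\dots,X_7\}$ of internal degree $2$), and cross-check the two answers through graded antisymmetry.
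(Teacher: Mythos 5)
Your proposal is correct and follows essentially the same route as the paper: the vanishing brackets by the internal-degree argument (you derive $H^3_{-1}=H^5_{-3}=H^7_{-5}=0$ from the recursion of Proposition \ref{H_{-1}}, where the paper equivalently cites the Hilbert series of \cite{es zl}, Cor.\ 5.9), $[X_{13},X_{13}]=0$ by \eqref{property 1}, and the five brackets $[X_{13},X_j]$ via Lemma \ref{lemma:tech} with exactly the paper's choices $x=X_{14}$, $y=X_{13}$, $a_x=6X_3$, $a_y=X_8$ (from the relation $6X_3X_{14}=X_8X_{13}$ in \cite{es zl}, (6.5)), the vanishing of $a_x[X_{14},X_j]$, and injectivity of $\mu_{X_8}$ on the internal-degree component checked against explicit bases using \cite{es zl}, (6.7)--(6.8). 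The injectivity verification you flag as the main obstacle is precisely what the paper carries out through the identities \eqref{eq:auxx}, so your plan is complete as stated.
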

\begin{proof}
Recall that, by Corollary \ref{cor:gr Hom}, 
the Gerstenhaber bracket satisfies that $[ \hskip 0.6mm , ]:H^{n_1}_{m_1}\times H^{n_2}_{m_2}\to H^{n_1+n_2-1}_{m_1+m_2-1}$, 
where $H^{n_i}_{m_i}$ has internal degree $m_i-n_i$ for $i=1,2$. 
Using this degree argument together with the Hilbert series of the Hochschild cohomology computed in \cite{es zl}, Cor. 5.9, we easily see that
$[X_i,X_j]=0$ for $i,j\in\llbracket 9,14 \rrbracket \setminus \{13\}$.  
Moreover, $[X_{13},X_{13}]=0$ by \eqref{property 1}. 

It remains to compute $[X_{13},X_{j}]$ for all $j \in \llbracket 9 , 14 \rrbracket \setminus \{ 13 \}$. 
Note first the identities 
\begin{equation} 
\label{eq:auxx}
\begin{split}
[X_{8},X_9] X_{13} - 6 [X_{3},X_{9}] X_{14} &=2X_9X_{13}+12X_4X_{14}=2X_8X_{9}^2,
\\ 
[X_{8},X_{10}] X_{13} - 6 [X_{3},X_{10}] X_{14}&=2X_{10}X_{13}+12X_5X_{14}=2X_8X_{10}^2,
\\ 
[X_{8},X_{11}] X_{13} - 6 [X_{3},X_{11}] X_{14} &=2X_{11}X_{13}-12X_6X_{14}=2X_8X_{11}^2, 
\\ 
[X_{8},X_{12}] X_{13} - 6 [X_{3}, X_{12}] X_{14} &=2X_{12}X_{13}-12X_7X_{14}+6X_1X_8X_{14}-6X_2X_8X_{14}
\\
&=2X_8X_{11}X_{12}+6X_1X_8X_{14}
=2X_8X_{12}^2-4X_8X_9X_{10}
\\
&=2X_8X_9X_{12}-6X_1X_8X_{14}+6X_2X_8X_{14},
\\ 
[X_{8},X_{14}] X_{13} - 6 [X_{3}, X_{14}] X_{14}&=4X_8(X_9+X_{10}+X_{11})X_{14},
\end{split}
\end{equation}
where the first equality of the first fourth lines as well as that of the last line follows from Propositions \ref{prop:bracket-H0-Xi} and \ref{prop:X_8}, 
and we used the first element of the seventh and the eighth line of \cite{es zl}, (6.5), as well as its last four elements, for the remaining equalities. 
The penultimate element of the ninth line of \cite{es zl}, (6.5), also tells us that $6X_3 X_{14} = X_8 X_{13} \in \HH^{\bullet}(A)$. 

Notice now that, by degree reasons, $[X_{13},X_j]\in H^4_0$ for $j\in\llbracket 9,12\rrbracket$ and $H^4_0$ is precisely the subspace of $\operatorname{HH}^{4}(A)$ spanned by the elements 
$X_9^2$, 
$X_{10}^2$, 
$X_{11}^2$, 
$ X_9X_{12}-3X_1X_{14}+3X_2X_{14}$, 
$X_9X_{10}$, 
$X_1X_{14}$
and  
$X_2X_{14}$. 
On the other hand, $[X_{13},X_{14}]\in H^6_{-2}=\omega^*_1 H^2_0$, by degree reasons, and $\omega^*_1 H^2_0$ is the subspace of $\operatorname{HH}^{4}(A)$ spanned by $X_9X_{14}$, $X_{10}X_{14}$, $X_{11}X_{14}$ and $X_{12}X_{14}$.
Let us denote by ${}^{j}M \subseteq \HH^{4}(A)$ the subspace given by $H^4_0$ if $j \in \llbracket 9 , 12 \rrbracket$ and by $H^{6}_{-2}$ if $j = 14$. 
Since the elements 
$X_8X_9^2$, 
$X_8X_{10}^2$, 
$X_8X_{11}^2$, 
$X_8X_9X_{12}-3X_1X_8X_{14}+3X_2X_8X_{14}$, 
$X_8X_9X_{10}$, 
$X_1X_8X_{14}$, 
and 
$X_2X_8X_{14}$
are linearly independent, by the second equalities of the first four lines of \eqref{eq:auxx} together with \cite{es zl}, (6.7) and (6.8), 
the map ${}^{j}M \rightarrow \HH^{5}(A)$ given by left multiplication by $X_{8}$ is injective 
for $j \in \llbracket 9 , 12 \rrbracket$. 
Similarly, the elements $X_8X_9X_{14}$, $X_8X_{10}X_{14}$, $X_8X_{11}X_{14}$ and $X_8X_{12}X_{14}$ are linearly independent, by \cite{es zl}, (6.8), 
so the map ${}^{14}M \rightarrow \HH^{7}(A)$ given by left multiplication by $X_{8}$ is also injective. 

Finally, applying Lemma \ref{lemma:tech} to $x = X_{14}$, $y = X_{13}$, $z = X_{j}$, $a_{x} = 6 X_{3}$, $a_{y} = X_{8}$ and $M = {}^{j}M$ for $j\in\llbracket 9,14\rrbracket \setminus \{ 13 \}$, together with the fact remarked at the beginning of the proof that $[X_{14},X_{j}] = 0$ and \eqref{eq:auxx}, the result follows. 
\end{proof}

We can summarize the calculations of the Gerstenhaber brackets on $\HH^{\bullet}(A)$ done in Propositions \ref{prop:bracket-H0-Xi}, \ref{prop:X_8}, \ref{prop:X4} 
and \ref{prop:remaining brackets} in the following table, where the brackets strictly below the diagonal are not displayed since they can be obtained using Lemma \ref{property of G}. 

\begin{table}[H]
	\begin{center}
        \resizebox{\textwidth}{24mm}{
       \begin{tabular}{|c|cccccccccccccc|}
		\hline
		 \diagbox[width=13mm,height=5.4mm]{$\rho$}{$\phi$}  & $X_1$ & $X_2$  & $X_3$ & $X_4$ & $X_5$ & $X_6$ & $X_7$ & $X_8$ & $X_9$ & $X_{10}$ & $X_{11}$ & $X_{12}$ & $X_{13}$ & $X_{14}$
        \\
        \hline
        $X_1$ & $0$ & $0$ & $0$ & $0$ & $0$ & $0$ & $0$ & $2X_1$ & $0$ & $0$ & $0$ & $0$ & $4X_1(X_9+X_{10})$ &  $0$
        \\
        $X_2$ &  & $0$ & $0$ & $0$ & $0$ & $0$ & $0$ & $2X_2$ & $0$ & $0$  & $0$ & $0$ & $4X_1X_{10}$ & $0$
        \\
		$X_3$ &  &  & $0$ & $0$ & $0$ & $0$ & $0$ & $4X_3$ & $-2X_4$ & $-2X_5$  & $2X_6$ & $2X_7-X_1X_8+X_2X_8$ & $4X_3(X_9+X_{10}+X_{11})$ &  $X_{13}-(2/3)X_8(X_9+X_{10}+X_{11}) $
        \\
		$X_4$ &  &  &  & $0$ & $0$& $0$& $0$& $2X_4$& $0$& $0$& $0$& $2X_1X_9$& $8X_4X_{10}$&  $(1/3)X_9^2-(4/3)X_9X_{10}$
        \\
        $X_5$ &  &  & &  & $0$ & $0$ & $0$ & $2X_5$ & $0$ & $0$ & $0$ & $2X_1X_{10}$ &$8X_4X_{10}$ & $(1/3)X_{10}^2-(4/3)X_9X_{10}$ 
        \\
        $X_6$ &  &  &  &  &  & $0$ & $0$ & $2X_6$ & $0$ & $0$ & $0$ & $2X_1(X_9+X_{10})$ & $-8X_4X_{10}$ & $(4/3)X_9X_{10}-(1/3)X_{11}^2$ 
        \\
        $X_7$ &  &  &  &  &  &  & $0$ & $2X_7$ & $4X_1X_9$ & $X_1X_{10}$ & $-X_1(X_9+X_{10})$ & $X_1X_9$ & $4X_1X_{13}-4X_2X_{13}-8X_4X_{12}$ & $X_9X_{12}$ 
        \\
        $X_8$ &  &  &  &  &  &  &  & $0$ & $2X_9$ & $2X_{10}$ & $2X_{11}$ & $2X_{12}$ & $2X_{13}$ & $6X_{14}$
        \\
        $X_9$ &  &  & &  &  &  &  &  & $0$ & $0$ & $0$ & $0$ & $-2X_9^2$  & $0$
        \\
        $X_{10}$ &  &  & &  &  &  &  &  &  & $0$ & $0$ & $0$ & $-2X_{10}^2$  & $0$
        \\
        $X_{11}$ &  &  & &  &  &  &  &  &  &  & $0$ & $0$ & $-2X_{11}^2$ & $0$
        \\
        $X_{12}$ &  &  & &  &  &  &  &  &  &  &  & $0$ & $6X_1X_{14}-6X_2X_{14}-2X_9X_{12}$   & $0$
        \\
        $X_{13}$ &  &  &  &   &  &  &  &  &  &  &  &  & $0$ &  $4(X_9+X_{10}+X_{11})X_{14}$
        \\
        $X_{14}$ &  &  & &  &  &  &  &  &  &  &  &  &  & $0$
        \\
        \hline
	\end{tabular}
    }
	\end{center}
	\caption{Gerstenhaber brackets $[\rho,\phi]$.}	
	\label{table:Gerstenhaber brackets}
\end{table}

\begin{prop}
\label{prop:no generator}
There is no generator of the Gerstenhaber bracket on the Hochschild cohomology $\HH^{\bullet}(A)$ of $A=\FK(3)$, \textit{i.e.} there is no map $\Delta:\HH^{\bullet}(A)\to \HH^{\bullet}(A)$ of degree $-1$ such that 
\begin{equation}
    \label{eq:Batalin-Vilkovisky}
\begin{split}
[x,y]=(-1)^{|x|}\big(\Delta(xy)-\Delta(x)y-(-1)^{|x|}x\Delta (y)\big)
\end{split}
\end{equation}
for all homogeneous elements $x,y\in\HH^{\bullet}(A)$, where $|x|$ is the cohomological degree of $x$. 
In particular, there is no Batalin-Vilkovisky structure on $\HH^{\bullet}(A)$ inducing the Gerstenhaber bracket.
\end{prop}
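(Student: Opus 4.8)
The plan is to argue by contradiction: I assume a degree $-1$ map $\Delta$ satisfying \eqref{eq:Batalin-Vilkovisky} exists and show that the constraints it must satisfy are incompatible. First I would reduce to the case where $\Delta$ is homogeneous of internal degree $0$. Since both the cup product and the Gerstenhaber bracket preserve the internal degree (Corollary \ref{cor:gr Hom}), I would decompose $\Delta = \sum_{d} \Delta_{d}$ into its internal-degree-homogeneous components; extracting from \eqref{eq:Batalin-Vilkovisky} the part whose internal degree equals that of $[x,y]$ (namely the sum of the internal degrees of $x$ and $y$) shows that the internal-degree-$0$ component $\Delta_{0}$ alone already reproduces the bracket. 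Hence I may assume that $\Delta$ preserves the internal degree while lowering the cohomological degree by one.

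Next I would pin down $\Delta$ on the fourteen generators by a bidegree count. Recording each $X_{i}$ in its bidegree (cohomological, internal) and using the description of the bigraded pieces $H^{n}_{m}$ (Proposition \ref{H_{-1}} and \cite{es zl}, Cor. 5.3) together with $\HH^{0}(A) = \Z(A)$, I find that the targets of $\Delta$ on $X_{1}, X_{2}, X_{3}, X_{9}, X_{10}, X_{11}, X_{12}, X_{14}$ all vanish (they land in $H^{-1}$, in $H^{1}_{-1}$, or in $H^{3}_{-3}$, all of which are zero), so $\Delta(X_{i}) = 0$ for those indices, while $\Delta(X_{i}) \in H^{0}_{2} = \langle X_{1}, X_{2}\rangle$ for $i \in \{4,5,6,7\}$, $\Delta(X_{8}) \in H^{0}_{0} = \Bbbk \cdot 1$, and $\Delta(X_{13}) \in H^{2}_{0} = \langle X_{9}, X_{10}, X_{11}, X_{12}\rangle$. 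Thus $\Delta$ is governed by finitely many scalars, and since \eqref{eq:Batalin-Vilkovisky} expresses $\Delta$ on a product through $\Delta$ on the factors and their bracket, $\Delta$ is determined on all of $\HH^{\bullet}(A)$ by these scalars.

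Then I would feed the algebra relations of \cite{es zl}, (6.5), into \eqref{eq:Batalin-Vilkovisky} to solve for the scalars. Applying $\Delta$ to the relation $6 X_{3} X_{14} = X_{8} X_{13}$ (recorded just after \eqref{eq:auxx}) and expanding both sides via the Leibniz rule \eqref{property 2}, using $[X_{3}, X_{14}]$ from Proposition \ref{prop:bracket-H0-Xi}, $[X_{8}, X_{13}]$ from Proposition \ref{prop:X_8}, and $\Delta(X_{3}) = \Delta(X_{14}) = 0$, forces $\Delta(X_{8}) = 8 \cdot 1$ and, comparing coefficients in a basis of $\HH^{3}(A)$, $\Delta(X_{13}) = 4(X_{9} + X_{10} + X_{11})$. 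Likewise $[X_{9}, X_{9}] = 0$ (Proposition \ref{prop:remaining brackets}) gives $\Delta(X_{9}^{2}) = 0$.

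Finally I would derive the contradiction from a second relation. Applying $\Delta$ to the identity $X_{9} X_{13} + 6 X_{4} X_{14} = X_{8} X_{9}^{2}$ of \eqref{eq:auxx} and evaluating each side through \eqref{eq:Batalin-Vilkovisky} with the brackets $[X_{9}, X_{13}]$, $[X_{4}, X_{14}]$ (Proposition \ref{prop:X4}) and $[X_{8}, X_{9}^{2}]$ (Proposition \ref{prop:X_8}) and the values already found, I obtain an equation in $H^{4}_{0}$ of the shape $12 X_{9} X_{10} + 4 X_{9} X_{11} + 6 p_{4} X_{1} X_{14} + 6 q_{4} X_{2} X_{14} = 4 X_{9}^{2}$, where $p_{4}, q_{4}$ are the two free scalars in $\Delta(X_{4})$. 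Since $p_{4}, q_{4}$ only affect the $X_{1} X_{14}$ and $X_{2} X_{14}$ coefficients, I would reduce the remaining quadratic monomials (in particular $X_{9} X_{11}$) to the basis of $H^{4}_{0}$ using \cite{es zl}, (6.5)--(6.8), and check that the image of the equation in $H^{4}_{0}/\langle X_{1} X_{14}, X_{2} X_{14}\rangle$ cannot be satisfied, giving the contradiction. I expect this last step to be the main obstacle: it demands careful bookkeeping with the defining relations and the linear-independence statements of \cite{es zl} to show that the reduced linear system has no solution, exactly in the spirit of the computations in Propositions \ref{prop:X4} and \ref{prop:remaining brackets}.
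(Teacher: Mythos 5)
Your overall strategy (assume $\Delta$ exists, constrain it on the fourteen generators, feed in the multiplicative relations of \cite{es zl}, (6.5), and derive a contradiction) is the same as the paper's, and your opening moves are sound and even a bit slicker than the paper's: the reduction to an internal-degree-$0$ component $\Delta_{0}$ via Corollary \ref{cor:gr Hom} is legitimate, and the bidegree vanishing $\Delta(X_{i})=0$ for $i\in\{1,2,3,9,10,11,12,14\}$ checks out against Proposition \ref{H_{-1}} (e.g. $\Delta(X_{9})\in H^{1}_{-1}\cong\omega^{*}_{1}H^{-3}_{1}=0$). But the proof is not complete, and the gap is exactly where you flag it. First, your determination $\Delta(X_{8})=8\cdot 1$ and $\Delta(X_{13})=4(X_{9}+X_{10}+X_{11})$ from $6X_{3}X_{14}=X_{8}X_{13}$ rests on two unestablished facts: that $X_{13}\notin X_{8}\cdot H^{2}_{0}$ and that multiplication by $X_{8}$ is injective on $H^{2}_{0}$ (your equation only yields $(c_{8}-8)X_{13}=X_{8}\bigl(\Delta(X_{13})-4(X_{9}+X_{10}+X_{11})\bigr)$, which alone determines nothing). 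The paper records injectivity of $\mu_{X_{8}}$ on $H^{4}_{0}$ and $H^{6}_{-2}$ in the proof of Proposition \ref{prop:remaining brackets}, but not on $H^{2}_{0}\oplus\Bbbk X_{13}$. Second, the decisive contradiction --- that $12X_{9}X_{10}+4X_{9}X_{11}+6p_{4}X_{1}X_{14}+6q_{4}X_{2}X_{14}=4X_{9}^{2}$ has no solution $(p_{4},q_{4})$ --- is never verified: it requires reducing $X_{9}X_{11}$ to the stated basis of $H^{4}_{0}$ using the explicit relations of \cite{es zl}, (6.5), which you do not carry out, and nothing in the data you cite rules out that the reduction conspires to make the system solvable. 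A proof by contradiction cannot end with ``I expect the system to be inconsistent.''

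There is also a much cheaper closing move inside your own framework that you missed. Evaluating \eqref{eq:Batalin-Vilkovisky} on the pair $(X_{8},X_{3})$, using $X_{8}X_{3}=0$ (first two lines of \cite{es zl}, (6.5)), $\Delta(X_{3})=0$ and $[X_{8},X_{3}]=-4X_{3}$ from Proposition \ref{prop:bracket-H0-Xi}, gives $\Delta(X_{8})X_{3}=-4X_{3}$; with your internal-degree reduction $\Delta(X_{8})\in\Bbbk\cdot 1$, this forces $\Delta(X_{8})=-4\cdot 1$, which collides head-on with your $\Delta(X_{8})=8\cdot 1$ --- so if your step-three independence claims were justified, you could stop there without ever touching the degree-$4$ system. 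The paper's actual proof avoids all of the unverified independence statements: it pins the scalar part of $\Delta(X_{8})$ to $-4$ exactly as above, shows $\Delta(X_{i})\in\operatorname{span}_{\Bbbk}\langle X_{1},X_{2},X_{3}\rangle$ for $i\in\llbracket 4,7\rrbracket$ from $[X_{i},X_{j}]=0$ and $X_{i}X_{j}=0$, and then uses the single relation $X_{3}X_{9}=X_{4}X_{8}$ together with $[X_{3},X_{9}]=-2X_{4}$ and $[X_{4},X_{8}]=2X_{4}$ to obtain $\Delta(X_{4})X_{8}+4X_{4}=0$, contradicting the linear independence of $X_{1}X_{8}$, $X_{2}X_{8}$ and $X_{4}$ in $\HH^{1}(A)$, a fact explicitly on record in \cite{es zl}, (6.7). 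I recommend you either complete your route by proving the two independence facts and performing the $H^{4}_{0}$ reduction, or switch to this shorter argument.
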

\begin{proof}
Assume that \eqref{eq:Batalin-Vilkovisky} holds. 
Obviously, $\Delta (\HH^0(A))=0$. 
Applying the results in Table \ref{table:Gerstenhaber brackets} and \eqref{eq:Batalin-Vilkovisky}, 
we get 
$-4X_3=[X_8,X_3]=\Delta (X_8)X_3$, 
and 
$0=[X_i,X_j]=\Delta(X_i)X_j$ for $i\in \llbracket 4,7 \rrbracket $ and $j \in  \llbracket 1,3 \rrbracket$, since $X_{8} X_{3} = X_i X_j = 0$ in that case (see the first two lines of \cite{es zl}, (6.5)). 
Hence, 
$\Delta (X_8)\in -4+\operatorname{span}_{\Bbbk}\langle X_1,X_2,X_3\rangle$ and $\Delta(X_i)\in \operatorname{span}_{\Bbbk}\langle X_1,X_2,X_3\rangle $ for $i\in  \llbracket 4,7 \rrbracket $, where $\operatorname{span}_{\Bbbk}\langle X_1,X_2,X_3\rangle $ is the $\Bbbk$-subspace spanned by $X_1,X_2,X_3$.  
Moreover, 
\begin{equation}
\label{eq:eqq}
\begin{split}
    -2X_4 & =[X_3,X_9]=\Delta (X_3X_9)-X_3\Delta (X_9)=\Delta (X_3X_9), 
    \\
2X_4 & =[X_4,X_8]=-\Delta(X_4X_8)+\Delta(X_4)X_8-X_4\Delta (X_8)=-\Delta(X_4X_8)+\Delta(X_4)X_8+4X_4,
\end{split}
\end{equation}  
where we used that $X_{4} X_{i} = X_{3} X_{k} = 0$ for $i \in \llbracket 1 , 3 \rrbracket$ and $k \in \llbracket 4,8 \rrbracket$, by the first two lines of \cite{es zl}, (6.5). 
Since $X_3X_9=X_4X_8\in \HH^2(A)$ (see the penultimate element of the third line of \cite{es zl}, (6.5)), adding the equations \eqref{eq:eqq}, we obtain $\Delta(X_4)X_8+4X_4=0$. 
The identity $\Delta(X_4)=k_1X_1+k_2X_2+k_3X_3$ for $k_1,k_2,k_3\in \Bbbk$, which we proved before, implies that $k_1X_1X_8+k_2X_2X_8+4X_4=0$. 
This is impossible since the elements $X_1X_8,X_2X_8$ and $X_4$ are linearly independent in $\HH^1(A)$ (see \cite{es zl}, (6.7)). 
The proposition thus follows. 
\end{proof}

\bibliographystyle{model1-num-names}
\addcontentsline{toc}{section}{References}
  
\begin{bibdiv}
\begin{biblist}

\bib{FK99}{article}{
		author={Fomin, Sergey},
		author={Kirillov, Anatol N.},
		title={Quadratic algebras, Dunkl elements, and Schubert calculus},
		conference={
		   title={Advances in geometry},
		}, 
		book={
		   series={Progr. Math.},
		   volume={172},
		   publisher={Birkh\"{a}user Boston, Boston, MA},
		}, 
		date={1999},
		pages={147--182},
	   review={\MR{1667680}},
	}

\bib{MR161898}{article}{
   author={Gerstenhaber, Murray},
   title={The cohomology structure of an associative ring},
   journal={Ann. of Math. (2)},
   volume={78},
   date={1963},
   pages={267--288},
   issn={0003-486X},
   review={\MR{161898}},
   doi={10.2307/1970343},
}

\bib{es zl}{article}{
		author={Herscovich, Estanislao},
		author={Li, Ziling},
		title={Hochschild and cyclic (co)homology of the Fomin–Kirillov algebra on $3$ generators},
	    eprint={https://arxiv.org/abs/2112.01201},
	}

\bib{MS00}{article}{
		author={Milinski, Alexander},
		author={Schneider, Hans-J\"{u}rgen},
		title={Pointed indecomposable Hopf algebras over Coxeter groups},
		conference={
		   title={New trends in Hopf algebra theory},
		   address={La Falda},
		   date={1999},
		}, 
		book={
		   series={Contemp. Math.},
		   volume={267},
		   publisher={Amer. Math. Soc., Providence, RI},
		},
		date={2000},
		pages={215--236},
	   review={\MR{1800714}},
	   doi={10.1090/conm/267/04272},
	}

\bib{MR2046303}{book}{
   author={N\u{a}st\u{a}sescu, Constantin},
   author={Van Oystaeyen, Freddy},
   title={Methods of graded rings},
   series={Lecture Notes in Mathematics},
   volume={1836},
   publisher={Springer-Verlag, Berlin},
   date={2004},
   pages={xiv+304},
   isbn={3-540-20746-5},
   review={\MR{2046303}},
   doi={10.1007/b94904},
}

\bib{MR3498646}{article}{
   author={Negron, Cris},
   author={Witherspoon, Sarah},
   title={An alternate approach to the Lie bracket on Hochschild cohomology},
   journal={Homology Homotopy Appl.},
   volume={18},
   date={2016},
   number={1},
   pages={265--285},
   issn={1532-0073},
   review={\MR{3498646}},
   doi={10.4310/HHA.2016.v18.n1.a14},
}

	\bib{SV16}{article}{
		author={\c{S}tefan, Drago\c{s}},
		author={Vay, Cristian},
		title={The cohomology ring of the 12-dimensional Fomin-Kirillov algebra},
		journal={Adv. Math.},
		volume={291},
		date={2016},
		pages={584--620},
		issn={0001-8708},
		review={\MR{3459024}},
		doi={10.1016/j.aim.2016.01.001},
	}

\bib{Mariano}{article}{
   author={Su\'{a}rez-\'{A}lvarez, Mariano},
   title={A little bit of extra functoriality for Ext and the computation of
   the Gerstenhaber bracket},
   journal={J. Pure Appl. Algebra},
   volume={221},
   date={2017},
   number={8},
   pages={1981--1998},
   issn={0022-4049},
   review={\MR{3623179}},
   doi={10.1016/j.jpaa.2016.10.015},
}

\bib{MR3974969}{article}{
   author={Volkov, Yury},
   title={Gerstenhaber bracket on the Hochschild cohomology via an arbitrary
   resolution},
   journal={Proc. Edinb. Math. Soc. (2)},
   volume={62},
   date={2019},
   number={3},
   pages={817--836},
   issn={0013-0915},
   review={\MR{3974969}},
   doi={10.1017/s0013091518000901},
}

\bib{Sarah}{book}{
    author={Witherspoon, Sarah J.},
    title={Hochschild cohomology for algebras},
    series={Graduate Studies in Mathematics},
    volume={204},
    publisher={American Mathematical Society, Providence, RI},
    date={[2019] \copyright 2019},
    pages={xi+250},
    isbn={978-1-4704-4931-5},
    review={\MR{3971234}},
    doi={10.1090/gsm/204},

}

\end{biblist}
\end{bibdiv}

\end{document}